\newtheorem{theorem}{Theorem}[section]
\newtheorem{lemma}[theorem]{Lemma}
\newtheorem{corollary}[theorem]{Corollary}
\theoremstyle{definition}
\newtheorem{definition}[theorem]{Definition}
\newtheorem{example}[theorem]{Example}
\newtheorem{remark}[theorem]{Remark}
\numberwithin{equation}{section}
\newcommand{\abs}[1]{\lvert#1\rvert}
\DeclareMathOperator{\BMO}{\mathrm{BMO}}
\DeclareMathOperator{\PBMO}{\mathrm{PBMO}}
\DeclareMathOperator{\dive}{div}
\providecommand{\abs}[1]{ \lvert#1  \rvert}
\providecommand{\norm}[1]{ \lVert#1  \rVert}
\newcommand{\dx}{\, d x}
\newcommand{\dy}{\, d y}
\newcommand{\dt}{\, d t}
\newcommand{\ds}{\, d s}
\newcommand{\dla}{\, d \lambda}
\newcommand{\lp}{\left (}
\newcommand{\rp}{\right )}
\def\Xint#1{\mathchoice
   {\XXint\displaystyle\textstyle{#1}}%
   {\XXint\textstyle\scriptstyle{#1}}%
   {\XXint\scriptstyle\scriptscriptstyle{#1}}%
   {\XXint\scriptscriptstyle\scriptscriptstyle{#1}}%
   \!\int}
\def\XXint#1#2#3{{\setbox0=\hbox{$#1{#2#3}{\int}$}
     \vcenter{\hbox{$#2#3$}}\kern-.5\wd0}}
\def\dashint{\Xint-}
\newcommand{\citecomment}[2][]{\citen{#2}#1\citevar}
\newcommand{\citeone}[1]{\citecomment{#1}}
\newcommand{\citetwo}[2][]{\citecomment[,~#1]{#2}}
\newcommand{\citevar}{\@ifnextchar\bgroup{;~\citeone}{\@ifnextchar[{;~\citetwo}{]}}}
\newcommand{\citefirst}{\@ifnextchar\bgroup{\citeone}{\@ifnextchar[{\citetwo}{]}}}
\begin{document}

\title{John--Nirenberg inequalities for parabolic BMO}

\author{Juha Kinnunen}
\address{Department of Mathematics, Aalto University, P.O. Box 11100, FI-00076 Aalto, Finland}
\email{juha.k.kinnunen@aalto.fi}
\thanks{The research was supported by the Academy of Finland.}

\author{Kim Myyryl\"ainen}
\address{Department of Mathematics, Aalto University, P.O. Box 11100, FI-00076 Aalto, Finland}
\email{kim.myyrylainen@aalto.fi}

\author{Dachun Yang}
\address{Laboratory of Mathematics and Complex Systems (Ministry of Education of China), 
School of Mathematical Sciences, Beijing Normal University, Beijing 100875, People's Republic of China}
\email{dcyang@bnu.edu.cn}
\subjclass[2020]{42B35, 42B37}

\keywords{Parabolic BMO, John--Nirenberg inequality, median, doubly nonlinear equation}

\begin{abstract}
We discuss a parabolic version of the space of functions of bounded mean oscillation related to a doubly nonlinear parabolic partial differential equation.
Parabolic John--Nirenberg inequalities, which give exponential decay estimates for the oscillation of a function, are shown in the natural geometry of the partial differential equation.
Chaining arguments are applied to change the time lag in the parabolic John--Nirenberg inequality.
We also show that the quasihyperbolic boundary condition is a necessary and sufficient condition for a global parabolic John--Nirenberg inequality.
Moreover, we consider John--Nirenberg inequalities with medians instead of integral averages and show that this approach gives the same class of functions as the original definition.
\end{abstract}

\maketitle

\section{Introduction}
Functions of bounded mean oscillation ($\BMO$) are essential in harmonic analysis and partial differential equations.
A particularly useful result is the John--Nirenberg lemma which gives an exponential decay estimate for the mean oscillation of a function in $\BMO$.
Functions of bounded mean oscillation and the John--Nirenberg lemma were first discussed in~\cite{john_original} and the corresponding time-dependent theory was initiated by Moser in~\cite{moser1964,moser1967}.
The proof of the parabolic John--Nirenberg lemma requires genuinely new ideas compared to the time independent case.
The main challenge is that the definition of parabolic $\BMO$ consists of two conditions on the mean oscillation of a function, one in the past and the other one in the future with a time lag between the estimates, see Definition~\ref{def.pbmo} below. 
Moreover, Euclidean cubes are replaced by rectangles that respect the natural geometry of the related parabolic partial differential equation.
Fabes and Garofalo~\cite{fabesgarofalo} gave a simpler proof for the parabolic John--Nirenberg lemma and the general approach of Aimar~\cite{aimar} applies in spaces of homogeneous type.
Mart\'{\i}n-Reyes and de la Torre~\cite{martin1994} studied one-sided $\BMO$ in the one-dimensional case.
Berkovits~\cite{berkovits2011,berkovits2012} discussed this approach in the higher dimensional case, but the geometry is not related to nonlinear parabolic partial differential equations.
In contrast with the extensive literature on the classical $\BMO$, there are few references to the corresponding parabolic theory. 

We discuss a parabolic $\BMO$ space tailored to a doubly nonlinear equation
\begin{equation}\label{eq.dnle}
\frac{\partial}{\partial t}(\lvert u\rvert^{p-2}u)-\dive(\lvert Du\rvert^{p-2}Du)=0,
\qquad 1<p<\infty.
\end{equation}
For $p=2$ we have the standard heat equation. Gradient and divergence are taken with respect to the spatial variable only. 
Observe that a solution to \eqref{eq.dnle} can be scaled, but constants cannot be added to a solution. 
The equation is nonlinear in the sense that the sum of two solutions is not a solution in general.
Our discussion applies to more general equations of the type
\[
\frac{\partial}{\partial t}(\lvert u\rvert^{p-2}u)-\dive A(x,t,u,Du)=0,
\]
where $A$ is a Caratheodory function 
that satisfies the structural conditions 
\[
A(x,t,u,Du) \cdot Du \ge C_0\lvert Du\rvert^p
\quad\text{and}\quad
\lvert A(x,t,u,Du)\rvert \le C_1\lvert Du\rvert^{p-1}
\]
for some positive constants $C_0$ and $C_1$. 
In the natural geometry of \eqref{eq.dnle}, we consider space-time rectangles 
where the time variable scales to the power $p$.
This is in accordance with the following scaling property: if $u(x,t)$ is a solution, so does $u(\alpha x,\alpha^p t)$ with $\alpha>0$.
Different values of $p$ lead to different $p$-geometries and different parabolic $\BMO$ spaces. 
For recent regularity results for the doubly nonlinear equation, we refer to B\"{o}gelein, Duzaar, Kinnunen and Scheven~\cite{bogelein2021b}, 
B\"{o}gelein, Duzaar and Liao~\cite{bogelein2021a},
Kuusi, Siljander and Urbano~\cite{kuusi2012a} and Kuusi, Laleoglu, Siljander and Urbano~\cite{kuusi2012b}.

The following scale and location invariant Harnack inequality for positive solutions to \eqref{eq.dnle} has been obtained by 
Moser~\cite{moser1964} for $p=2$ and by Trudinger~\cite{trudinger} for $1<p<\infty$.
See also Gianazza and Vespri~\cite{gianazzavespri} and Kinnunen and Kuusi~\cite{kinnunenkuusi}.
Assume that $u>0$ is a weak solution to \eqref{eq.dnle} in $\Omega_T=\Omega\times(0,T)$ and let $0<\gamma<1$ be a time lag.
There exist an exponent $\delta=\delta(n,p,\gamma)>0$  and constants $c_i=c_i(n,p,\gamma)>0$, $i=1,2,3$, such that
\begin{equation}
\label{eq.harnackproof}
\sup_{R^-(\gamma)} u  
\le c_1\lp\dashint_{2R^-(\gamma)} u^\delta\rp^{\frac1{\delta}}
\le c_2\lp\dashint_{2R^+(\gamma)} u^{-\delta}\rp^{-\frac1{\delta}}
\le c_3\inf_{R^+(\gamma)}u
\end{equation}
for every parabolic rectangle with $2R\subset\Omega_T$. 
See Definition~\ref{def_parrect} for the parabolic rectangles.
Harnack's inequality gives a scale and location invariant pointwise bound for a positive solution at a given time in terms of its values at later times.
This indicates that the parabolic rectangles in the $p$-geometry respect the natural geometry of the doubly nonlinear equation.
The time lag $\gamma>0$ is an unavoidable feature of the theory rather than a mere technicality. 
The fact that the result is not true with $\gamma=0$ can be seen from the heat kernel already when $p=2$, 
since Harnack's inequality does not hold on a given time slice.
The first and the last inequalities in \eqref{eq.harnackproof} are based on a successive application of Sobolev's inequality and energy estimates.
The remaining inequality follows from  the fact that a logarithm of a positive weak solution belongs to the parabolic $\BMO$ with a uniform estimate and a parabolic John--Nirenberg lemma as in \cite{moser1964} and \cite{trudinger}.
The proof in \cite{kinnunenkuusi} applies an abstract lemma of Bombieri instead of the parabolic John--Nirenberg lemma.
See also Kinnunen and Saari~\cite{kinnunenSaariMuckenhoupt} and Saari~\cite{localtoglobal}. 
The parabolic John--Nirenberg lemma with a general parameter $p$ has applications in the theory of parabolic Muckenhoupt weights 
in Kinnunen and Saari~\cite{kinnunenSaariMuckenhoupt, kinnunenSaariParabolicWeighted}.

We discuss several versions of the parabolic John--Nirenberg inequality in the $p$-geometry with $1<p<\infty$.
Theorem~\ref{reshetnyak} gives an exponential bound for the mean oscillation in terms of integral averages.
To our knowledge this result is new already for $p=2$ and generalizes the corresponding result for the standard $\BMO$ in \cite{john_original} to the parabolic case.
A more common version of the parabolic John--Nirenberg inequality is stated in Corollary~\ref{local_pJN}.
This generalizes the results of Moser~\cite{moser1964,moser1967} and Fabes and Garofalo~\cite{fabesgarofalo} to the $p$-geometry with $1<p<\infty$.
A general approach of Aimar~\cite{aimar} applies in metric measure spaces and also covers the $p$-geometry with $1<p<\infty$ by considering the parabolic metric 
\[
d((x,t),(y,s))=\max\{\|x-y\|_\infty,\lvert t-s\rvert^{\frac1p}\}.
\]
We prefer giving a direct and transparent proof in the $p$-geometry that also allows further investigation of the theory of parabolic $\BMO$.
The argument is based on a Calder\'on--Zygmund decomposition in the $p$-geometry.
The John--Nirenberg inequality implies that a parabolic $\BMO$ function is locally integrable to any positive power with reverse H\"older-type bounds, see Corollary~\ref{reverseHolder}.
Corollary~\ref{exp.int_upper} gives a stronger result which states that a parabolic $\BMO$ function is locally exponentially integrable with uniform estimates on parabolic rectangles.
A chaining argument in the proof of Theorem~\ref{global_pJN} shows that the size of the time lag can be changed in parabolic $\BMO$.
Parabolic chaining arguments have been previously studied by Saari~\cite{localtoglobal,forward-in-time} and our approach complements these techniques.
We also discuss the John--Nirenberg inequality up to the spatial boundary of a space-time cylinder by applying results of Saari~\cite{localtoglobal} and Smith and Stegenga~\cite{smithstegenga}.
In particular, we show that the quasihyperbolic boundary condition is a necessary and sufficient condition for a global parabolic John--Nirenberg inequality.
This extends some of the results in \cite{smithstegenga} to the parabolic setting.

John~\cite{johnmedian} observed that it is possible to relax the a priori local integrability assumption in the definition of $\BMO$
and to create a theory that applies to measurable functions. 
We extend this theory to the parabolic context.
This approach is based on the notion of median, for example, see Jawerth and Torchinsky~\cite{jawerth_torchinsky},  Poelhuis and Torchinsky~\cite{medians} and Str\"omberg~\cite{stromberg}.
It is remarkable that the John--Nirenberg inequality can be proved starting from this condition and as a consequence, these functions
are locally integrable to any positive power, see Theorem~\ref{local_pJN_m}.
Corollary~\ref{PBMOequivalent_m} shows that the parabolic $\BMO$ with medians coincides with the original definition of parabolic $\BMO$.

\section{Definition and properties of parabolic $\BMO$} 

The underlying space throughout is $\mathbb{R}^{n+1}=\{(x,t):x=(x_1,\dots,x_n)\in\mathbb R^n,t\in\mathbb R\}$.
Unless otherwise stated, constants are positive and the dependencies on parameters are indicated in the brackets.
The Lebesgue measure of a measurable subset $A$ of $\mathbb{R}^{n+1}$ is denoted by $\lvert A\rvert$.
A cube $Q$ is a bounded interval in $\mathbb R^n$, with sides parallel to the coordinate axes and equally long, that is,
$Q=Q(x,L)=\{y \in \mathbb R^n: \lvert y_i-x_i\rvert \leq L,\,i=1,\dots,n\}$
with $x\in\mathbb R^n$ and $L>0$. 
The point $x$ is the center of the cube and $L$ is the side length of the cube. 
Instead of Euclidean cubes, we work with the following collection of parabolic rectangles in $\mathbb{R}^{n+1}$.

\begin{definition}\label{def_parrect}
Let $1<p<\infty$, $x\in\mathbb R^n$, $L>0$ and $t \in \mathbb{R}$.
A parabolic rectangle centered at $(x,t)$ with side length $L$ is
\[
R = R(x,t,L) = Q(x,L) \times (t-L^p, t+L^p)
\]
and its upper and lower parts are
\[
R^+(\gamma) = Q(x,L) \times (t+\gamma L^p, t+L^p) 
\quad\text{and}\quad
R^-(\gamma) = Q(x,L) \times (t - L^p, t - \gamma L^p) ,
\]
where $-1 < \gamma < 1$ is called the time lag.
\end{definition}

Note that $R^-(\gamma)$ is the reflection of $R^+(\gamma)$ with respect to the time slice $\mathbb{R}^n \times \{t\}$.
The spatial side length of a parabolic rectangle $R$ is denoted by $l_x(R)=L$ and the time length by $l_t(R)=2L^p$.
For short, we write $R^\pm$ for $R^{\pm}(0)$.
The top of a rectangle $R = R(x,t,L)$ is $Q(x,L) \times\{t+L^p\}$
and the bottom is $Q(x,L) \times\{t-L^p\}$.
The $\lambda$-dilate of $R$  with $\lambda>0$ is denoted by $\lambda R = R(x,t,\lambda L)$.
We observe that the Lebesgue differentiation theorem holds on the collection of parabolic rectangles.

\begin{definition}
\label{conv_reg}
A sequence $(A_i)_{i\in\mathbb N}$ of measurable sets $A_i\subset\mathbb R^{n+1}$, $i\in\mathbb{N}$, converges regularly to a point $(x,t) \in \mathbb{R}^{n+1}$, if there exist a constant $c>0$ and a sequence  $(R_i)_{i\in\mathbb N}$ of parabolic rectangles $R_i$, $i\in\mathbb{N}$, such that  $\lvert R_i \rvert \to 0$ as $i\to\infty$, $A_i \subset R_i$, $(x,t) \in R_i$ and $\lvert A_i \rvert \leq \lvert R_i \rvert \leq c \lvert A_i \rvert$ for every $i \in\mathbb{N}$.
\end{definition}

The integral average of $f \in L^1(A)$ in measurable set $A\subset\mathbb{R}^{n+1}$, with $0<|A|<\infty$, is denoted by
\[
f_A = \dashint_A f \dx \dt = \frac{1}{\lvert A\rvert} \int_A f(x,t)\dx\dt .
\]
The Lebesgue differentiation theorem below can be proven in a similar way as in the classical case using a covering argument for parabolic rectangles.

\begin{lemma}
\label{LDT}
Let $f \in L^1_{\mathrm{loc}}(\mathbb{R}^{n+1})$. Then 
\[
\lim_{i \to \infty}  \dashint_{A_i} \lvert f(y,s) - f(x,t) \rvert \dy\ds = 0
\]
for almost every $(x,t) \in \mathbb{R}^{n+1}$, whenever $(A_i)_{i\in\mathbb N}$ is a sequence of measurable sets converging regularly to $(x,t)$.
\end{lemma}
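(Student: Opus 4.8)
The plan is to run the classical three-step argument, adapted to the $p$-geometry: first prove a Vitali-type covering lemma for parabolic rectangles, then deduce a weak-type $(1,1)$ bound for the associated maximal operator, and finally combine it with the density of continuous functions in $L^1_{\mathrm{loc}}$. The regular-convergence hypothesis is precisely what lets the general sets $A_i$ be replaced by the parabolic rectangles $R_i$ in all estimates, since $A_i \subset R_i$, $(x,t) \in R_i$, and $\abs{A_i} \le \abs{R_i} \le c\abs{A_i}$ together give $\dashint_{A_i} \abs{h}\dy\ds \le c\dashint_{R_i}\abs{h}\dy\ds$ for any $h \in L^1_{\mathrm{loc}}$, while $\abs{R_i}\to 0$ forces $l_x(R_i)\to 0$ so that $R_i$ shrinks to $(x,t)$.

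The key geometric fact I would record first is that parabolic rectangles form a genuine differentiation basis: if $R_1=R(x_1,t_1,L_1)$ and $R_2=R(x_2,t_2,L_2)$ intersect with $L_1\le L_2$, then $R_1\subset 3R_2$. Indeed, intersecting spatial cubes give $Q(x_1,L_1)\subset Q(x_2,3L_2)$, and for the time intervals $(t_1-L_1^p,t_1+L_1^p)\subset(t_2-3L_2^p,t_2+3L_2^p)\subset(t_2-(3L_2)^p,t_2+(3L_2)^p)$, the last inclusion using $3L_2^p\le(3L_2)^p$, which holds since $p>1$. Because $\abs{3R}=3^{n+p}\abs{R}$, the standard $5r$-type covering argument (with dilation constant $3$) applies: every finite family of parabolic rectangles contains a pairwise disjoint subfamily whose $3$-dilates cover the union.

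With this, for $f\in L^1_{\mathrm{loc}}(\mathbb R^{n+1})$ I would introduce the parabolic maximal function $Mf(x,t)=\sup\dashint_R\abs{f}\dy\ds$, the supremum over parabolic rectangles $R\ni(x,t)$, and obtain, after localizing to a bounded region, the weak-type estimate $\abs{\{Mf>\lambda\}}\le C(n,p)\lambda^{-1}\norm{f}_{L^1}$ by applying the covering lemma to rectangles with $\dashint_R\abs{f}>\lambda$. Then, fixing a bounded region and splitting $f=g+h$ with $g$ continuous and $\norm{h}_{L^1}$ small, for a sequence $(A_i)$ converging regularly to $(x,t)$ with associated rectangles $(R_i)$ one has
\[
\dashint_{A_i}\abs{f(y,s)-f(x,t)}\dy\ds \le \dashint_{A_i}\abs{g(y,s)-g(x,t)}\dy\ds + c\,\dashint_{R_i}\abs{h}\dy\ds + \abs{h(x,t)}.
\]
The first term tends to $0$ by uniform continuity of $g$ and $l_x(R_i)\to 0$, and $\dashint_{R_i}\abs{h}\dy\ds\le Mh(x,t)$, so the $\limsup$ over all regularly converging sequences is at most $c\,Mh(x,t)+\abs{h(x,t)}$. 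Its superlevel set at height $\lambda$ therefore has outer measure at most $C'(n,p)\lambda^{-1}\norm{h}_{L^1}$; letting $\norm{h}_{L^1}\to 0$ shows this set is null for every $\lambda>0$, which avoids any measurability discussion and yields the claim.

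The main obstacle is the second step, namely verifying that the anisotropic parabolic rectangles really constitute a differentiation basis, i.e. the covering lemma together with the weak-type $(1,1)$ inequality; everything after that is a routine transcription of the Euclidean proof. The crucial point is the elementary inequality $3L^p\le(3L)^p$ for $p>1$, which is exactly what allows an intersecting pair of parabolic rectangles to be absorbed into a bounded dilate of the larger one despite the time variable scaling to the power $p$. (Alternatively, the statement is covered by the general metric-space framework of Aimar~\cite{aimar} applied to the parabolic metric $d((x,t),(y,s))=\max\{\|x-y\|_\infty,\abs{t-s}^{1/p}\}$, but the direct argument above is more transparent and self-contained.)
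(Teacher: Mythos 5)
Your proof is correct and is precisely the detailed version of what the paper only sketches (the paper states that Lemma~\ref{LDT} ``can be proven in a similar way as in the classical case using a covering argument for parabolic rectangles'' and gives no further detail). The one nontrivial geometric point you correctly isolate is the engulfing property $R_1\subset 3R_2$ for intersecting rectangles with $L_1\le L_2$, which hinges on $3L^p\le(3L)^p$ for $p>1$; with that in hand the Vitali covering lemma, the weak-type $(1,1)$ bound for the parabolic maximal operator, the density argument, and the reduction from $A_i$ to $R_i$ via $\lvert A_i\rvert\le\lvert R_i\rvert\le c\lvert A_i\rvert$ all go through exactly as you describe.
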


The positive and the negative parts of a function $f$ are denoted by
\[
f_+ = \max\{ f, 0 \} 
\quad\text{and}\quad
f_- = - \min\{ f , 0 \} .
\]
Let $\Omega \subset \mathbb{R}^{n}$ be an open set and $T>0$.
A space-time cylinder is denoted by $\Omega_T=\Omega\times(0,T)$.
It is possible to consider space-time cylinders $\Omega\times(t_1,t_2)$ with $t_1<t_2$, but we focus on $\Omega_T$.

This section discusses basic properties of parabolic $\BMO$. 
We begin with the definition.
The differentials $\dx \dt$ in integrals are omitted in the sequel.

\begin{definition}
\label{def.pbmo}
Let $\Omega \subset \mathbb{R}^{n}$ be a domain, $T>0$, $ -1 < \gamma < 1$, $ -\gamma \leq \delta < 1$ and $0< q < \infty$.
A function $f \in L_{\mathrm{loc}}^q(\Omega_T)$ belongs to $\PBMO_{\gamma,\delta,q}^{+}(\Omega_T)$ if
\[
\norm{f}_{\PBMO_{\gamma,\delta,q}^{+}(\Omega_T)} 
= \sup_{R \subset \Omega_T} \inf_{c \in \mathbb{R}} \lp \dashint_{R^+(\gamma)} (f-c)_+^q 
+ \dashint_{R^-(\delta)} (f-c)_-^q \rp^\frac{1}{q} < \infty.
\]
If the condition above holds with the time axis reversed, then $f \in \PBMO_{\gamma,\delta,q}^{-}(\Omega_T)$.
\end{definition}

For $0 \leq \delta = \gamma < 1$, we abbreviate $\PBMO_{\gamma,q}^{+}(\Omega_T) = \PBMO_{\gamma,\delta,q}^{+}(\Omega_T)$.
In addition,
we shall write $\PBMO^{+}$ and $\norm{f}$ whenever parameters are clear from the context or are not of importance.
Observe that  $f \in L_{\mathrm{loc}}^q(\Omega_T)$ belongs to PBMO$_{\gamma,\delta,q}^{+}(\Omega_T)$ if and only if
for every parabolic rectangle $R$ there exists a constant $c\in\mathbb R$, that may depend on $R$, with
\[
\dashint_{R^+(\gamma)} (f-c)_+^q\le M
\quad\text{and}\quad
\dashint_{R^-(\delta)} (f-c)_-^q \le M,
\]
where  $M\in\mathbb R$ is a constant that is independent of $R$.

\begin{remark}\label{rem.logbmo}
Assume that $u>0$ is a weak solution to the doubly nonlinear equation in $\Omega_T$ and let $0<\gamma<1$.
By Kinnunen and Saari~\cite{kinnunenSaariMuckenhoupt} and Saari~\cite{localtoglobal}, we have 
\[
f=-\log u\in\PBMO^+_{\gamma,q}(\Omega_T),
\] 
with $q=(p-1)/2$ and
\[
\norm{f}=\|f\|_{\PBMO^+_{\gamma,q}}(\Omega_T)\le c(n,p,\gamma)<\infty.
\]
Observe that $0<q<1$ for $p<3$.
By H\"older's inequality, we may take $q=1$ for $p\ge3$.
Observe that the bound for the $\PBMO$ norm is independent of the solution.
\end{remark}

\begin{example}
Let $1<p<\infty$ and $0<\gamma<1$.
The function
\[
u(x,t)=t^{\frac{-n}{p(p-1)}}
e^{-\frac{p-1}p\left(\frac{|x|^p}{pt}\right)^{\frac1{p-1}}},
\,x\in\mathbb R^n,\, t>0,
\]
is a solution of the doubly nonlinear equation in the upper half space $\mathbb R^n\times(0,\infty)$.
By Remark~\ref{rem.logbmo}, we conclude that the function
\begin{align*}
f(x,t)
&=-\log u(x,t)
=-\log t^{\frac{-n}{p(p-1)}}
-\log e^{-\frac{p-1}p\left(\frac{|x|^p}{pt}\right)^{\frac1{p-1}}}\\
&=\tfrac{n}{p(p-1)}\log t
+\tfrac{p-1}p\left(\tfrac{|x|^p}{pt}\right)^{\frac1{p-1}},
\,x\in\mathbb R^n,\, t>0, 
\end{align*}
belongs to $\PBMO_{\gamma,q}^{+}(\mathbb R^n\times(0,\infty))$ with $q=(p-1)/2$.
Corollary~\ref{PBMOequivalent} below implies that $f$ belongs to $\PBMO_{\gamma,q}^{+}(\mathbb R^n\times(0,\infty))$ for every $0<q<\infty$.
\end{example}

The next lemma shows that for every parabolic rectangle $R$, there exists a constant $c_R$, depending on $R$, for which the infimum in the definition above is attained.
In the sequel, this minimal constant is denoted by $c_R$.

\begin{lemma}
\label{PBMO_constant}
Let $\Omega_T \subset \mathbb{R}^{n+1}$ be a space-time cylinder, $ -1 < \gamma < 1$, $ -\gamma \leq \delta < 1$ and $0< q < \infty$.
Assume that $f\in\PBMO_{\gamma,\delta,q}^{+}(\Omega_T)$.
Then for every parabolic rectangle $R\subset\Omega_T$, there exists a constant $c_R\in\mathbb R$, that may depend on $R$, such that 
\[
\dashint_{R^+(\gamma)} (f-c_R)_+^q  + \dashint_{R^-(\delta)} (f-c_R)_-^q = \inf_{c \in \mathbb{R}} \lp \dashint_{R^+(\gamma)} (f-c)_+^q + \dashint_{R^-(\delta)} (f-c)_-^q \rp .
\]
In particular, it holds that
\[
\sup_{R \subset \Omega_T} \lp \dashint_{R^+(\gamma)} (f-c_R)_+^q \ + \dashint_{R^-(\delta)} (f-c_R)_-^q  \rp^\frac{1}{q} 
= \norm{f}_{\PBMO_{\gamma,\delta,q}^{+}(\Omega_T)} .
\]
\end{lemma}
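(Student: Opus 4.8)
The plan is to prove existence of a minimizing constant by a standard direct-method argument: define the function
\[
\Phi(c) = \dashint_{R^+(\gamma)} (f-c)_+^q + \dashint_{R^-(\delta)} (f-c)_-^q,
\]
and show that $\Phi$ is continuous on $\mathbb{R}$, that $\Phi(c)\to\infty$ as $\abs{c}\to\infty$, and hence that the infimum is attained at some $c_R$ by compactness. The delicate point is that $q$ may be less than $1$, so $x\mapsto x^q$ is not convex and we cannot simply invoke convexity of $\Phi$; we only need continuity and coercivity, which hold regardless of the size of $q$.

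For continuity, I would fix $c_0$ and a bounded neighborhood, note that for $c$ in that neighborhood the integrands $(f-c)_+^q$ and $(f-c)_-^q$ are dominated by $C(1+\abs{f}^q)$, which is integrable on $R^+(\gamma)$ and $R^-(\delta)$ because $f\in L^q_{\mathrm{loc}}(\Omega_T)$ and $R\subset\Omega_T$; pointwise continuity of $c\mapsto (f(y,s)-c)_\pm^q$ together with the dominated convergence theorem then gives continuity of $\Phi$. For coercivity, observe that for $c$ large and positive, on the part of $R^-(\delta)$ where $f\le 0$ (which has positive measure unless $f>0$ a.e.\ there, a case handled symmetrically), $(f-c)_-^q\ge c^q$, so $\Phi(c)\ge \frac{|\{f\le 0\}\cap R^-(\delta)|}{|R^-(\delta)|}c^q\to\infty$; symmetrically $\Phi(c)\to\infty$ as $c\to-\infty$ using $R^+(\gamma)$. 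One must be slightly careful with the degenerate situations where $f$ has a constant sign on one of the two pieces, but in each such case the other term forces blow-up, and in the fully degenerate case $f$ is essentially constant on $R^+(\gamma)\cup R^-(\delta)$ and any minimizing $c$ (the common value) works; in fact since $f\in\PBMO^+$ the quantity $\norm{f}$ is finite, which already rules out $\Phi\equiv\infty$, so $\inf_c\Phi(c)<\infty$ and a minimizing sequence $(c_j)$ is bounded by coercivity, hence has a convergent subsequence whose limit $c_R$ realizes the infimum by continuity.

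Finally, the ``in particular'' statement is immediate: by definition of $\PBMO^+$ norm and the fact that for each $R$ the infimum over $c$ equals $\Phi(c_R)^{1/q}$ after taking $q$-th roots,
\[
\norm{f}_{\PBMO_{\gamma,\delta,q}^{+}(\Omega_T)}
= \sup_{R\subset\Omega_T}\inf_{c\in\mathbb{R}}\Phi(c)^{1/q}
= \sup_{R\subset\Omega_T}\Phi(c_R)^{1/q}
= \sup_{R\subset\Omega_T}\lp \dashint_{R^+(\gamma)}(f-c_R)_+^q + \dashint_{R^-(\delta)}(f-c_R)_-^q\rp^{1/q}.
\]

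I expect the main obstacle to be purely bookkeeping: handling the edge cases in the coercivity argument (one of the two integrals being identically zero for a range of $c$) cleanly, and making sure the domination used for continuity is valid, i.e.\ that $R^+(\gamma)$ and $R^-(\delta)$ are compactly contained in $\Omega_T$ so that $f\in L^q$ there — this follows since the supremum in the definition ranges over rectangles $R$ with $R\subset\Omega_T$. No genuinely hard analysis is involved; the lemma is a soft existence statement.
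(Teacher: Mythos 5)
Your overall strategy --- direct method with a minimizing sequence, boundedness via coercivity, attainment via continuity --- is essentially what the paper does: it too takes a minimizing sequence $(c_i)$, proves it is bounded, extracts a convergent subsequence and passes to the limit. Your continuity argument via domination and pointwise convergence is fine, and the ``in particular'' step is trivial, as you say.

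However, your coercivity argument has a genuine flaw. You get coercivity as $c\to+\infty$ from the bound $(f-c)_-^q\ge c^q$ on $\{f\le 0\}\cap R^-(\delta)$, and handle the case where that set has measure zero by asserting that ``the other term forces blow-up.'' That assertion is false: if $f>0$ a.e.\ on $R^-(\delta)$ and $c\to+\infty$, the other term $\dashint_{R^+(\gamma)}(f-c)_+^q$ \emph{decreases} to zero, not to infinity. Coercivity is still true in this case, but for a different reason: since $f$ is finite a.e., $(f-c)_-^q=(c-f)_+^q\to\infty$ pointwise a.e.\ on $R^-(\delta)$ as $c\to+\infty$, so Fatou's lemma gives $\dashint_{R^-(\delta)}(f-c)_-^q\to\infty$. (Equivalently, replace $\{f\le0\}$ by $\{f\le M\}$ for some $M$ with $|\{f\le M\}\cap R^-(\delta)|>0$; such $M$ exists since $f$ is a.e.\ finite, and on that set $(f-c)_-\ge c-M$.) Your ``fully degenerate'' fallback --- that $f$ is then essentially constant --- is also unjustified; $f>0$ on $R^-(\delta)$ does not constrain $f$ on $R^+(\gamma)$ at all. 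By contrast, the paper avoids the case split entirely: using the pointwise inequality $c_\mp\le (c-f)_\mp+f_\mp$ together with the subadditivity (for $q\le1$) or the inequality $(a+b)^q\le 2^{q-1}(a^q+b^q)$ (for $q>1$), it bounds $|c_i|^q$ directly by $\dashint_{R^+(\gamma)}|f|^q+\dashint_{R^-(\delta)}|f|^q+1$, which is a cleaner and uniform route to boundedness of the minimizing sequence.
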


\begin{proof}
Let $R \subset \Omega_T$ be a parabolic rectangle. 
Consider a sequence $(c_i)_{i\in\mathbb N}$ of real numbers such that
\[
\dashint_{R^+(\gamma)} (f-c_i)_+^q  + \dashint_{R^-(\delta)} (f-c_i)_-^q
 < \inf_{c \in \mathbb{R}} \lp \dashint_{R^+(\gamma)} (f-c)_+^q  + \dashint_{R^-(\delta)} (f-c)_-^q  \rp + \frac{1}{2^i}
\]
for every $i \in \mathbb{N}$. 
Note that
\[
\dashint_{R^+(\gamma)} (f-c_i)_+^q + \dashint_{R^-(\delta)} (f-c_i)_-^q 
 \leq \dashint_{R^+(\gamma)} f_+^q  + \dashint_{R^-(\delta)} f_-^q  + 1 < \infty
\]
for every $i \in \mathbb{N}$, since $f \in L^q(\Omega_T)$.
If $0< q \leq 1$, then it holds that
\begin{align*}
(c_i)^q_- - \dashint_{R^+(\gamma)} f_-^q + (c_i)^q_+ - \dashint_{R^-(\delta)} f_+^q &\leq \dashint_{R^+(\gamma)} (c_i - f)_-^q + \dashint_{R^-(\delta)} (c_i - f)_+^q \\
&= \dashint_{R^+(\gamma)} (f-c_i)_+^q + \dashint_{R^-(\delta)} (f-c_i)_-^q \\
&\leq \dashint_{R^+(\gamma)} f_+^q  + \dashint_{R^-(\delta)} f_-^q  + 1 .
\end{align*}
This implies
\begin{align*}
\lvert c_i \rvert^q &= (c_i)^q_- + (c_i)^q_+ 
\leq \dashint_{R^+(\gamma)} ( f_+^q + f_-^q )  + \dashint_{R^-(\delta)} ( f_-^q + f_+^q )  + 1 \\
&= \dashint_{R^+(\gamma)} \lvert f \rvert^q  + \dashint_{R^-(\delta)} \lvert f \rvert^q  + 1 < \infty
\end{align*}
for every $i \in \mathbb{N}$.
On the other hand, if $1 < q < \infty$, we have
\begin{align*}
2^{1-q} (c_i)^q_- - \dashint_{R^+(\gamma)} f_-^q + 2^{1-q} (c_i)^q_+ - \dashint_{R^-(\delta)} f_+^q &\leq \dashint_{R^+(\gamma)} (c_i - f)_-^q + \dashint_{R^-(\delta)} (c_i - f)_+^q \\
&= \dashint_{R^+(\gamma)} (f-c_i)_+^q + \dashint_{R^-(\delta)} (f-c_i)_-^q \\
&\leq \dashint_{R^+(\gamma)} f_+^q  + \dashint_{R^-(\delta)} f_-^q  + 1 ,
\end{align*}
and thus
\begin{align*}
2^{1-q} \lvert c_i \rvert^q 
&= 2^{1-q} \lp (c_i)^q_- + (c_i)^q_+ \rp 
\leq \dashint_{R^+(\gamma)} ( f_+^q + f_-^q )  + \dashint_{R^-(\delta)} ( f_-^q + f_+^q )  + 1 \\
&= \dashint_{R^+(\gamma)} \lvert f \rvert^q  + \dashint_{R^-(\delta)} \lvert f \rvert^q  + 1 < \infty
\end{align*}
for every $i \in \mathbb{N}$.
This shows that the sequence $(c_i)_{i\in\mathbb N}$ is bounded. Therefore, there exists a subsequence $(c_{i_k})_{k\in\mathbb N} $ that converges to $c_R \in \mathbb{R}$. 
Then $(f-c_{i_k})_\pm$ converges uniformly to $(f-c_R)_\pm$ in $R$ as $k \to \infty$. This implies the convergence in $L^q(R)$, and thus
\begin{align*}
\inf_{c \in \mathbb{R}} \lp \dashint_{R^+(\gamma)} (f-c)_+^q  + \dashint_{R^-(\delta)} (f-c)_-^q \rp 
&= \lim_{k \to \infty} \lp \dashint_{R^+(\gamma)} (f-c_{i_k})_+^q + \dashint_{R^-(\delta)} (f-c_{i_k})_-^q  \rp \\
& = \dashint_{R^+(\gamma)} (f-c_R)_+^q + \dashint_{R^-(\delta)} (f-c_R)_-^q .
\end{align*}
This completes the proof.
\end{proof}

We list some properties of parabolic $\BMO$ below.
In particular, parabolic $\BMO$ is closed under addition and scaling by a positive constant. 
On the other hand, multiplication by negative constants reverses the time direction.

\begin{lemma}
\label{props_PBMO}
Let $ -1 < \gamma < 1$, $ -\gamma \leq \delta < 1$ and $0< q < \infty$.
Assume that $f$ and $ g$ belong to $\PBMO_{\gamma,\delta,q}^{+}(\Omega_T)$ and
let $\PBMO^{\pm}=\PBMO_{\gamma,\delta,q}^{\pm}(\Omega_T)$.
Then the following properties hold true.
\begin{enumerate}[(i), parsep=5pt, topsep=5pt]
    \item $\begin{aligned}[t]
        \norm{f+a}_{\PBMO^{+}} = \norm{f}_{\PBMO^{+}},\, a \in \mathbb{R}.
    \end{aligned}$
    \item $\begin{aligned}[t]
        \norm{f+g}_{\PBMO^{+}} \leq \max\{ 2^{\frac{1}{q}-1} , 2^{1-\frac{1}{q}} \} \lp \norm{f}_{\PBMO^{+}} + \norm{g}_{\PBMO^{+}} \rp.
    \end{aligned}$
    \item $\begin{aligned}[t]
        \norm{af}_{\PBMO^{+}} = \begin{cases} 
    \displaystyle
      a \norm{f}_{\PBMO^{+}}, &a \geq 0, \\
      \displaystyle
      -a \norm{f}_{\PBMO^{-}}, & a < 0.
   \end{cases}
    \end{aligned}$
    \item $\begin{aligned}
        \norm{\max\{f,g\}}_{\PBMO^{+}} &\leq \max\{1, 2^{\frac{1}{q} -1 }\} \lp \norm{f}_{\PBMO^{+}} + \norm{g}_{\PBMO^{+}} \rp, 
        \\
        \norm{\min\{f,g\}}_{\PBMO^{+}} &\leq \max\{1, 2^{\frac{1}{q} -1 }\} \lp \norm{f}_{\PBMO^{+}} + \norm{g}_{\PBMO^{+}} \rp.
    \end{aligned}$
\end{enumerate}
\end{lemma}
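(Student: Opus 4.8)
The plan is to verify (i)--(iv) one parabolic rectangle at a time: for a fixed $R\subset\Omega_T$ I would estimate the inner infimum in Definition~\ref{def.pbmo} and only afterwards pass to the supremum over $R$. Two elementary scalar inequalities do all the work: for $a,b\ge0$ and any $s>0$,
\[
(a+b)^s\le\max\{1,2^{s-1}\}\,(a^s+b^s)
\]
(subadditivity of $x\mapsto x^s$ for $s\le1$, convexity for $s\ge1$), applied once with $s=q$ inside the averages and once with $s=1/q$ after taking a $q$-th root; together with the pointwise bounds $(a+b)_\pm\le a_\pm+b_\pm$, $\max\{a,b\}_+=\max\{a_+,b_+\}$, and $\max\{x,y\}^q\le x^q+y^q$ for $x,y\ge0$. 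Throughout I would invoke Lemma~\ref{PBMO_constant} to fix the minimal constants $c_R^f$ and $c_R^g$ of $f$ and $g$ on $R$.

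Property (i) and the case $a\ge0$ of (iii) are changes of variable in the inner infimum: replacing $c$ by $c-a$ leaves the infimum defining $\norm{f+a}_{\PBMO^{+}}$ over $R$ unchanged, and for $a>0$ the identities $(af-c)_\pm=a(f-c/a)_\pm$ produce a factor $a^q$ (the case $a=0$ is trivial). For $a<0$, write $a=-|a|$ and substitute $c=-|a|b$; then $(af-c)_+=|a|(f-b)_-$ and $(af-c)_-=|a|(f-b)_+$, so that
\[
\dashint_{R^+(\gamma)}(af-c)_+^q+\dashint_{R^-(\delta)}(af-c)_-^q=|a|^q\lp\dashint_{R^+(\gamma)}(f-b)_-^q+\dashint_{R^-(\delta)}(f-b)_+^q\rp.
\]
Taking the infimum over $b$ and the supremum over $R$, the right-hand side is $|a|^q$ times the $q$-th power of the functional from Definition~\ref{def.pbmo} with the roles of $R^+$ and $R^-$ — hence of past and future — interchanged, which is $\norm{f}_{\PBMO^{-}}^q$; this gives $\norm{af}_{\PBMO^{+}}=-a\norm{f}_{\PBMO^{-}}$.

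For (ii), fix $R$ and use $c=c_R^f+c_R^g$ as a competitor. From $(f+g-c)_\pm\le(f-c_R^f)_\pm+(g-c_R^g)_\pm$ and the scalar inequality with $s=q$,
\[
\dashint_{R^+(\gamma)}(f+g-c)_+^q+\dashint_{R^-(\delta)}(f+g-c)_-^q\le\max\{1,2^{q-1}\}\,(A_f+A_g),
\]
where $A_h=\dashint_{R^+(\gamma)}(h-c_R^h)_+^q+\dashint_{R^-(\delta)}(h-c_R^h)_-^q\le\norm{h}_{\PBMO^{+}}^q$ by Lemma~\ref{PBMO_constant}. Taking a $q$-th root, using $\max\{1,2^{q-1}\}^{1/q}=\max\{1,2^{1-1/q}\}$, and then the scalar inequality with $s=1/q$ on $(A_f+A_g)^{1/q}$, one collects the constant $\max\{1,2^{1-1/q}\}\max\{1,2^{1/q-1}\}=\max\{2^{1/q-1},2^{1-1/q}\}$, since exactly one of the two bases exceeds $1$; the supremum over $R$ then finishes (ii).

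For (iv), a sharper choice of competitor avoids the first use of the scalar inequality. Fix $R$ and put $c=\max\{c_R^f,c_R^g\}$; say $c=c_R^f\ge c_R^g$. Since $\max\{f,g\}\ge f$, we get $(\max\{f,g\}-c)_-\le(f-c_R^f)_-$, while $c\ge c_R^g$ gives $(\max\{f,g\}-c)_+\le\max\{(f-c_R^f)_+,(g-c_R^g)_+\}$, hence $(\max\{f,g\}-c)_+^q\le(f-c_R^f)_+^q+(g-c_R^g)_+^q$. Averaging over $R^+(\gamma)$ and $R^-(\delta)$ and using Lemma~\ref{PBMO_constant},
\[
\dashint_{R^+(\gamma)}(\max\{f,g\}-c)_+^q+\dashint_{R^-(\delta)}(\max\{f,g\}-c)_-^q\le A_f+\dashint_{R^+(\gamma)}(g-c_R^g)_+^q\le\norm{f}_{\PBMO^{+}}^q+\norm{g}_{\PBMO^{+}}^q,
\]
and the scalar inequality with $s=1/q$ after the $q$-th root gives the factor $\max\{1,2^{1/q-1}\}$; the case $c_R^g>c_R^f$, and the bound for $\min\{f,g\}$ with $c=\min\{c_R^f,c_R^g\}$, are symmetric. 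The only mildly delicate points are the exponent bookkeeping collapsing to the stated constants in (ii) and (iv), and recognising in (iii) that time reversal turns the $\PBMO^{+}$ functional into the $\PBMO^{-}$ one; everything else is a direct estimate, so I do not expect a genuine obstacle.
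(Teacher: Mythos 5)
Your proposal is correct and follows essentially the same route as the paper: fix $R$, use $c_R^f+c_R^g$ (resp.\ $\max\{c_R^f,c_R^g\}$, $\min\{c_R^f,c_R^g\}$) as a competitor for the inner infimum, apply the elementary $s$-power inequality once inside the averages and once after the $q$-th root, and handle (i) and (iii) by a change of the free constant together with the observation that negation swaps $(\cdot)_+$ and $(\cdot)_-$, hence past and future. The only cosmetic difference is in (iv), where you use $\max\{x,y\}^q\le x^q+y^q$ pointwise while the paper splits $R^+(\gamma)$ into $\{f\ge g\}$ and $\{f<g\}$; both yield the same bound and constant.
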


\begin{remark}
The constants in (ii) and (iv) can be avoided by considering the norm
\[
 \sup_{R \subset \Omega_T} \inf_{c \in \mathbb{R}}\left[ \lp \dashint_{R^+(\gamma)} (f-c)_+^q \rp^\frac{1}{q} 
+ \lp\dashint_{R^-(\delta)} (f-c)_-^q \rp^\frac{1}{q}\right],
\]
which is an equivalent norm with our definition. 
However, the current definition will be convenient in the proof of the John--Nirenberg lemma below.
\end{remark}

\begin{proof}[Proof of Lemma~\ref{props_PBMO}]
(i)
We observe that
\begin{align*}
&\inf_{c \in \mathbb{R}} \lp \dashint_{R^+(\gamma)} (f+a-c)_+^q + \dashint_{R^-(\delta)} (f+a-c)_-^q \rp^\frac{1}{q}\\
&\qquad= \inf_{c' \in \mathbb{R}} \lp \dashint_{R^+(\gamma)} (f-c')_+^q + \dashint_{R^-(\delta)} (f-c')_-^q \rp^\frac{1}{q}
\end{align*}
with $c' = c - a$.
Taking supremum over all parabolic rectangles $R \subset \Omega_T$ completes the proof.

(ii)
We note that
\[
\dashint_{R^\pm(\gamma) } (f + g -c_R^f - c_R^g)_\pm^q \leq \max\{ 1, 2^{q-1} \} \lp \dashint_{R^\pm(\gamma) } (f - c_R^f )_\pm^q + \dashint_{R^\pm(\gamma) } ( g - c_R^g)_\pm^q \rp
\]
for $c_R^f, c_R^g \in \mathbb{R}$.
This implies
\begin{align*}
&\inf_{c \in \mathbb{R}} \lp \dashint_{R^+(\gamma)} (f + g - c)_+^q + \dashint_{R^-(\delta)} (f + g - c)_-^q \rp^\frac{1}{q} \\
& \qquad  \leq \lp \dashint_{R^+(\gamma) } (f + g -c_R^f - c_R^g)_+^q + \dashint_{R^-(\delta) } (f + g -c_R^f - c_R^g)_-^q \rp^\frac{1}{q} \\
& \qquad  \leq \max\{ 1, 2^{1-\frac{1}{q}} \} \left( \dashint_{R^+(\gamma) } (f - c_R^f )_+^q + \dashint_{R^+(\gamma) } ( g - c_R^g)_+^q \right.\\
& \qquad \qquad\left.  + \dashint_{R^-(\delta) } (f - c_R^f )_-^q + \dashint_{R^-(\delta) } ( g - c_R^g)_-^q \right)^\frac{1}{q} \\
& \qquad  \leq \max\{ 2^{\frac{1}{q}-1} , 2^{1-\frac{1}{q}} \} \left( \dashint_{R^+(\gamma) } (f - c_R^f )_+^q + \dashint_{R^-(\delta) } (f - c_R^f )_-^q \right)^\frac{1}{q} \\
& \qquad \qquad  + \max\{ 2^{\frac{1}{q}-1} , 2^{1-\frac{1}{q}} \} \left( \dashint_{R^+(\gamma) } ( g - c_R^g)_+^q + \dashint_{R^-(\delta) } ( g - c_R^g)_-^q \right)^\frac{1}{q} .
\end{align*}
By taking supremum over all parabolic rectangles $R \subset \Omega_T$, we obtain
\[
\norm{f+g}_{\PBMO^{+}} \leq  \max\{ 2^{\frac{1}{q}-1} , 2^{1-\frac{1}{q}} \} \lp \norm{f}_{\PBMO^{+}} + \norm{g}_{\PBMO^{+}} \rp .
\]

(iii)
Observe that
\begin{align*}
&\inf_{c \in \mathbb{R}} \lp \dashint_{R^+(\gamma)} (af-c)_+^q + \dashint_{R^-(\delta)} (af-c)_-^q \rp^\frac{1}{q}\\
&\qquad= \inf_{c' \in \mathbb{R}} \lp \dashint_{R^+(\gamma)} (af-ac')_+^q + \dashint_{R^-(\delta)} (af-ac')_-^q \rp^\frac{1}{q} \\
& \qquad = \begin{cases} 
    \displaystyle
      a \inf_{c' \in \mathbb{R}} \lp \dashint_{R^+(\gamma)} (f-c')_+^q + \dashint_{R^-(\delta)} (f-c')_-^q \rp^\frac{1}{q}, &a \geq 0, \\
      \displaystyle
      -a \inf_{c' \in \mathbb{R}} \lp \dashint_{R^+(\gamma)} (f-c')_-^q + \dashint_{R^-(\delta)} (f-c')_+^q \rp^\frac{1}{q}, &a < 0,
   \end{cases}
\end{align*}
with $c' =c/a$. The claim follows from this observation.

(iv)
For the positive part, we have
\begin{align*}
&\int_{R^+(\gamma)} (\max\{f,g\} - \max\{c_R^f,c_R^g\})_+^q \\
& \qquad \leq \int_{R^+(\gamma) \cap \{ f \geq g \} } (\max\{f,g\} -c_R^f)_+^q  + \int_{R^+(\gamma) \cap \{ f < g \} } ( \max\{f,g\} -c_R^g)_+^q \\
&\qquad= \int_{R^+(\gamma) \cap \{ f \geq g \} } (f -c_R^f)_+^q + \int_{R^+(\gamma) \cap \{ f < g \} } ( g -c_R^g)_+^q \\
&\qquad \leq \int_{R^+(\gamma) } (f -c_R^f)_+^q + \int_{R^+(\gamma) } ( g -c_R^g)_+^q .
\end{align*}
For the negative part, we have
\[
\dashint_{R^-(\delta)} (\max\{f,g\} - \max\{c_R^f,c_R^g\})_-^q  \leq \dashint_{R^-(\delta)} (f -c_R^f)_-^q + \dashint_{R^-(\delta)} (g -c_R^g)_-^q .
\]
Hence, we obtain
\begin{align*}
&\inf_{c \in \mathbb{R}} \lp \dashint_{R^+(\gamma)} (\max\{f,g\} -c)_+^q  + \dashint_{R^-(\delta)} (\max\{f,g\} -c)_-^q  \rp^\frac{1}{q} \\
& \qquad \leq \lp \dashint_{R^+(\gamma)} (\max\{f,g\} - \max\{c_R^f,c_R^g\})_+^q  + \dashint_{R^-(\delta)} (\max\{f,g\} - \max\{c_R^f,c_R^g\})_-^q  \rp^\frac{1}{q} \\
& \qquad  \leq \lp \dashint_{R^+(\gamma) } (f -c_R^f)_+^q + \dashint_{R^-(\delta)} (f -c_R^f)_-^q + \dashint_{R^+(\gamma) } ( g -c_R^g)_+^q + \dashint_{R^-(\delta)} (g -c_R^g)_-^q \rp^\frac{1}{q} \\
& \qquad \leq \max\{1, 2^{\frac{1}{q} -1 }\} \lp \dashint_{R^+(\gamma) } (f -c_R^f)_+^q + \dashint_{R^-(\delta)} (f -c_R^f)_-^q \rp^\frac{1}{q} \\
& \qquad \qquad  + \max\{1, 2^{\frac{1}{q} -1 }\} \lp \dashint_{R^+(\gamma) } ( g -c_R^g)_+^q + \dashint_{R^-(\delta)} (g -c_R^g)_-^q \rp^\frac{1}{q} .
\end{align*}
By taking supremum over all parabolic rectangles $R \subset \Omega_T$, we conclude that
\[
\norm{\max\{f,g\}}_{\PBMO^{+}} \leq \max\{1, 2^{\frac{1}{q} -1 }\} \lp \norm{f}_{\PBMO^{+}} + \norm{g}_{\PBMO^{+}} \rp .
\]
The claim for $\min\{f,g\}$ follows similarly.
\end{proof}

\begin{remark}
Every function $f \in \PBMO_{\gamma,\delta,q}^{+}(\Omega_T)$ can be approximated pointwise by a sequence of bounded $\PBMO_{\gamma,\delta,q}^{+}(\Omega_T)$ functions, since the truncations
\[
f_k(x) = \min\{ \max\{ f(x), -k \}, k \} ,
\quad k\in\mathbb N,
\] 
belong to $\PBMO_{\gamma,\delta,q}^{+}(\Omega_T)$ with
\[
\norm{f_k}_{\PBMO_{\gamma,\delta,q}^{+}(\Omega_T)} \leq \max\{1, 2^{\frac{2}{q} -2 }\} \norm{f}_{\PBMO_{\gamma,\delta,q}^{+}(\Omega_T)}
\]
for every $k\in\mathbb N$, see (i) and (iv) of Lemma~\ref{props_PBMO}, and it holds that $f_k \to f$ pointwise and in $L^q(\Omega_T)$ as $k \to \infty$.
\end{remark}

\section{Parabolic John--Nirenberg inequalities} 
\label{section4}

This section discusses several versions of the John--Nirenberg inequality for parabolic $\BMO$.
We begin with a version where the exponential bound is given in terms of integral averages.
For short, we suppress the variables $(x,t)$ in the notation and, for example, write
\[
R^{+}(\alpha)\cap\{(f-c_{R})_+ > \lambda\}
=\{(x,t)\in R^{+}(\alpha):(f(x,t)-c_{R})_+ > \lambda\} 
\]
in the sequel.

\begin{theorem}
\label{reshetnyak}
Let $R \subset \mathbb{R}^{n+1}$ be a parabolic rectangle, $0 \leq \gamma < 1$, $\gamma < \alpha < 1$ and $0 < q \leq 1$.
Assume that $f \in\PBMO_{\gamma,q}^+(R)$ and let  $\norm{f}=\norm{f}_{\PBMO_{\gamma,q}^{+}(R)}$. 
Then there exist constants $c_R\in\mathbb R$, $A=A(n,p,q,\gamma,\alpha)$, $B=B(n,p,q,\gamma,\alpha)$ and $C=C(n,p,q,\gamma,\alpha)$ such that
\[
\lvert R^{+}(\alpha)\cap\{(f-c_{R})_+ > \lambda\} \rvert 
\leq e^{-B (\lambda/\norm{f})^q } \frac{A}{\norm{f}^q} \int_{R^{+}(\gamma)}(f-c_{R})_+^q 
\]
and
\[
\lvert R^{-}(\alpha)\cap\{(f-c_{R})_- > \lambda \} \rvert 
\leq e^{-B(\lambda/\norm{f})^q } \frac{A}{\norm{f}^q} \int_{R^{-}(\gamma)} (f-c_{R})_-^q
\]
for every $\lambda \geq C \norm{f}$.
\end{theorem}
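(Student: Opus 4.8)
The plan is to run a Calderón--Zygmund stopping-time argument adapted to the parabolic rectangles in the $p$-geometry, iterating it to produce the exponential decay. Fix a parabolic rectangle $R=R(x_0,t_0,L)$ and let $c_R$ be the minimal constant from Lemma~\ref{PBMO_constant}, so that both tail integrals over $R^+(\gamma)$ and $R^-(\gamma)$ are controlled by $\norm{f}^q$. By subtracting $c_R$ we may assume $c_R=0$. The first step is to set up a dyadic-type subdivision of $R^+(\gamma)$ (and symmetrically of $R^-(\gamma)$) into parabolic subrectangles: bisecting the spatial cube $Q(x_0,L)$ into $2^n$ congruent subcubes of side $L/2$ forces, by the $p$-scaling $t\sim L^p$, a subdivision of the time interval into pieces of length $\sim (L/2)^p$, so the natural children are parabolic rectangles of the next generation. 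The key geometric point to verify is that the upper part $R^+(\alpha)$ of the original rectangle sits, up to the time lag, inside the union of the lower/upper parts of these children — this is where the hypothesis $\gamma<\alpha<1$ is used, exactly as the time lag is exploited in Moser's argument and in \eqref{eq.harnackproof}.

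The second step is the single-scale estimate. Perform a stopping-time selection at height $\lambda_1 = C\norm{f}$ for the average of $(f)_+^q$ over the selected subrectangles; by the usual maximal-function / weak-type reasoning one gets a collection of pairwise disjoint stopped parabolic rectangles $\{R_i^1\}$ with $\lambda_1^q \le \dashint (f)_+^q \lesssim 2^{n+p}\lambda_1^q$ on each, covering the set where the relevant average exceeds $\lambda_1$, and with $\sum_i |R_i^1| \le \lambda_1^{-q}\int_{R^+(\gamma)} f_+^q$. On each stopped rectangle $R_i^1$, invoke the PBMO hypothesis to get a constant $c_{R_i^1}$; the bounded overlap / stopping inequality gives $|c_{R_i^1}| \lesssim \norm{f}$ when $C$ is chosen large enough (this is where $C=C(n,p,q,\gamma,\alpha)$ and $0<q\le1$ — so $t\mapsto t^q$ is subadditive — enter). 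Then $(f-\lambda_1')_+ \le (f - c_{R_i^1})_+$ pointwise on $R_i^1$ for a suitable threshold, so the "excess" of $f$ above a shifted level, restricted to $R_i^1$, again has its $L^q$-average over the appropriate sub-part bounded by $\norm{f}^q$, reproducing the initial hypothesis on $R_i^1$ with the level raised by a fixed amount $\sim \norm{f}$.

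The third step is the iteration: applying step two inside each $R_i^1$ with the level raised to $\lambda_2$, etc., yields after $k$ steps a family covering $\{f_+ > \lambda_k\}$ with total measure $\le \theta^k |R^+(\gamma)|$-type bound, where $\theta = \theta(n,p,q,\gamma,\alpha)<1$ provided the level increment per step is chosen $\ge$ a fixed multiple of $\norm{f}$; more precisely $\sum |R_i^k| \le \lambda_1^{-q}(\prod \text{gain factors})\int_{R^+(\gamma)}f_+^q$, and optimizing over $k$ for a given $\lambda$ converts the geometric decay in $k$ into the claimed $e^{-B(\lambda/\norm{f})^q}$ factor, with $A,B$ absorbing the constants. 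The negative-part statement is the mirror image, using $R^-(\delta)=R^-(\gamma)$ and the reflected subdivision; since $\delta=\gamma$ here the two halves are symmetric.

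The main obstacle I anticipate is the geometric bookkeeping in steps one and two: ensuring that after bisection the upper part $R^+(\alpha)$ is genuinely covered by the lower parts $R_i^{-}(\gamma)$ (or upper parts) of the children with the time lag respected, and that the stopped subrectangles can be taken pairwise disjoint while still covering the correct super-level set. Getting the comparison of $c_{R_i}$ to $\norm{f}$ uniform across generations — so that the level increment per iteration is a constant multiple of $\norm{f}$ independent of $k$ — is the crux that makes the geometric series, and hence the exponential bound, work; this is precisely the parabolic feature absent from the classical John--Nirenberg proof.
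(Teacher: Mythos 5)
Your high-level plan — a Calder\'on--Zygmund stopping-time decomposition of the upper part of $R$ into parabolic subrectangles, estimate the stopped constants, iterate, and convert the geometric decay in generation into an exponential decay in $\lambda$ — is indeed the shape of the paper's argument. However, two of the steps you flag as worries are in fact genuine gaps that bisection cannot close, and a third is treated too casually.

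First, bisection by $2$ in the spatial variable is not enough. The paper divides the spatial cube into $2^m$ pieces with $m$ chosen so that $3+\alpha\le 2^{pm+1}(\alpha-\gamma)$; only then does the parabolic rectangle $R_i$ sitting on top of a child $S_i^+$ satisfy the key containment $R_i\subset R_{i-1}^+(\gamma)$, i.e.\ the child (together with its lag region) lands inside the parent's $\gamma$-lagged upper part. If $\alpha-\gamma$ is small, $m$ must be large. With $m=1$ the containment fails and the whole comparison between the constants of a child and its parent, which you rely on to get $c_{R_i^1}\lesssim C\norm{f}$, breaks.

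Second, and more seriously, the time lag forces you to look at the lower parts $R_i^-(\gamma)$ of the stopped rectangles, and these overlap badly even though the upper parts are disjoint. You mention the worry that ``the stopped subrectangles can be taken pairwise disjoint,'' but you propose no mechanism. The paper extracts a disjoint subfamily $\{\widetilde S^-_j\}$ of the lower parts, proves the measure comparison $\lvert\bigcup_i S^+_i\rvert\le c_1\sum_j\lvert\widetilde S^-_j\rvert$ using the projection containments (equations \eqref{plussubset} and \eqref{minusplussubset} in the paper's notation), and then sums the estimate $\lambda^q-1\le\dashint_{\widetilde S^-_j}f_+^q$ over the \emph{disjoint} $\widetilde S^-_j$. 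Without this disjointification the summation step in your argument simply does not go through; nothing controls the overlap multiplicity of the $R_i^-(\gamma)$'s, which is exactly the new obstruction absent from classical $\BMO$.

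Third, your stopping criterion (stop when $\dashint f_+^q>\lambda_1^q$) is a different selection rule from the paper's (stop when $\lambda<c_{R_i}$). It could potentially be made to work, but the weak reverse-H\"older bound $\dashint(f)_+^q\lesssim 2^{n+p}\lambda_1^q$ on a stopped rectangle — which you invoke via ``the usual maximal-function reasoning'' — does not hold here: the PBMO condition controls oscillation on upper/lower lagged parts, not direct averages, and the lagged geometry prevents the usual ``parent not stopped, child stopped, compare averages'' argument from transferring directly. The paper instead compares $\lvert S(\lambda)\rvert$ with $\lvert S(\delta)\rvert$ for $\lambda>\delta$ by nesting the $\widetilde S^-_j$ (for $\lambda$) inside the lagged upper parts of ancestors of $S^+_k$ (for $\delta$), and derives $\lvert S((\lambda+a)^{1/q})\rvert\le\frac12\lvert S(\lambda^{1/q})\rvert$ which it iterates. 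You should work the selection rule out against the PBMO definition before committing to it.
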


\begin{proof}
Let $R_0=R=R(x_0,t_0,L) = Q(x_0,L) \times (t_0-L^p, t_0+L^p)$.
By considering the function $f(x+x_0,t+t_0)$, we may assume that the center of $R_0$ is the origin, that is, $R_0 = Q(0,L) \times (-L^p, L^p)$.
By (i) and (iii) of Lemma~\ref{props_PBMO}, we may assume that $c_{R_0} = 0$ and $\norm{f}^q = \tfrac{1}{2}(1-\alpha)/(1-\gamma)$.
We note that it is sufficient to prove the first inequality of the theorem since
the second inequality follows from the first one by applying it to the function $-f(x,-t)$.

We claim that
\[
\lvert R^{+}_0(\alpha)\cap\{f_+ > \lambda \} \rvert 
\leq e^{-B(\lambda/\norm{f})^q } \frac{A}{\norm{f}^q} \int_{R^{+}_0(\gamma)} f_+^q 
\]
for every $\lambda \geq C \norm{f}$.
Let $m$ be the smallest integer with
$3 + \alpha \leq 2^{pm+1} (\alpha - \gamma)$, 
that is,
\[
\frac{1}{p} \log_2\lp\frac{3+ \alpha}{2 (\alpha - \gamma)}\rp \leq m < \frac{1}{p} \log_2\lp\frac{3+ \alpha}{2 (\alpha - \gamma)}\rp + 1 .
\]
Let $S^+_0 = R^+_0(\alpha) = Q(0,L) \times (\alpha L^p,L^p) $. The time length of $S^+_0$ is $l_t(S^+_0) = (1-\alpha)L^p$.
We partition $S^+_0$ by dividing each spatial edge $[-L,L]$ into $2^m$ equally long intervals.
If
\[
\frac{l_t(S_{0}^+)}{\lfloor 2^{pm} \rfloor} < \frac{(1-\alpha)L^p}{2^{pm}},
\]
we divide the time interval of $S^+_0$ into $\lfloor 2^{pm} \rfloor$ equally long intervals. 
Otherwise, we divide the time interval of $S^+_0$ into $\lceil 2^{pm} \rceil$ equally long intervals.
We obtain subrectangles $S^+_1$ of  $S^+_0$ with spatial side length 
$l_x(S^+_1)=l_x(S^+_0)/2^m = L / 2^m$ and time length either 
\[
l_t(S^+_1)=\frac{l_t(S^+_0)}{\lfloor 2^{pm} \rfloor} 
=\frac{(1-\alpha)L^p}{\lfloor 2^{pm} \rfloor} 
\quad\text{or}\quad
l_t(S^+_1)=\frac{(1-\alpha)L^p}{\lceil 2^{pm} \rceil}.
\]
For every $S^+_1$, there exists a unique rectangle $R_1$ with spatial side length $l_x = L / 2^{m}$ and time length $l_t = 2 L^p / 2^{mp}$
such that $R_1$ has the same top as $S^+_1$.
We select those rectangles $S^+_1$ for which $\lambda < c_{R_1}$ and denote the obtained collection by $\{ S^+_{1,j} \}_j$.
If $\lambda \geq c_{R_1}$, we subdivide $S^+_1$ in the same manner as above
and select all those subrectangles $S^+_2$ for which $\lambda < c_{R_2}$ to obtain family $\{ S^+_{2,j} \}_j$.
We continue this selection process recursively.
At the $i$th step, we partition unselected rectangles $S^+_{i-1}$ by dividing each spatial side into $2^m$ equally long intervals. 
If 
\begin{equation}
\label{JNproof_eq1}
\frac{l_t(S_{i-1}^+)}{\lfloor 2^{pm} \rfloor} < \frac{(1-\alpha)L^p}{2^{pmi}},
\end{equation}
we divide the time interval of $S^+_{i-1}$ into $\lfloor 2^{pm} \rfloor$ equally long intervals. 
If
\begin{equation}
\label{JNproof_eq2}
\frac{l_t(S_{i-1}^+)}{\lfloor 2^{pm} \rfloor} \geq \frac{(1-\alpha)L^p}{2^{pmi}},
\end{equation}
we divide the time interval of $S^+_{i-1}$ into $\lceil 2^{pm} \rceil$ equally long intervals.
We obtain subrectangles $S^+_i$. 
For every $S^+_i$, there exists a unique rectangle $R_i$ with spatial side length $l_x = L / 2^{mi}$ and time length $l_t = 2 L^p / 2^{pmi}$
such that $R_i$ has the same top as $S^+_i$.
Select those $S^+_i$ for which $\lambda < c_{R_i}$ and denote the obtained collection by $\{ S^+_{i,j} \}_j$.
If $\lambda \geq c_{R_i}$, we continue the selection process in $S^+_i$.
In this manner we obtain a collection $\{S^+_{i,j} \}_{i,j}$ of pairwise disjoint rectangles.

Observe that if \eqref{JNproof_eq1} holds, then we have
\[
l_t(S_i^+) = \frac{l_t(S^+_{i-1})}{\lfloor 2^{pm} \rfloor} < \frac{(1-\alpha)L^p}{2^{pmi}}.
\]
On the other hand, if \eqref{JNproof_eq2} holds, then
\[
l_t(S_i^+) = \frac{l_t(S^+_{i-1})}{\lceil 2^{pm} \rceil} \leq \frac{l_t(S^+_{i-1})}{2^{pm}} \leq \dots \leq \frac{(1-\alpha)L^p}{2^{pmi}} .
\]
This gives an upper bound 
\[
l_t(S_i^+) \leq \frac{(1-\alpha)L^p}{2^{pmi}}
\]
for every $S_i^+$.

\begin{figure}[t!]
    \centering
    \includegraphics[trim=0cm 2cm 0cm 0.75cm, width=1\textwidth]{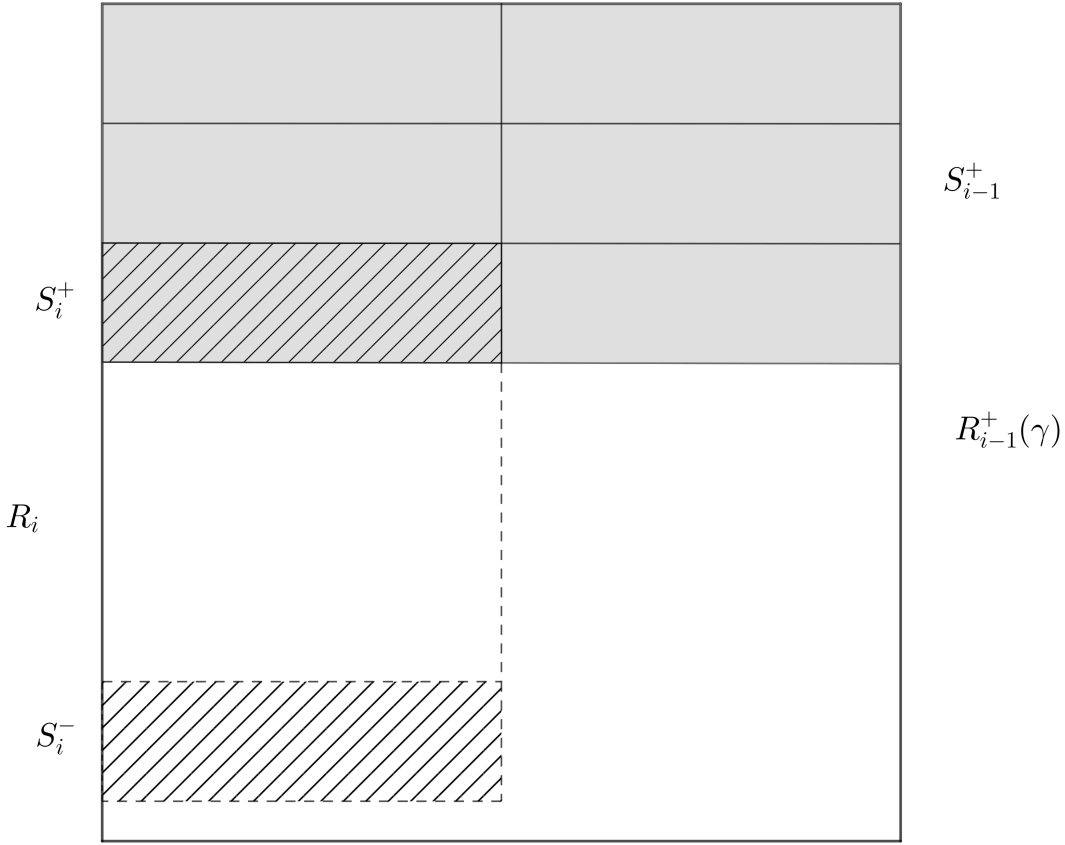}
    \caption{Schematic picture of the decomposition at the $i$th step.}
    \label{fig:decomp}
\end{figure}

Suppose that \eqref{JNproof_eq2} is satisfied at the $i$th step.
Then we have a lower bound for the time length of $S_i^+$, since
\[
l_t(S^+_i) = \frac{l_t(S_{i-1}^+)}{\lceil 2^{pm} \rceil} \geq \frac{\lfloor 2^{pm} \rfloor}{\lceil 2^{pm} \rceil} \frac{(1-\alpha)L^p}{2^{pmi}} \geq \frac{1}{2} \frac{(1-\alpha)L^p}{2^{pmi}} .
\]
On the other hand, if \eqref{JNproof_eq1} is satisfied, then
\[
l_t(S^+_i) = \frac{l_t(S_{i-1}^+)}{\lfloor 2^{pm} \rfloor} \geq \frac{l_t(S_{i-1}^+)}{ 2^{pm}}.
\]
In this case, \eqref{JNproof_eq2} has been satisfied at an earlier step $i'$ with $i'< i$.
We obtain
\[
l_t(S^+_i) \geq \frac{l_t(S_{i-1}^+)}{ 2^{pm}} \geq \dots \geq \frac{l_t(S_{i'}^+)}{ 2^{pm(i-i')}} \geq \frac{1}{2} \frac{(1-\alpha)L^p}{ 2^{pmi}}
\]
by using the lower bound for $S_{i'}^+$.
Thus, we have 
\[
\frac{1}{2} \frac{(1-\alpha)L^p}{2^{pmi}} \leq l_t(S^+_i) \leq \frac{(1-\alpha)L^p}{2^{pmi}}
\]
for every $S^+_i$.
By using the lower bound for the time length of  $S^+_i$ and the choice of $m$, we observe that
\begin{align*}
l_t(R_i) - l_t(S^+_i) 
&\leq \frac{2 L^p}{2^{pmi}} - \frac{1}{2} \frac{(1-\alpha)L^p}{2^{pmi}} 
= \frac{1}{2} \frac{L^p}{2^{pmi}} (3+ \alpha) \\
&\leq \frac{(\alpha-\gamma) L^p}{2^{pm(i-1)}} 
= \frac{(1-\gamma)L^p}{2^{pm(i-1)}} - \frac{(1-\alpha)L^p}{2^{pm(i-1)}} \\
&\leq l_t(R^+_{i-1}(\gamma)) - l_t(S^+_{i-1}) .
\end{align*}
This implies
\begin{equation}
\label{subset}
R_{i} \subset R^+_{i-1}(\gamma)
\end{equation}
for a fixed rectangle $S^+_{i-1}$ and for every subrectangle $S^+_{i} \subset S^+_{i-1}$ (see Fig.~\ref{fig:decomp}).
By the construction of the subrectangles $S^+_i$, we have
\begin{equation}
\label{measure1}
2^{nm} \lfloor 2^{pm} \rfloor \lvert S^+_i \rvert 
\leq \lvert S^+_{i-1} \rvert \leq 2^{nm} \lceil 2^{pm} \rceil \lvert S^+_i \rvert 
\end{equation}
and
\begin{equation}
\label{measure2}
\frac{1}{2} \frac{1-\alpha}{1-\gamma} \lvert R^+_i(\gamma) \rvert 
\leq \lvert S^+_i \rvert \leq \frac{1-\alpha}{1-\gamma} \lvert R^+_i(\gamma) \rvert .
\end{equation}

We have a collection $\{ S^+_{i,j} \}_{i,j}$ of pairwise disjoint rectangles. 
However, the rectangles in the corresponding collection $\{ S^-_{i,j} \}_{i,j}$ may overlap. 
Thus, we replace it by a subfamily $\{ \widetilde{S}^-_{i,j} \}_{i,j}$ of pairwise disjoint rectangles, which is constructed in the following way.
At the first step, choose $\{ S^-_{1,j} \}_{j}$ and denote it by $\{ \widetilde{S}^-_{1,j} \}_j$. 
Then consider the collection $\{ S^-_{2,j} \}_{j}$ where each $S^-_{2,j}$ either intersects some $\widetilde{S}^-_{1,j}$ or does not intersect any $\widetilde{S}^-_{1,j}$. 
Select the rectangles $S^-_{2,j}$ that do not intersect any $\widetilde{S}^-_{1,j}$, and denote the obtained collection by $\{ \widetilde{S}^-_{2,j} \}_j$.
At the $i$th step, choose those $S^-_{i,j}$ that do not intersect any previously selected $\widetilde{S}^-_{i',j}$, $i' < i$.
Hence, we obtain a collection $\{ \widetilde{S}^-_{i,j} \}_{i,j}$ of pairwise disjoint rectangles.
Observe that for every $S^-_{i,j}$ there exists $\widetilde{S}^-_{i',j}$ with $i' < i$ such that
\begin{equation}
\label{plussubset}
\text{pr}_x(S^-_{i,j}) \subset \text{pr}_x(\widetilde{S}^-_{i',j}) \quad \text{and} \quad \text{pr}_t(S^-_{i,j}) \subset 3 \text{pr}_t(\widetilde{S}^-_{i',j}) .
\end{equation}
Here pr$_x$ denotes the projection to $\mathbb R^n$ and pr$_t$ denotes the projection to the time axis.

Rename $\{ S^+_{i,j} \}_{i,j}$ and $\{ \widetilde{S}^-_{i,j} \}_{i,j}$ as $\{ S^+_{i} \}_{i}$ and $\{ \widetilde{S}^-_{j} \}_j$, respectively.
Let $S(\lambda) = \bigcup_i S^+_i$.
Note that $S^+_i$ is spatially contained in $S^-_i$, that is, $\text{pr}_x S^+_i\subset \text{pr}_x S^-_i$.
In the time direction, we have
\begin{equation}
\label{minusplussubset}
\text{pr}_t(S^+_i) \subset \text{pr}_t(R_i) 
\subset \frac{7 + \alpha}{1-\alpha} \text{pr}_t(S^-_i) ,
\end{equation}
since
\[
\lp \frac{7 + \alpha}{1-\alpha} + 1 \rp \frac{l_t(S^-_i)}{2} \geq  \frac{8}{1-\alpha}  \frac{(1-\alpha)L^p}{2^{pmi+2}} = \frac{2L^p}{2^{pmi}} = l_t(R_i) .
\]
Therefore, by~\eqref{plussubset} and~\eqref{minusplussubset}, it holds that
\begin{equation}
\label{start}
\lvert S(\lambda) \rvert = \Big\lvert \bigcup_i S^+_i \Big\rvert \leq c_1 \sum_j \lvert \widetilde{S}^-_j \rvert 
\quad\text{with}\quad
c_1 = 3\frac{7+\alpha}{1-\alpha}.
\end{equation}

Let $\lambda > \delta > 0$ and consider the collection $\{S^+_k\}_k$ for $\delta$.
Then every $S^+_i$ is contained in a unique $S^+_k$.
Let $\mathcal{J}_k = \{ j \in \mathbb{N}: \widetilde{S}^+_j \subset S^+_k \}$.
Using \eqref{measure2}, we have
\begin{align*}
\lambda^q &< c_{R_j}^q \leq \dashint_{\widetilde{S}^-_j} (c_{R_j} - f)^q_+  + \dashint_{\widetilde{S}^-_j} f^q_+ \\
&\leq 2 \frac{1-\gamma}{1-\alpha} \dashint_{R^-_j(\gamma)} (c_{R_j} - f)^q_+ + \dashint_{\widetilde{S}^-_j} f^q_+ \\
&\leq 2 \frac{1-\gamma}{1-\alpha} \norm{f}^q + \dashint_{\widetilde{S}^-_j} f^q_+ = 1 + \dashint_{\widetilde{S}^-_j} f^q_+ 
\end{align*}
for every $\widetilde{S}^-_j$.
By summing over $j$, we obtain
\begin{equation}
\label{decomp1}
(\lambda^q - 1) \sum_j \lvert \widetilde{S}^-_j \rvert \leq \sum_j \int_{\widetilde{S}^-_j} f^q_+  = \sum_k \sum_{j \in \mathcal{J}_k} \int_{\widetilde{S}^-_j} f^q_+ .
\end{equation}
Let $k \in \mathbb{N}$. We have $\widetilde{S}^+_j \subset S^+_k$ for every $j \in \mathcal{J}_k$, where $S^+_k$ was obtained by subdividing a previous $S^+_{k^-}$ for which $a_{R_{k^-}} \leq \delta$.
Hence, it holds that $\widetilde{S}^-_j \subset R_k$ for every $j \in \mathcal{J}_k$.
By \eqref{subset}, it follows that $\widetilde{S}^-_j \subset R^+_{k^-}(\gamma)$ for every $j \in \mathcal{J}_k$.
Since $\widetilde{S}^-_j$ are pairwise disjoint, by applying \eqref{measure1} together with \eqref{measure2}, we arrive at
\begin{align*}
\sum_{j \in \mathcal{J}_k} \int_{\widetilde{S}^-_j} f^q_+ &\leq \sum_{j \in \mathcal{J}_k} \int_{\widetilde{S}^-_j} (f - a_{R_{k^-}} + \delta)^q_+ \\
&\leq \sum_{j \in \mathcal{J}_k} \int_{\widetilde{S}^-_j} (f - a_{R_{k^-}})^q_+ + \sum_{j \in \mathcal{J}_k} \int_{\widetilde{S}^-_j} \delta^q \\
&\leq \int_{R^+_{k^-}(\gamma)} (f - a_{R_{k^-}})^q_+ + \delta^q \sum_{j \in \mathcal{J}_k} \lvert \widetilde{S}^-_j \rvert \\
&\leq \frac{1}{2} \frac{1-\alpha}{1-\gamma} \lvert R^+_{k^-}(\gamma) \rvert + \delta^q \sum_{j \in \mathcal{J}_k} \lvert \widetilde{S}^-_j \rvert \\
&\leq \lvert S^+_{k^-} \rvert + \delta^q \sum_{j \in \mathcal{J}_k} \lvert \widetilde{S}^-_j \rvert \\
&\leq 2^{nm} \lceil 2^{pm} \rceil \lvert S^+_k \rvert + \delta^q \sum_{j \in \mathcal{J}_k} \lvert \widetilde{S}^-_j \rvert \\
&\leq c_2 \lvert S^+_k \rvert + \delta^q \sum_{j \in \mathcal{J}_k} \lvert \widetilde{S}^-_j \rvert ,
\end{align*}
where
\begin{align*}
2^{nm} \lceil 2^{pm} \rceil 
\leq 2^{nm} 2^{pm+1}
&\leq 2^{1+(n+p) \bigl( \frac{1}{p} \log_2\lp\frac{3+\alpha}{2(\alpha- \gamma)}\rp + 1 \bigr)} \\
&= 2^{1+n+p} \lp\frac{3+\alpha}{2(\alpha- \gamma)}\rp^{1 + \frac{n}{p}}
= c_2 .
\end{align*}
By summing over $k$ and applying \eqref{decomp1}, we obtain
\[
(\lambda^q - 1) \sum_j \lvert \widetilde{S}^-_j \rvert \leq c_2 \sum_k \lvert S^+_k \rvert + \delta^q \sum_{j} \lvert \widetilde{S}^-_j \rvert .
\]
Thus, we have
\[
\sum_j \lvert \widetilde{S}^-_j \rvert \leq \frac{ c_2}{\lambda^q - \delta^q - 1} \sum_k \lvert S^+_k \rvert 
\]
for every $\lambda^q > \delta^q + 1$.
By \eqref{start}, we obtain
\[
\lvert S(\lambda) \rvert \leq \frac{c_1 c_2}{\lambda^q - \delta^q - 1} \lvert S(\delta) \rvert 
\]
for every $\lambda^q > \delta^q + 1$.
By setting $a = 2c_1 c_2 + 1$ and replacing $\lambda^q$ and $\delta^q$ by $\lambda + a$ and $\lambda$, respectively, we have
\begin{equation}
\label{iteration}
\lvert S((\lambda + a)^\frac{1}{q}) \rvert \leq \frac{1}{2} \lvert S(\lambda^\frac{1}{q}) \rvert .
\end{equation}
Assume that $\lambda \geq a$. Then there exists an integer $N \in \mathbb{Z}_+$ such that $Na \leq \lambda < (N+1)a$.
By iterating~\eqref{iteration}, we arrive at
\begin{align*}
\lvert S(\lambda^\frac{1}{q}) \rvert 
&\leq \lvert S((Na)^\frac{1}{q}) \rvert \leq \frac{1}{2^{N-1}} \lvert S(a^\frac{1}{q}) \rvert \\
&\leq 2^{-\frac{\lambda}{a}+2} \lvert S(a^\frac{1}{q}) \rvert = 4 e^{-\frac{\lambda}{2c_1 c_2 +1} \log 2} \lvert S(a^\frac{1}{q}) \rvert .
\end{align*}
Applying \eqref{start}, \eqref{decomp1} and \eqref{subset}, we get
\begin{align*}
\lvert S(a^\frac{1}{q}) \rvert \leq \frac{c_1}{a-1} \int_{R^{+}_0(\gamma)} f_+^q = \frac{1}{4c_2}
\frac{1-\alpha}{1-\gamma} \frac{1}{\norm{f}^q} \int_{R^{+}_0(\gamma)} f_+^q .
\end{align*}
This implies
\[
\lvert S(\lambda) \rvert 
\leq e^{-2B(\lambda/\norm{f})^q } \frac{A}{\norm{f}^q} \int_{R^{+}_0(\gamma)} f_+^q 
\]
for every $\lambda \geq a^\frac{1}{q}$ with 
\[
A = \frac{1}{c_2} \frac{1-\alpha}{1-\gamma}
\quad\text{and}\quad 
B = \frac{1}{4} \frac{1-\alpha}{1-\gamma} \frac{\log 2}{2 c_1 c_2 + 1}.
\]

If $(x,t) \in S^+_0 \setminus S(\lambda)$, then there exists a sequence $\{S^+_l\}_{l\in\mathbb N}$
of subrectangles containing $(x,t)$ such that $c_{R_l} \leq \lambda $ and $\lvert S^+_l \rvert \to 0$ as $l \to \infty$.
This implies
\[
\dashint_{S^+_l} f^q_+ \leq \dashint_{S^+_l} (f-c_{R_l})^q_+ + \lambda^q \leq 1 + \lambda^q .
\]
The Lebesgue differentiation theorem (Lemma~\ref{LDT}) implies that
$f(x,t)^q_+ \leq 1 + \lambda^q$ for almost every $(x,t) \in S^+_0 \setminus S(\lambda)$.
It follows that
\[
\{ (x,t) \in S^+_0 : f(x,t)^q_+ > 1 + \lambda^q \} \subset S(\lambda)
\]
up to a set of measure zero.
Given $\lambda \geq 2^\frac{1}{q}$, we have $\lambda^q \geq 1 + \frac{\lambda^q}{2} $. We conclude that
\begin{align*}
\lvert S^+_0 \cap \{ f_+ > \lambda \} \rvert 
&= \lvert S^+_0 \cap \{ f_+^q > \lambda^q \} \rvert 
\leq \lvert S^+_0 \cap \{ f_+^q> 1 + \tfrac{\lambda^q}{2} \} \rvert 
\leq \lvert S(\lambda / 2^\frac{1}{q} ) \rvert \\
&\leq e^{-B(\lambda/\norm{f})^q} \frac{A}{\norm{f}^q} \int_{R^{+}_0(\gamma)} f_+^q 
\end{align*}
for every $\lambda \geq (2 a)^\frac{1}{q} = \bigl(4a(1-\gamma)/(1-\alpha)\bigr)^\frac{1}{q} \norm{f} = C \norm{f} $.
This completes the proof.
\end{proof}

As a corollary of Theorem~\ref{reshetnyak}, we obtain a more standard version of the parabolic John--Nirenberg inequality.

\begin{corollary}
\label{local_pJN}
Let $R \subset \mathbb{R}^{n+1}$ be a parabolic rectangle, $0 \leq \gamma < 1$, $\gamma < \alpha < 1$ and $0 < q \leq 1$.
Assume that $f \in\PBMO_{\gamma,q}^+(R)$ and let  $\norm{f}=\norm{f}_{\PBMO_{\gamma,q}^{+}(R)}$. 
Then there exist constants $c_{R}\in\mathbb R$, $A=A(n,p,q,\gamma,\alpha)$ and $B=(n,p,q,\gamma,\alpha)$ such that
\[
\lvert R^{+}(\alpha) \cap \{ (f-c_{R})_+ > \lambda \} \rvert 
\leq A e^{-B(\lambda/\norm{f})^q } \lvert R^{+}(\alpha) \rvert 
\]
and
\[
\lvert R^{-}(\alpha) \cap \{(f-c_{R})_- > \lambda \} \rvert 
\leq  A e^{-B(\lambda/\norm{f})^q } \lvert R^{-}(\alpha) \rvert 
\]
for every $\lambda > 0$.
\end{corollary}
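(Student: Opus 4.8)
The plan is to obtain Corollary~\ref{local_pJN} as an immediate consequence of Theorem~\ref{reshetnyak}: the only work is to replace the integral $\int_{R^{+}(\gamma)}(f-c_{R})_+^q$ on the right-hand side by a constant multiple of $\lvert R^{+}(\alpha)\rvert$, and to cover the range of small $\lambda$ by a harmless enlargement of the constant. Throughout I take $c_{R}$ to be the minimal constant provided by Lemma~\ref{PBMO_constant}, which is precisely the one appearing in Theorem~\ref{reshetnyak}; in particular the averaged bounds $\dashint_{R^{+}(\gamma)}(f-c_{R})_+^q\leq\norm{f}^q$ and $\dashint_{R^{-}(\gamma)}(f-c_{R})_-^q\leq\norm{f}^q$ are available (here $\delta=\gamma$).

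First I would treat the range $\lambda\geq C\norm{f}$, with $C$ as in Theorem~\ref{reshetnyak}. Since $R^{+}(\gamma)=Q(x,L)\times(t+\gamma L^p,t+L^p)$ and $R^{+}(\alpha)=Q(x,L)\times(t+\alpha L^p,t+L^p)$, one has the volume comparison $\lvert R^{+}(\gamma)\rvert=\frac{1-\gamma}{1-\alpha}\lvert R^{+}(\alpha)\rvert$, and hence $\int_{R^{+}(\gamma)}(f-c_{R})_+^q\leq\norm{f}^q\,\frac{1-\gamma}{1-\alpha}\,\lvert R^{+}(\alpha)\rvert$. Inserting this into the first estimate of Theorem~\ref{reshetnyak} gives
\[
\lvert R^{+}(\alpha)\cap\{(f-c_{R})_+>\lambda\}\rvert\leq A\,\frac{1-\gamma}{1-\alpha}\,e^{-B(\lambda/\norm{f})^q}\lvert R^{+}(\alpha)\rvert
\]
for all $\lambda\geq C\norm{f}$, which is the asserted inequality after absorbing the factor $(1-\gamma)/(1-\alpha)$ into $A$.

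For $0<\lambda<C\norm{f}$ I would use only the trivial bound $\lvert R^{+}(\alpha)\cap\{(f-c_{R})_+>\lambda\}\rvert\leq\lvert R^{+}(\alpha)\rvert$ together with $e^{-B(\lambda/\norm{f})^q}\geq e^{-BC^q}$ on this range; so it suffices to enlarge $A$ once more so that $Ae^{-BC^q}\geq1$. Replacing $A$ by the maximum of the two values produced above yields a single constant $A=A(n,p,q,\gamma,\alpha)$ for which, with the same $B=B(n,p,q,\gamma,\alpha)$, the estimate holds for every $\lambda>0$. The estimate for $(f-c_{R})_-$ on $R^{-}(\alpha)$ follows in exactly the same way from the second inequality of Theorem~\ref{reshetnyak} and the comparison $\lvert R^{-}(\gamma)\rvert=\frac{1-\gamma}{1-\alpha}\lvert R^{-}(\alpha)\rvert$, or alternatively by applying the already-proved first estimate to $-f(x,-t)$, which belongs to $\PBMO_{\gamma,q}^{+}$ with the same norm as $f$.

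I do not expect any genuine obstacle: the argument is a routine repackaging of constants. The only points meriting a line of care are confirming that the constant $c_{R}$ produced by Theorem~\ref{reshetnyak} is the same minimizer to which Lemma~\ref{PBMO_constant} applies, so that the averaged bounds above are legitimate, and the elementary bookkeeping of the factor $(1-\gamma)/(1-\alpha)$ from the volume comparison.
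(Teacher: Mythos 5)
Your proposal is correct and follows essentially the same route as the paper's proof: both apply Theorem~\ref{reshetnyak} directly, use the bound $\dashint_{R^{+}(\gamma)}(f-c_R)_+^q\leq\norm{f}^q$ coming from Lemma~\ref{PBMO_constant} together with the volume ratio $\lvert R^{+}(\gamma)\rvert=\tfrac{1-\gamma}{1-\alpha}\lvert R^{+}(\alpha)\rvert$ to absorb the integral, and handle the range $0<\lambda<C\norm{f}$ trivially by enlarging the constant (the paper shrinks $B$ to $C^{-q}$; you keep $B$ and enlarge $A$, which is equivalent bookkeeping). The second inequality is dispatched the same way in both treatments.
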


\begin{proof}
By Theorem~\ref{reshetnyak}, there exists a constant $C=C(n,p,q,\gamma,\alpha)$ such that
\begin{align*}
\lvert R^{+}(\alpha)\cap \{(f-c_R)_+ > \lambda \} \rvert 
&\leq e^{-B (\lambda/\norm{f})^q } \frac{A}{\norm{f}^q} \int_{R^{+}(\gamma)} (f-c_R)_+^q \\
&\leq e^{-B(\lambda/\norm{f})^q } \frac{1}{c_2} \frac{1-\alpha}{1-\gamma} \lvert R^{+}(\gamma) \rvert \\
& = \frac{1}{c_2} e^{-B(\lambda/\norm{f})^q} \lvert R^{+}(\alpha) \rvert 
\end{align*}
for every $\lambda \geq C \norm{f}$.
On the other hand, if $0<\lambda < C \norm{f}$, then
\[
\lvert R^{+}(\alpha) \cap \{ (f-c_R)_+ > \lambda \} \rvert 
\leq e^1 e^{-1} \lvert R^{+}(\alpha) \rvert \leq e^1 e^{- \frac{1}{C^q}(\lambda/\norm{f})^q } \lvert R^{+}(\alpha) \rvert .
\]
This proves the first inequality in the theorem. The second inequality follows similarly.
\end{proof}

Another consequence of Theorem~\ref{reshetnyak} is a weak reverse H\"older inequality for parabolic $\BMO$.
In particular, this implies that a function in parabolic $\BMO$ is locally integrable to any positive power.

\begin{corollary}
\label{reverseHolder}
Let $R \subset \mathbb{R}^{n+1}$ be a parabolic rectangle, $0 \leq \gamma < 1$, $\gamma < \alpha < 1$, $0 < q \leq 1$ and $q \leq r < \infty$.
Assume that $f \in\PBMO_{\gamma,q}^+(R)$ and let   $\norm{f}=\norm{f}_{\PBMO_{\gamma,q}^{+}(R)}$. 
Then there exist constants $c_{R}\in\mathbb R$ and $c=c(n,p,q,r,\gamma,\alpha)$ such that
\[
\int_{R^{+}(\alpha)} (f-c_{R})_+^r \leq c \norm{f}^{r-q} \int_{R^{+}(\gamma)} (f-c_{R})_+^q
\]
and
\[
\int_{R^{-}(\alpha)} (f-c_{R})_-^r \leq c \norm{f}^{r-q} \int_{R^{-}(\gamma)} (f-c_{R})_-^q .
\]
\end{corollary}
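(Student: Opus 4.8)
The plan is to derive the inequality by integrating the exponential distributional estimate of Theorem~\ref{reshetnyak} against the layer-cake (Cavalieri) formula. Since $R^{+}(\alpha)\subset R^{+}(\gamma)$ and $R^{-}(\alpha)\subset R^{-}(\gamma)$, the case $r=q$ holds trivially with $c=1$, and the case $\norm{f}=0$ is trivial as well, so I would assume $q<r<\infty$ and $\norm{f}>0$. It suffices to prove the first inequality; the second follows by the same argument (applied to $-f(x,-t)$, or directly from the symmetry of the two estimates in Theorem~\ref{reshetnyak}). Let $c_{R}$ be the constant provided by Theorem~\ref{reshetnyak}, put $g=(f-c_{R})_+$, and recall from Lemma~\ref{PBMO_constant} that $\int_{R^{+}(\gamma)}g^q\le\norm{f}^q\lvert R^{+}(\gamma)\rvert<\infty$. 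Starting from
\[
\int_{R^{+}(\alpha)}g^r = r\int_0^\infty \lambda^{r-1}\,\big\lvert R^{+}(\alpha)\cap\{g>\lambda\}\big\rvert\,d\lambda,
\]
I would split the integral at the threshold $\lambda=C\norm{f}$, where $C=C(n,p,q,\gamma,\alpha)$ is the constant from Theorem~\ref{reshetnyak}.

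On the range $0<\lambda<C\norm{f}$ the point is \emph{not} to use the crude bound $\big\lvert R^{+}(\alpha)\cap\{g>\lambda\}\big\rvert\le\lvert R^{+}(\alpha)\rvert$ --- which would leave a term of the shape $\norm{f}^r\lvert R^{+}(\alpha)\rvert$ that is not of the required form --- but instead Chebyshev's inequality together with $R^{+}(\alpha)\subset R^{+}(\gamma)$,
\[
\big\lvert R^{+}(\alpha)\cap\{g>\lambda\}\big\rvert\le \lambda^{-q}\int_{R^{+}(\alpha)}g^q\le \lambda^{-q}\int_{R^{+}(\gamma)}g^q .
\]
Because $r>q$, the integral $\int_0^{C\norm{f}}\lambda^{r-q-1}\,d\lambda$ converges at the origin, and this part contributes $\tfrac{r}{r-q}C^{\,r-q}\norm{f}^{\,r-q}\int_{R^{+}(\gamma)}g^q$.

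On the range $\lambda\ge C\norm{f}$ I would insert the estimate of Theorem~\ref{reshetnyak},
\[
\big\lvert R^{+}(\alpha)\cap\{g>\lambda\}\big\rvert\le e^{-B(\lambda/\norm{f})^q}\,\frac{A}{\norm{f}^q}\int_{R^{+}(\gamma)}g^q ,
\]
and change variables $\mu=\lambda/\norm{f}$: the factor $\norm{f}^{-q}$ together with the Jacobian and the power $\lambda^{r-1}$ produces $\norm{f}^{\,r-q}$ in front, while the remaining $\mu$-integral is $rA\int_C^\infty\mu^{r-1}e^{-B\mu^q}\,d\mu$, a finite constant depending only on $n,p,q,r,\gamma,\alpha$ since $B>0$. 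Adding the two contributions yields the claim with $c=\tfrac{r}{r-q}C^{\,r-q}+rA\int_C^\infty\mu^{r-1}e^{-B\mu^q}\,d\mu$; in particular the finiteness of $\int_{R^{+}(\alpha)}g^r$ comes out as a byproduct (alternatively one may run the argument first for $\min\{g,N\}$ and let $N\to\infty$ by monotone convergence). I do not expect a genuine obstacle here: the only delicate point is the small-$\lambda$ regime, where using Chebyshev's inequality rather than the trivial measure bound is precisely what makes the right-hand side proportional to $\int_{R^{+}(\gamma)}g^q$, and this is exactly where the hypothesis $q<r$ is needed.
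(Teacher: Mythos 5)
Your argument is correct and follows essentially the same route as the paper: Cavalieri's formula, a split of the $\lambda$-integral at the threshold $C\norm{f}$ from Theorem~\ref{reshetnyak}, and the exponential decay estimate for the tail via a change of variables. The only difference is in the low-$\lambda$ range, where the paper uses the elementary bound $\lambda^{r-1}\le(C\norm{f})^{r-q}\lambda^{q-1}$ and reassembles $\int_{R^+(\alpha)}(f-c_R)_+^q$ by Cavalieri again (which works uniformly for $r\ge q$), whereas you invoke Chebyshev's inequality and integrate $\lambda^{r-q-1}$ directly, which is equally valid but forces the separate treatment of the case $r=q$ that you correctly note is trivial.
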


\begin{proof}
Let
\[
E(\lambda) = R^{+}(\alpha) \cap \{ (f-c_R)_+ > \lambda \} .
\]
By using Cavalieri's principle, we get
\begin{align*}
\int_{R^+(\alpha)} (f-c_R)_+^r  &= r \int_0^\infty \lambda^{r-1} \lvert E(\lambda) \rvert \dla \\
&= r \int_0^{C \norm{f} } \lambda^{r-1} \lvert E(\lambda) \rvert \dla + r \int_{C \norm{f} }^\infty \lambda^{r-1} \lvert E(\lambda) \rvert \dla,
\end{align*}
where $C=C(n,p,q,\gamma,\alpha)$ is the constant in Theorem~\ref{reshetnyak}.
We estimate the obtained integrals separately.
For $0 \leq \lambda \leq C \norm{f} $, we have $\lambda^{r-1} \leq (C \norm{f})^{r-q} \lambda^{q-1}$, and thus
\begin{align*}
r \int_0^{C \norm{f} } \lambda^{r-1} \lvert E(\lambda) \rvert \dla &\leq r (C \norm{f})^{r-q} \int_0^\infty \lambda^{q-1} \lvert E(\lambda) \rvert \dla \\
&= \frac{r}{q} C^{r-q} \norm{f}^{r-q} \int_{R^+(\alpha)} (f-c_R)_+^q .
\end{align*}
For the second integral, we apply Theorem~\ref{reshetnyak} to obtain
\begin{align*}
r \int_{C \norm{f} }^\infty \lambda^{r-1} \lvert E(\lambda) \rvert \dla
 &\leq \frac{Ar}{\norm{f}^q} \int_{R^{+}_0(\gamma)} (f-c_R)_+^q 
 \int_0^\infty \lambda^{r-1} e^{-B (\lambda/\norm{f})^q } \dla \\
&= \frac{Ar}{\norm{f}^q} \int_{R^{+}_0(\gamma)} (f-c_R)_+^q \lp \frac{\norm{f}}{B^\frac{1}{q}} \rp^{r-1} \frac{\norm{f}}{B^\frac{1}{q} q} \int_0^\infty s^{\frac{r}{q}-1} e^{-s} \ds \\
&= \frac{Ar}{B^\frac{r}{q} q} \Gamma\bigl(\tfrac{r}{q}\bigr) \norm{f}^{r-q} \int_{R^{+}_0(\gamma)} (f-c_R)_+^q ,
\end{align*}
where we applied a change of variables $s = B\lambda^q/\norm{f}^q $.
This implies
\[
\int_{R^+(\alpha)} (f-c_R)_+^r \leq c \norm{f}^{r-q} \int_{R^+(\gamma)} (f-c_R)_+^q ,
\quad\text{with}\quad
c = \frac{r}{q} \Bigl( C^{r-q} + \frac{A}{B^\frac{r}{q}} \Gamma\bigl(\tfrac{r}{q}\bigr)\Bigr) .
\]
The second inequality of the theorem follows similarly.
\end{proof}

\section{Chaining arguments and the time lag}

Applying Corollary~\ref{local_pJN} with chaining arguments, we obtain a parabolic John--Nirenberg inequality, which allows us to change the time lag.

\begin{theorem}
\label{global_pJN}
Let $R \subset \mathbb{R}^{n+1}$ be a parabolic rectangle, $0 < \gamma <1$, $-1 < \rho \leq \gamma$, $-\rho < \sigma \leq \gamma$ and $0<q \leq 1$.
Assume that $f \in\PBMO_{\gamma,q}^+(R)$ and let   $\norm{f}=\norm{f}_{\PBMO_{\gamma,q}^{+}(R)}$. 
Then there exist constants $c \in \mathbb{R}$, $A=A(n,p,q,\gamma,\rho,\sigma)$ and $B=B(n,p,q,\gamma,\rho,\sigma)$
such that
\[
\lvert R^{+}(\rho) \cap \{ (f-c)_+ > \lambda \} \rvert \leq A e^{-B (\lambda/\norm{f})^q } \lvert R^{+}(\rho) \rvert
\]
and
\[
\lvert R^{-}(\sigma) \cap \{ (f-c)_- > \lambda \} \rvert \leq  A e^{-B (\lambda/\norm{f})^q } \lvert R^{-}(\sigma) \rvert 
\]
for every $\lambda > 0$.  
\end{theorem}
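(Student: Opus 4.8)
The idea is to reduce the estimate with the new (possibly negative) time lags $\rho,\sigma$ to the already-established estimate of Corollary~\ref{local_pJN}, which gives exponential decay on $R^+(\alpha)$ and $R^-(\alpha)$ for $0\le\gamma<\alpha<1$. The point is that $R^+(\rho)$ with $\rho\le\gamma$ is \emph{larger} in the time direction than $R^+(\gamma)$, so the decay estimate on the ``good'' upper part $R^+(\gamma)$ does not directly control the part of $R^+(\rho)$ lying below time level $t+\gamma L^p$. To cover that extra slab, I would tile it by a controlled number of smaller parabolic subrectangles $R_i\subset R$ whose \emph{upper} parts $R_i^+(\gamma)$ (in the $p$-geometry, so with time length $\sim\gamma\, l_x(R_i)^p$) are contained in $R^+(\rho)$, apply Corollary~\ref{local_pJN} to each $R_i$, and then chain the local constants $c_{R_i}$ together. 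The chaining is what upgrades ``each $R_i$ has its own good constant'' to ``there is one global constant $c$ that works on all of $R^+(\rho)$''.

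\textbf{Step 1: Chaining of the constants.} First I would fix a reference rectangle, say $R$ itself (or a fixed subrectangle near the top), with minimal constant $c_R=:c$. For any other subrectangle $R'$ appearing in the tiling, I would build a finite chain $R=R_0,R_1,\dots,R_N=R'$ of parabolic subrectangles of $R$, consecutive ones having comparable size and overlapping upper parts $R_j^+(\gamma)\cap R_{j+1}^+(\gamma)$ of measure comparable to $|R_j^+(\gamma)|$. On such an overlap, Corollary~\ref{reverseHolder} (or directly the weak-type bound of Corollary~\ref{local_pJN} integrated) gives $|c_{R_j}-c_{R_{j+1}}|\le c(n,p,q,\gamma)\,\norm{f}$, so telescoping yields $|c_{R'}-c|\le N\,c(n,p,q,\gamma)\,\norm{f}$, where $N=N(n,p,\gamma,\rho,\sigma)$ is bounded because the time lags are fixed. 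The key geometric fact to verify is that one \emph{can} connect any two subrectangles of $R$ by a uniformly bounded chain inside $R$ respecting the $p$-parabolic scaling; for a single fixed rectangle $R$ this is elementary since $R$ is a bounded ``John-type'' domain in the parabolic metric $d((x,t),(y,s))=\max\{\|x-y\|_\infty,|t-s|^{1/p}\}$.

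\textbf{Step 2: Tiling and the upper estimate.} I would choose finitely many parabolic subrectangles $R_i=R(x_i,t_i,L_i)$, $i=1,\dots,M$, with $M=M(n,p,\gamma,\rho)$, such that $R^+(\rho)\subset\bigcup_i R_i^+(\gamma)$ and each $R_i\subset R$. (The spatial direction needs no subdivision; only the time interval $(\rho L^p, L^p)$ must be covered by time-intervals of the form $(\gamma L_i^p, L_i^p)$ translated vertically, which is a one-dimensional covering using boundedly many scales since $\rho\ge -1$ is bounded away from $-\infty$.) Then
\[
|R^+(\rho)\cap\{(f-c)_+>\lambda\}|\le\sum_{i=1}^M|R_i^+(\gamma)\cap\{(f-c)_+>\lambda\}|.
\]
On each $R_i^+(\gamma)$ write $(f-c)_+\le(f-c_{R_i})_++|c-c_{R_i}|\le(f-c_{R_i})_++N c(n,p,q,\gamma)\norm{f}$ by Step~1, so $\{(f-c)_+>\lambda\}\cap R_i^+(\gamma)\subset\{(f-c_{R_i})_+>\lambda-N c\norm{f}\}$; when $\lambda> 2Nc\norm{f}$ Corollary~\ref{local_pJN} applied to $R_i$ bounds this by $A' e^{-B'(\lambda/(2\norm{f}))^q}|R_i^+(\gamma)|$, and since $|R_i^+(\gamma)|\le c(n,p,\gamma,\rho)|R^+(\rho)|$ we get the claim for large $\lambda$; for $0<\lambda\le 2Nc\norm{f}$ we absorb into the constant $A$ as in the proof of Corollary~\ref{local_pJN}. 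The lower estimate on $R^-(\sigma)$ follows by applying the upper estimate to $-f(x,-t)$, exactly as in the proofs of Theorem~\ref{reshetnyak} and Corollary~\ref{local_pJN}.

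\textbf{Main obstacle.} The delicate point is Step~1: making the chaining quantitative in the \emph{$p$-parabolic} geometry, where time scales like $L^p$ and the natural ``Poincaré-on-overlap'' estimate that controls $|c_{R_j}-c_{R_{j+1}}|$ must be deduced from the one-sided PBMO condition (with its intrinsic time lag $\gamma$) rather than from a two-sided BMO bound. Concretely, one must choose the chain so that consecutive upper parts $R_j^+(\gamma)$ and $R_{j+1}^+(\gamma)$ genuinely overlap in a set of comparable measure \emph{and} so that the good constant of one rectangle can be compared on that overlap, which forces the overlap to sit in a region where both $(f-c_{R_j})_+$ and $(f-c_{R_{j+1}})_+$ (not the negative parts) are controlled — this is possible because moving ``forward in time'' is the favorable direction in the $+$ class. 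Once the chain is set up with these properties, bounding $N$ by a constant depending only on $n,p,\gamma,\rho,\sigma$ is routine.
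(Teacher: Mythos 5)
Your overall strategy (tile $R^+(\rho)$ by small subrectangles, chain their optimal constants to a single reference constant, then apply Corollary~\ref{local_pJN} piecewise) is the same one the paper uses, but the chain you describe in Step~1 does not produce the estimate $|c_{R_j}-c_{R_{j+1}}|\lesssim\norm{f}$, and the reason is structural, not merely technical.

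You propose to compare $c_{R_j}$ and $c_{R_{j+1}}$ on an overlap of \emph{upper parts} $R_j^+(\gamma)\cap R_{j+1}^+(\gamma)$, and you assert (in the ``Main obstacle'' paragraph) that it suffices to control both $(f-c_{R_j})_+$ and $(f-c_{R_{j+1}})_+$ there. That is not enough. Writing $a=c_{R_j}$, $b=c_{R_{j+1}}$, the elementary inequality
\[
(a-b)_+^q \;\le\; \dashint_E (a-f)_+^q \;+\; \dashint_E (f-b)_+^q
\;=\; \dashint_E (f-a)_-^q \;+\; \dashint_E (f-b)_+^q
\]
shows you need the \emph{negative} part of $f-c_{R_j}$ controlled on the overlap $E$ (and the positive part of $f-c_{R_{j+1}}$), and the PBMO$^+$ condition only controls $(f-c_{R_j})_-$ on the \emph{lower} part $R_j^-(\gamma)$, not on $R_j^+(\gamma)$. (As a sanity check: if $f\equiv 0$ on $E$ then both $(f-a)_+$ and $(f-b)_+$ vanish for any $a,b\ge 0$, yet $|a-b|$ is arbitrary.) Corollary~\ref{reverseHolder} likewise only controls $(f-c_R)_+$ on $R^+$ and $(f-c_R)_-$ on $R^-$; it does not give a two-sided $L^1$-oscillation bound on any single region, so it cannot be ``integrated'' to yield $|c_{R_j}-c_{R_{j+1}}|\lesssim\norm{f}$ on an overlap of upper parts.

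The fix, which is exactly the paper's construction, is to run the chain so that consecutive parabolic rectangles are offset in time and the overlap is $P_{k-1}^-\cap P_k^+$ — the \emph{lower} part of the temporally-later rectangle meeting the \emph{upper} part of the temporally-earlier one. Then on that overlap one has $\dashint(c_{P_{k-1}}-f)_+^q=\dashint(f-c_{P_{k-1}})_-^q\lesssim\norm{f}^q$ from the PBMO condition for $P_{k-1}$ (because the overlap lies in $P_{k-1}^-$) and $\dashint(f-c_{P_k})_+^q\lesssim\norm{f}^q$ from the PBMO condition for $P_k$ (because the overlap lies in $P_k^+$), which together bound $(c_{P_{k-1}}-c_{P_k})_+^q$. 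Note this controls only a \emph{one-sided} difference of constants, which turns out to be all one needs; telescoping gives $(c_{R_{i,j}}-c_{\mathfrak R})_+\le C\norm{f}$. The chain therefore has to travel backward in time from each tile near the top of $R$ to a single central rectangle $\mathfrak R$, combining spatial steps with temporal drops of size $\sim(1+\alpha)l^p$ per step (and a second family of chains travelling forward in time from $R^-(\sigma)$ to the same $\mathfrak R$). Once this offset chain structure is in place, your Step~2 (tiling, splitting $\{(f-c)_+>\lambda\}$ into $\{(f-c_{R_{i,j}})_+>\lambda/2\}\cup\{(c_{R_{i,j}}-c)_+>\lambda/2\}$, and absorbing the small-$\lambda$ range) goes through essentially as you outline.

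A small secondary point: Corollary~\ref{local_pJN} requires $\gamma<\alpha<1$, so the tiles to which it is applied must use an intermediate parameter $\alpha$, not $\gamma$ itself; the paper introduces such an $\alpha$ and partitions $R^+(\rho)$ so finely that each tile's $\alpha$-upper part fits inside it, with the number of tiles controlled by $n,p,\gamma,\alpha,\rho,\sigma$. This is routine once the chaining mechanism above is correct.
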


\begin{proof}
Let $R_0 = R$.
Without loss of generality, we may assume that the center of $R_0$ is the origin.
Since $f \in\PBMO^+_{\gamma,q}(R_0)$, Corollary~\ref{local_pJN} holds for any parabolic subrectangle of $R_0$ and for any $\gamma < \alpha <1$.
Let  $m$ be the smallest integer with
\[
m \geq \log_2\lp \frac{1+\alpha}{1-\alpha} \rp + \frac{1}{p-1} \lp 2 + \log_2 \frac{1+\alpha}{\rho+\sigma} \rp + 2 .
\]
Then there exists $0 \leq \varepsilon < 1$ such that
\[ 
m = \log_2\lp \frac{1+\alpha}{1-\alpha} \rp +  \frac{1}{p-1} \lp 2 + \log_2 \frac{1+\alpha}{\rho+\sigma} \rp + 2 + \varepsilon .
\]
We partition $R^+_0(\rho) = Q(0, L) \times (\rho L^p, L^p)$ by dividing each of its spatial edges into $2^m$ equally long intervals and the time interval into 
$\lceil (1-\rho)2^{mp}/(1-\alpha)\rceil$ equally long intervals.
Denote the obtained rectangles by $U^+_{i,j}$ with $i \in \{1,\dots,2^{mn}\}$ and  $j \in \{1,\dots,\lceil (1-\rho)2^{mp}/(1-\alpha)\rceil\}$.
The spatial side length of $U^+_{i,j}$ is $l = l_x(U^+_{i,j}) =L/2^m$
and the time length is
\[
l_t(U^+_{i,j}) = \frac{(1-\rho)L^p}{\lceil (1-\rho)2^{mp}/(1-\alpha)\rceil} .
\]
For every $U^+_{i,j}$, there exists a unique rectangle $R_{i,j}$ that has the same top as $U^+_{i,j}$.
Our aim is to construct a chain from each $U^+_{i,j}$ to a central rectangle which is of the same form as $R_{i,j}$ and is contained in $R_0$. 
This central rectangle will be specified later.
First, we construct a chain with respect to the spatial variable.
Fix $U^+_{i,j}$.
Let $P_0 = R_{i,j}$ and
\[
P_0^+ = R^+_{i,j}(\alpha) = Q_i \times (t_j - (1-\alpha)l^p, t_j).
\]
We construct a chain of cubes from $Q_i$ to the central cube $Q(0,l)$.
Let $Q'_0 = Q_i = Q(x_i, l)$ and set
\[
Q'_k = Q'_{k-1} - \frac{x_i}{\abs{x_i}} \frac{\theta l}{2}, 
\quad k \in \{0,\dots,N_i\} ,
\]
where $1 \leq \theta \leq \sqrt{n}$ depends on the angle between $x_i$ and the spatial axes and is chosen such that the center of $Q_k$ is on the boundary of $Q_{k-1}$ (see Fig.~\ref{fig:spat_decomp}).
We have
\[
\frac{1}{2^n} \leq \frac{\abs{ Q_k \cap Q_{k-1} }}{\abs{Q_k}} \leq \frac{1}{2}, 
\quad k \in \{0,\dots,N_i\} ,
\]
and $\abs{x_i} = \frac{\theta}{2} (L - bl)$,
where $b \in \{1, \dots, 2^m\}$ depends on the distance of $Q_i$ to the center of $Q_0 = Q(0,L)$.
The number of cubes in the spatial chain $\{Q'_k\}_{k=0}^{N_i}$ is
\[
N_i + 1 = \frac{\abs{x_i}}{\frac{\theta}{2} l} + 1 = \frac{L}{l} - b + 1 .
\]

\begin{figure}[t!]
\centering
\includegraphics[trim=0cm 1.5cm 0cm 0cm, width=1\textwidth]{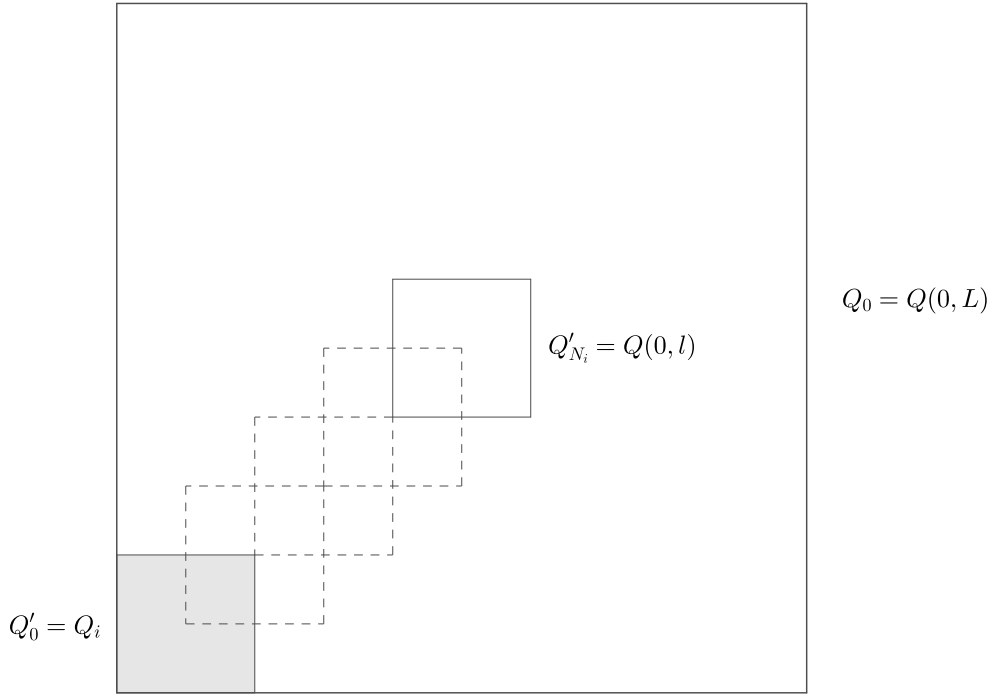}
\caption{Sketch of a spatial chain of cubes $Q'_k$. }
\label{fig:spat_decomp}
\end{figure}

Next, we also take the time variable into consideration in the construction of the chain.
Let
\[
P^+_k = Q'_k \times ( t_j - (1-\alpha)l^p - k(1+\alpha)l^p, t_j - k(1+\alpha)l^p )
\]
and $P^-_k = P^+_k - (0, (1+\alpha)l^p)$,
for $k \in \{ 0, \dots, N_i \}$,
be the upper and the lower parts of a parabolic rectangle respectively.
These will form a chain of parabolic rectangles from $U^+_{i,j}$ to the eventual central rectangle.
Observe that every rectangle $P_{N_i}$ coincides spatially for all pairs $(i,j)$.
Consider $j=1$ and such $i$ that the boundary of $Q_i$ intersects the boundary of $Q_0$.
For such a cube $Q_i$, we have $b=1$, and thus $N = N_i = \frac{L}{l} - 1$.
In the time variable, we travel from $t_1$ the distance
\[
(N+1)(1+\alpha)l^p + (1-\alpha)l^p = (1+\alpha) L l^{p-1} + (1-\alpha) l^p .
\]
We show that the lower part of the final rectangle $P^-_N$ is contained in $R_0$.
To this end, we subtract the time length of $U^+_{i,1}$ from the distance above and observe that it is less than half of the time length of $R_0 \setminus(R_0^+(\rho) \cup R_0^-(\sigma))$.
This follows from the computation
\begin{align*}
&(1+\alpha) L l^{p-1} + (1-\alpha) l^p - \frac{(1-\rho)L^p}{\lceil(1-\rho)2^{mp}/(1-\alpha)\rceil}\\
&\qquad= \lp \frac{1+\alpha}{2^{m(p-1)}} + \frac{1-\alpha}{2^{mp}} - \frac{1-\rho}{\lceil (1-\rho)2^{mp}/(1-\alpha) \rceil} \rp L^p \\
&\qquad\leq \lp \frac{1+\alpha}{2^{m(p-1)}} + \frac{1-\alpha}{2^{mp}} - \frac{1-\rho}{2\frac{(1-\rho)2^{mp}}{1-\alpha}} \rp L^p  \\
&\qquad= \lp \frac{1+\alpha}{2^{m(p-1)}} + \frac{1-\alpha}{2^{mp+1}} \rp L^p 
\leq 2 \frac{1+\alpha}{2^{m(p-1)}} L^p \leq \frac{\rho+\sigma}{2} L^p ,
\end{align*}
since 
\[
m \geq \frac{1}{p-1} \lp 2 + \log_2 \frac{1+\alpha}{\rho+\sigma} \rp.
\]
This implies that $P^-_N \subset R_0^+(\rho - (\rho+\sigma)/2)$.
Denote this rectangle $P_N$ by $\mathfrak{R} = \mathfrak{R}_\rho$.
This is the central rectangle where all chains will eventually end.

Let $j=1$ and assume that $i$ is arbitrary. We extend the chain $\{P_k\}_{k=0}^{N_i}$ by $N - N_i$ rectangles into the negative time direction such that the final rectangle coincides with the central rectangle $\mathfrak{R}$ (see Fig.~\ref{fig:chain_continued}). 
More precisely, we consider $Q'_{k+1} = Q'_{N_i}$, 
\[
P^+_{k+1} = P^+_{k} - (0, (1+\alpha)l^p) 
\quad\text{and}\quad
P^-_{k+1} = P^+_{k+1} - (0, (1+\alpha)l^p)
\]
for $k \in \{N_i, \dots, N-1\}$. 
For every $j \in \{2,\dots,\lceil (1-\rho)2^{mp}/(1-\alpha)\rceil \}$, we consider a similar extension of the chain.
The final rectangles of the chains coincide for fixed $j$ and for every $i$.
Moreover, every chain is of the same length $N+1$, and it holds that
\[
\frac{1}{2^n} \leq \eta = \frac{\lvert P_k^+ \cap P_{k-1}^- \rvert}{\lvert P_k^+ \rvert} \leq 1 .
\]

\begin{figure*}[t!]
    \centering
    \begin{subfigure}[t]{0.49\textwidth}
        \centering
        \includegraphics[trim=3cm -0.5cm 1.5cm -1cm, width=1\textwidth]{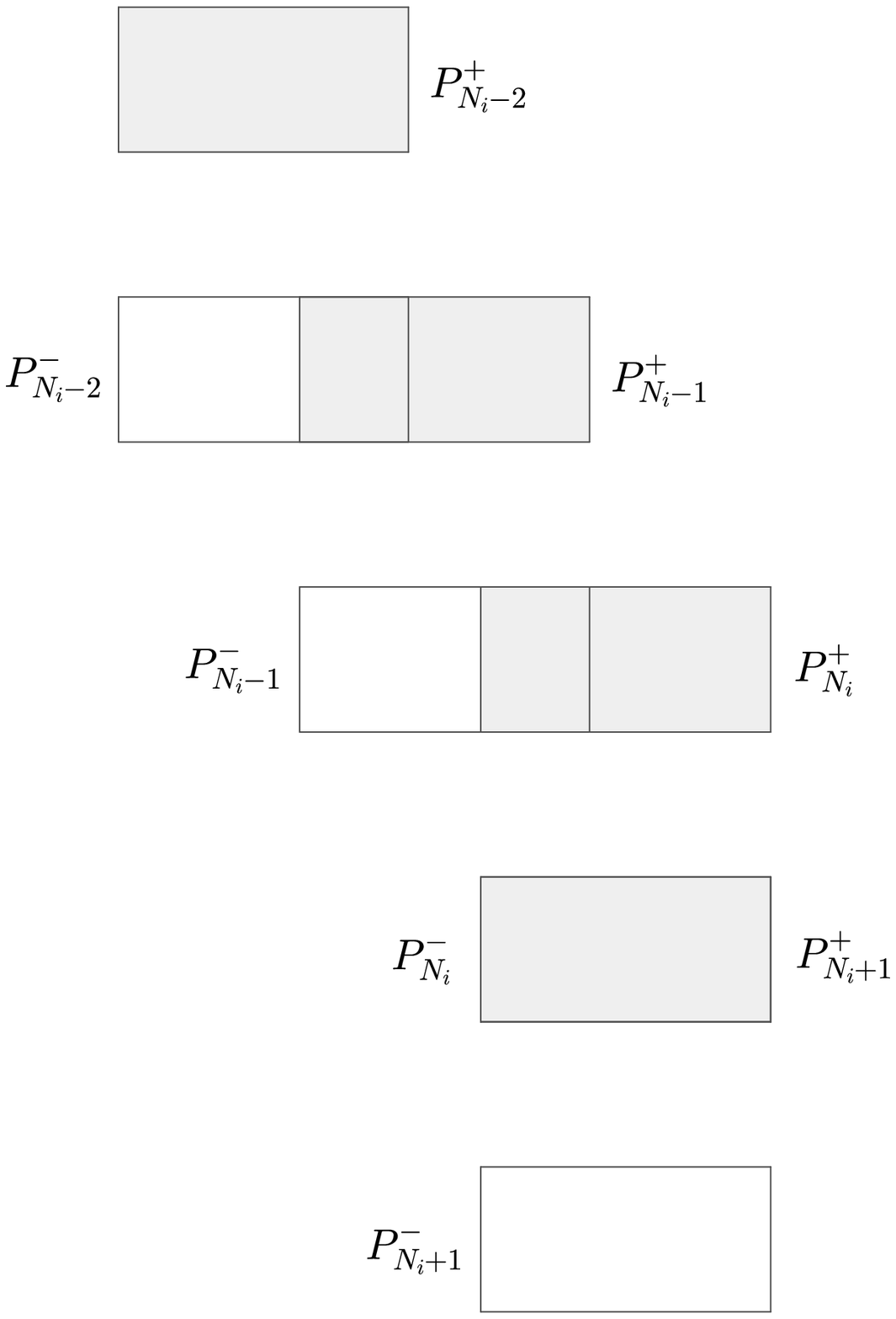}
        \caption{Continued chain (after $N_i$th step).}
        \label{fig:chain_continued}
    \end{subfigure}%
    ~
    \begin{subfigure}[t]{0.49\textwidth}
        \centering
        \includegraphics[trim=1.5cm -1cm 2cm 0cm, width=1\textwidth]{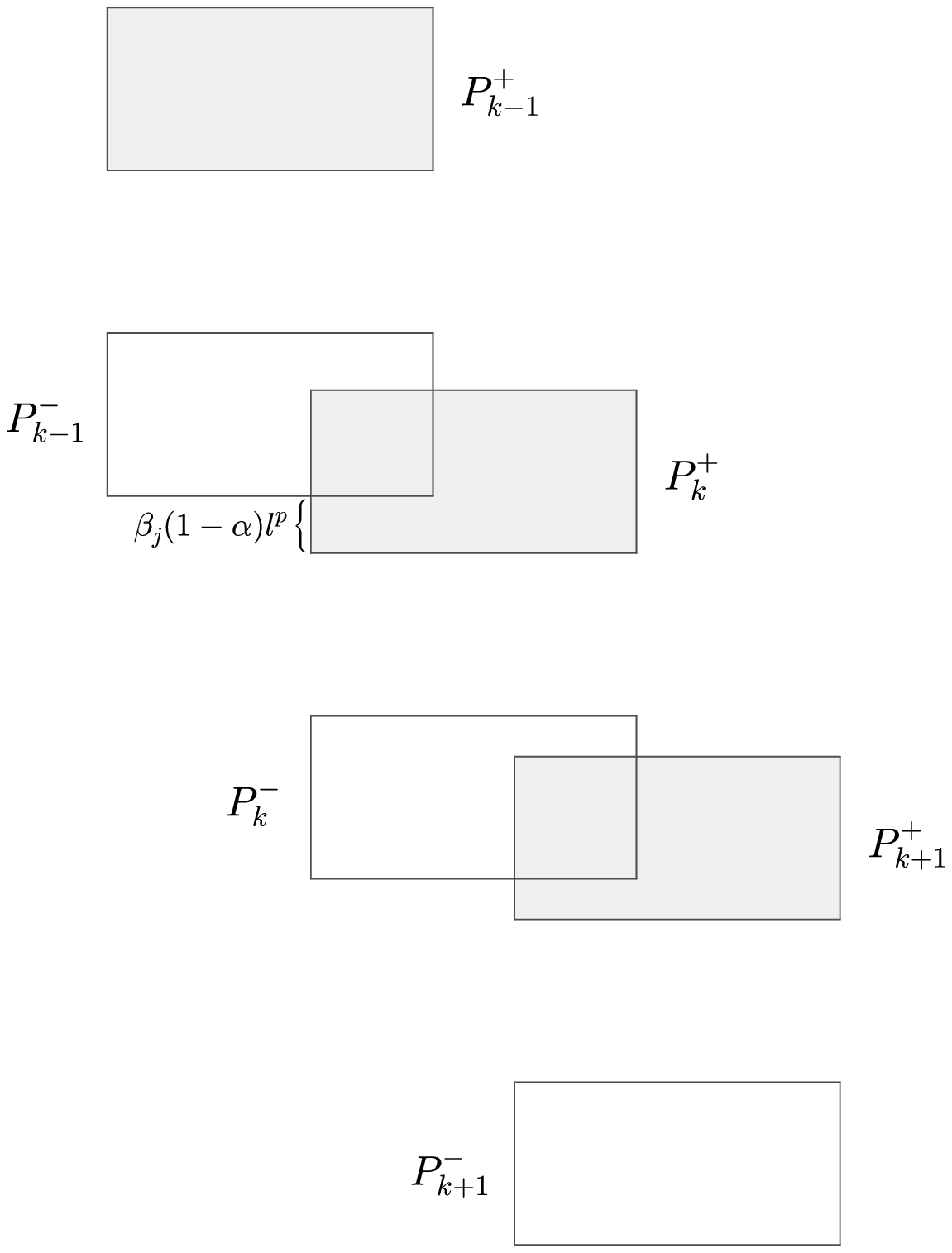}
        \caption{Modified chain.}
        \label{fig:chain_modified}
    \end{subfigure}
    \caption{Schematic pictures of constructed chains.}
\end{figure*}

Then we consider an index $j \in \{2,\dots,\lceil(1-\rho)2^{mp}/(1-\alpha)\rceil \}$ related to the time variable.
The time distance between the current ends of the chains for pairs $(i,j)$ and $(i,1)$ is
\[
(j-1) \frac{(1-\rho) L^p}{\lceil (1-\rho)2^{mp}/(1-\alpha)\rceil} .
\]
Our objective is to have the final rectangle of the continued chain for $(i,j)$ to coincide with the end of the chain for $(i,1)$, that is, with the central rectangle $\mathfrak{R}$.
To achieve this, we modify $2^{m-1}$ intersections of $P^+_k$ and $P^-_{k+1}$ by shifting $P_k$ and also add a chain of $M_j$ rectangles traveling to the negative time direction into the chain $\{P_k\}_{k=0}^{N}$.
We shift every $P_k, k \in \{ 1, \dots, 2^{m-1} \}$, by a $\beta_j$-portion of their temporal length more than the previous rectangle was shifted, that is, we move each $P_k$ into the negative time direction a distance of $k \beta_j (1-\alpha) l^p$ (see Fig.~\ref{fig:chain_modified}).
The values of $M_j \in \mathbb{N}$ and $0\leq \beta_j <1$ will be chosen later.
In other words, modify the definitions of $P^+_k$ for $ k \in \{ 1, \dots, 2^{m-1} \}$ by
\[
P^+_k = Q'_k \times ( t_j - (1-\alpha)l^p - k( 1+\alpha + \beta_j (1-\alpha)) l^p, t_j - k( 1+\alpha + \beta_j (1-\alpha)) l^p) ,
\]
and then add $M_j$ rectangles defined by
\[
P^+_{k+1} = P^+_{k} - (0, (1+\alpha)l^p) 
\quad\text{and}\quad
P^-_{k+1} = P^+_{k+1} - (0, (1+\alpha)l^p) 
\]
for $k \in \{N,\dots, M_j-1\}$.
Note that there exists $1 \leq \tau < 2$ such that 
\[
\tau \frac{(1-\rho)2^{mp}}{1-\alpha} 
=\left\lceil\frac{(1-\rho)2^{mp}}{(1-\alpha)}\right\rceil.
\]
We would like to find such $0\leq \beta_j <1$ and $M_j \in \mathbb{N}$ that 
\[
(j-1) \frac{(1-\rho) L^p}{\lceil (1-\rho)2^{mp}/(1-\alpha)\rceil} - M_j \frac{(1+\alpha) L^p}{2^{mp}} 
= 2^{m-1} \beta_j \frac{(1-\alpha) L^p}{2^{mp}} ,
\]
which is equivalent with
\[
(j-1)\tau^{-1} (1-\alpha) - M_j (1+\alpha) = 2^{m-1} \beta_j (1-\alpha) .
\]
With this choice all final rectangles coincide.
Choose $M_j \in \mathbb{N}$ such that
\[
M_j (1+\alpha) \leq (j-1) \tau^{-1} (1-\alpha) < (M_j + 1)(1+\alpha) ,
\]
that is,
\[
0 \leq \xi = (j-1) \tau^{-1} (1-\alpha) - M_j (1+\alpha) < 1+\alpha 
\]
and
\[
\frac{(j-1)(1-\alpha)}{2 (1+\alpha)} -1 \leq \frac{(j-1)(1-\alpha)}{\tau (1+\alpha)} -1 < M_j \leq \frac{(j-1)(1-\alpha)}{\tau (1+\alpha)} \leq \frac{(j-1)(1-\alpha)}{1+\alpha} .
\]
By choosing $0\leq \beta_j <1$ such that
\[
\xi = 2^{m-1} \beta_j (1-\alpha) = 2^{\frac{2}{p-1} + 1 + \varepsilon} \lp \frac{1+\alpha}{\rho+\sigma} \rp^\frac{1}{p-1} \beta_j (1+\alpha) ,
\]
we have
\[
\beta_j = 2^{-\frac{2}{p-1} - 1 - \varepsilon} \lp \frac{\rho+\sigma}{1+\alpha} \rp^\frac{1}{p-1} \frac{\xi}{1+\alpha} .
\]
Observe that $0\leq \beta_j \leq \frac{1}{2}$ for every $j$.
For measures of the intersections of the modified rectangles, it holds that
\[
\frac{1}{2^{n+1}} \leq \frac{\lvert P_k^+ \cap P_{k-1}^- \rvert}{\lvert P_k^+ \rvert} = \eta (1-\beta_j) \leq 1 
\]
for $ k \in \{ 1, \dots, 2^{m-1} \}$, and thus
\[
\frac{1}{2^{n+1}} \leq \tilde{\eta}_j = \frac{\lvert P_k^+ \cap P_{k-1}^- \rvert}{\lvert P_k^+ \rvert} \leq 1 
\]
for every $k \in \{1,\dots, M_j\}$.
Fix $U^+_{i,j}$. 
We conclude that
\begin{align*}
(c_{R_{i,j}} - c_{\mathfrak{R}} )^q_+ &= (c_{P_0} - c_{P_{N+M_j}} )^q_+  \leq \sum_{k=1}^{N+M_j} (c_{P_{k-1}} - c_{P_{k}} )^q_+ \\
&= \sum_{k=1}^{N+M_j} \dashint_{P_{k-1}^- \cap P_k^+} (c_{P_{k-1}} - c_{P_{k}} )^q_+ \\
&\leq \sum_{k=1}^{N+M_j} \lp \dashint_{P_{k-1}^- \cap P_k^+} (c_{P_{k-1}} - f )^q_+ + \dashint_{P_{k-1}^- \cap P_k^+} (f- c_{P_{k}} )^q_+  \rp \\
&\leq \sum_{k=1}^{N+M_j} \frac{1}{\tilde{\eta}_j} \lp \dashint_{P_{k-1}^-} (f - c_{P_{k-1}})^q_- + \dashint_{P_k^+} (f- c_{P_{k}} )^q_+  \rp \\
&\leq 2^{n+1}  \sum_{k=0}^{N+M_j} \lp \dashint_{P_{k}^-} (f - c_{P_{k}} )^r_- + \dashint_{P_k^+} (f- c_{P_{k}} )^r_+ \rp \\
&\leq 2^{n+1} (N+1+M_j) \norm{f}^q ,
\end{align*}
where
\begin{align*}
N+1+M_j &= \frac{L}{l}+M_j \leq 2^m + (j-1) \frac{1-\alpha}{1+\alpha} \\
&\leq 2^m + \frac{(1-\rho)2^{mp}}{1-\alpha} \frac{1-\alpha}{1+\alpha} 
\leq 2^m + 2^{mp+1} \leq 2^{mp+2} \\
&\leq 2^{\frac{2p}{p-1} + 3p +2} \lp \frac{1+\alpha}{\rho+\sigma} \rp^\frac{p}{p-1} \lp \frac{1+\alpha}{1-\alpha} \rp^p
\end{align*}
for every $j$.
Hence, we obtain
\[
(c_{R_{i,j}} - c_{\mathfrak{R}} )_+ \leq C \norm{f} 
\quad\text{with}\quad
C^q = 2^{\frac{2p}{p-1} + 3p + n +3} \lp \frac{1+\alpha}{\rho+\sigma} \rp^\frac{p}{p-1} \lp \frac{1+\alpha}{1-\alpha} \rp^p.
\]

We observe that
\begin{align*}
&\lvert R^{+}_0(\rho) \cap \{ (f-c_{\mathfrak{R}})_+ > \lambda \} \rvert 
\leq \sum_{i,j} \lvert R^+_{i,j}(\alpha) \cap \{ (f-c_{\mathfrak{R}})_+ > \lambda \} \rvert \\
&\qquad\leq \sum_{i,j} \lvert R^+_{i,j}(\alpha) \cap \{ (f-c_{R_{i,j}})_+ > \tfrac{\lambda}{2} \} \rvert 
 + \sum_{i,j} \lvert R^+_{i,j}(\alpha) \cap \{ (c_{R_{i,j}}-c_{\mathfrak{R}})_+ > \tfrac{\lambda}{2} \} \rvert .
\end{align*}
The first sum of the right-hand side can be estimated by Corollary~\ref{local_pJN} as follows
\begin{align*}
\sum_{i,j} \lvert R^+_{i,j}(\alpha) \cap \{ (f-c_{R_{i,j}})_+ > \tfrac{\lambda}{2} \} \rvert 
&\leq \sum_{i,j} A e^{-B(\lambda/(2\,\norm{f}))^q } \lvert R^+_{i,j}(\alpha) \rvert \\
&\leq A e^{-2^{-q}B(\lambda/\norm{f})^q } \sum_{i,j} \tau \lvert U^+_{i,j} \rvert \\
&\leq 2 A e^{-2^{-q}B(\lambda/\norm{f})^q } \lvert R^+_0(\rho) \rvert .
\end{align*}
To estimate the second sum above, assume that $\lambda \geq 2 C \norm{f}$.
This implies that
\[
\lvert R^+_{i,j}(\alpha) \cap \{ (c_{R_{i,j}}-c_{\mathfrak{R}})_+ > \tfrac{\lambda}{2} \} \rvert 
\leq \lvert R^+_{i,j}(\alpha) \cap \{ C \norm{f} > \tfrac{\lambda}{2} \} \rvert = 0
\]
for every $i,j$.
Thus
\[
\lvert R^{+}_0(\rho) \cap \{ (f-c_{\mathfrak{R}})_+ > \lambda \} \rvert 
\leq 2 A e^{-2^{-q}B (\lambda/\norm{f})^q } \lvert R^+_0(\rho) \rvert 
\]
for every $\lambda \geq 2 C \norm{f}$.
For $0<\lambda < 2 C \norm{f}$, we have
\[
\lvert R^{+}_0(\rho) \cap \{ (f-c_{\mathfrak{R}})_+ > \lambda \} \rvert 
\leq e^1 e^{-1} \lvert R^+_0(\rho) \rvert \leq e^1 e^{-(2C)^{-q}(\lambda/\norm{f})^q } \lvert R^+_0(\rho) \rvert.
\]

We can apply a similar chaining argument in the reverse time direction for $R_0^-(\sigma)$ with the exception that we also extend (and modify if needed) every chain such that the central rectangle $\mathfrak{R}_\sigma$ coincides with $\mathfrak{R}=\mathfrak{R}_\rho$.
A rough upper bound for the number of rectangles needed for the additional extension is given by
\[
\left\lceil \frac{(\rho+\sigma)L^p}{(1+\alpha)l^p} \right\rceil = \left\lceil \frac{\rho+\sigma}{1+\alpha} 2^{mp} \right\rceil \leq 2^{mp+1} .
\]
Thus, the constant $C$ above is two times larger in this case. 
This proves the second inequality of the theorem.
\end{proof}

The next corollary of Theorem~\ref{global_pJN} tells that the spaces $\PBMO_{\gamma,\delta,q}^{+}$ coincide for every $-1<\gamma<1$, $-\gamma < \delta <1$ and $0<q<\infty$.

\begin{corollary}
\label{PBMOequivalent}
Let $\Omega_T \subset \mathbb{R}^{n+1}$ be a space-time cylinder, $0<\gamma<1$, $-1 < \rho \leq \gamma$, $-\rho < \sigma \leq \gamma$ and $0<q \leq r<\infty$.
Then there exist constants $c_1=c_1(n,p,q,r,\gamma,\rho,\sigma)$ and $c_2=c_2(n,p,q,r,\gamma,\rho,\sigma)$ such that 
\[
c_1 \norm{f}_{\PBMO_{\gamma,q}^{+}(\Omega_T)} \leq \norm{f}_{\PBMO_{\rho,\sigma,r}^{+}(\Omega_T)} \leq c_2 \norm{f}_{\PBMO_{\gamma,q}^{+}(\Omega_T)} .
\]
\end{corollary}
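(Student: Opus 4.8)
The plan is to prove the two displayed inequalities separately. The left one, $c_1\norm{f}_{\PBMO_{\gamma,q}^{+}(\Omega_T)}\le\norm{f}_{\PBMO_{\rho,\sigma,r}^{+}(\Omega_T)}$, is elementary: it rests only on H\"older's inequality and on the fact that, since $\rho\le\gamma$ and $\sigma\le\gamma$, the parabolic parts with lag $\gamma$ sit inside those with lags $\rho$ and $\sigma$. The right one, $\norm{f}_{\PBMO_{\rho,\sigma,r}^{+}(\Omega_T)}\le c_2\norm{f}_{\PBMO_{\gamma,q}^{+}(\Omega_T)}$, is the substantial direction and will be obtained by feeding Theorem~\ref{global_pJN} into a Cavalieri integration, in the spirit of the proof of Corollary~\ref{reverseHolder}.

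For the left inequality I would assume the right-hand side is finite, so $f\in L^r_{\mathrm{loc}}(\Omega_T)\subset L^q_{\mathrm{loc}}(\Omega_T)$. Fix a parabolic rectangle $R\subset\Omega_T$, take the minimizing constant $c_R$ from Lemma~\ref{PBMO_constant} for the $\PBMO_{\rho,\sigma,r}^{+}$ norm, and use the inclusions $R^{+}(\gamma)\subset R^{+}(\rho)$ and $R^{-}(\gamma)\subset R^{-}(\sigma)$ together with the explicit volume ratios $\abs{R^{+}(\rho)}/\abs{R^{+}(\gamma)}=(1-\rho)/(1-\gamma)$ and $\abs{R^{-}(\sigma)}/\abs{R^{-}(\gamma)}=(1-\sigma)/(1-\gamma)$ and the power mean inequality (valid since $q\le r$) to get
\[
\dashint_{R^{+}(\gamma)}(f-c_R)_+^q\le\lp\tfrac{1-\rho}{1-\gamma}\dashint_{R^{+}(\rho)}(f-c_R)_+^r\rp^{q/r},
\]
with the analogous bound for the negative part on $R^{-}(\gamma)$ versus $R^{-}(\sigma)$. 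Adding these, raising to the power $1/q$, absorbing numerical constants by $a^{q/r}+b^{q/r}\le2(a+b)^{q/r}$, and finally taking the infimum over $c$ and the supremum over $R\subset\Omega_T$ (using Lemma~\ref{PBMO_constant} once more on the right) yields the left inequality with a constant depending only on $q,r,\gamma,\rho,\sigma$. No John--Nirenberg input enters here.

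For the right inequality I would first reduce to $0<q\le1$: if $q>1$, H\"older's inequality in $R^{+}(\gamma)$ and $R^{-}(\gamma)$ gives $\norm{f}_{\PBMO_{\gamma,1}^{+}(\Omega_T)}\le2\norm{f}_{\PBMO_{\gamma,q}^{+}(\Omega_T)}$, so $q$ may be replaced by $1$ (the condition $q\le r$ survives). With $0<q\le1$, for every parabolic rectangle $R\subset\Omega_T$ one has $f\in\PBMO_{\gamma,q}^{+}(R)$ with $\norm{f}_{\PBMO_{\gamma,q}^{+}(R)}\le\norm{f}_{\PBMO_{\gamma,q}^{+}(\Omega_T)}$, since every parabolic rectangle contained in $R$ is contained in $\Omega_T$; write $\norm{f}$ for the latter quantity. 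The hypotheses of Theorem~\ref{global_pJN} on $\gamma,\rho,\sigma,q$ are exactly the present ones, so applying it on $R$ provides $c_R\in\mathbb R$ and constants $A,B>0$ with exponential good-$\lambda$ bounds for $(f-c_R)_+$ on $R^{+}(\rho)$ and for $(f-c_R)_-$ on $R^{-}(\sigma)$. Multiplying by $r\lambda^{r-1}$, integrating over $\lambda>0$, and substituting $s=B(\lambda/\norm{f})^q$ (the same computation as in Corollary~\ref{reverseHolder}) converts these into
\[
\dashint_{R^{+}(\rho)}(f-c_R)_+^r+\dashint_{R^{-}(\sigma)}(f-c_R)_-^r\le\frac{2rA}{qB^{r/q}}\,\Gamma(r/q)\,\norm{f}^r .
\]
Taking the power $1/r$, bounding the infimum over $c$ by its value at $c_R$, and taking the supremum over $R\subset\Omega_T$ gives the right inequality, with a constant depending only on $n,p,q,r,\gamma,\rho,\sigma$ (plus the extra factor from the reduction when $q>1$).

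The main, and essentially only, obstacle is already contained in Theorem~\ref{global_pJN}: the genuinely hard part, namely the chaining construction that lowers the time lag from $\gamma$ to the smaller values $\rho$ and $\sigma$, is packaged there, so the present corollary amounts to combining it with routine H\"older and Cavalieri estimates. The two points that still need attention are the reduction from an arbitrary $q\in(0,\infty)$ to $q\le1$ so that Theorem~\ref{global_pJN} is applicable, and the bookkeeping ensuring that the $\Gamma(r/q)$ factor together with the powers of the John--Nirenberg constants $A,B$ collapse into a single constant with the claimed parameter dependence.
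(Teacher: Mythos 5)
Your proposal is correct and follows essentially the same route as the paper's own proof: the left-hand inequality is H\"older's inequality combined with the inclusions $R^{+}(\gamma)\subset R^{+}(\rho)$ and $R^{-}(\gamma)\subset R^{-}(\sigma)$ and the corresponding volume ratios, and the right-hand inequality is Theorem~\ref{global_pJN} fed into Cavalieri's principle after reducing to $0<q\le1$. The only cosmetic difference is in the reduction: you invoke H\"older directly to pass to $q=1$, while the paper reuses its already-established left-hand inequality (which is itself proved by H\"older), so the two reductions amount to the same thing.
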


\begin{proof}
Let $R$ be a parabolic subrectangle of $\Omega_T$.
By H\"older's inequality, we have
\begin{align*}
&\lp \dashint_{R^+(\gamma)} (f-c_R)_+^q + \dashint_{R^-(\gamma)} (f-c_R)_-^q  \rp^\frac{1}{q} \\
&\qquad\leq \max\{1, 2^{\frac{1}{q}-1} \} \Biggl( \biggl( \dashint_{R^+(\gamma)} (f-c_R)_+^q  \biggr)^\frac{1}{q} + \biggl( \dashint_{R^-(\gamma)} (f-c_R)_-^q \biggr)^\frac{1}{q} \Biggr) \\
&\qquad\leq \max\{1, 2^{\frac{1}{q}-1} \} \Biggl( \biggl( \dashint_{R^+(\gamma)} (f-c_R)_+^r \biggr)^\frac{1}{r} + \biggl( \dashint_{R^-(\gamma)} (f-c_R)_-^r \biggr)^\frac{1}{r} \Biggr) \\
&\qquad\leq c_0 \lp \dashint_{R^+(\gamma)} (f-c_R)_+^r + \dashint_{R^-(\gamma)} (f-c_R)_-^r \rp^\frac{1}{r} ,
\end{align*}
where $c_0 = \max\{1, 2^{\frac{1}{q}-1} \} \max\{1, 2^{1 -\frac{1}{r}} \}$.
We observe that
\begin{align*}
&\lp \dashint_{R^+(\gamma)} (f-c_R)_+^r + \dashint_{R^-(\gamma)} (f-c_R)_-^r \rp^\frac{1}{r}\\
& \qquad\leq \lp \frac{1-\rho}{1-\gamma} \dashint_{R^+(\rho)} (f-c_R)_+^r + \frac{1-\sigma}{1-\gamma} \dashint_{R^-(\sigma)} (f-c_R)_-^r \rp^\frac{1}{r} \\
& \qquad\leq \lp \frac{1-\min\{\rho,\sigma\}}{1-\gamma} \rp^\frac{1}{r} \lp \dashint_{R^+(\rho)} (f-c_R)_+^r + \dashint_{R^-(\sigma)} (f-c_R)_-^r \rp^\frac{1}{r} .
\end{align*}
By taking supremum over all parabolic subrectangles $R \subset \Omega_T$, we arrive at
\[
\norm{f}_{\PBMO_{\gamma,q}^{+}(\Omega_T)} \leq c_0 \lp \frac{1-\min\{\rho,\sigma\}}{1-\gamma} \rp^\frac{1}{r} \norm{f}_{\PBMO_{\rho,\sigma,r}^{+}(\Omega_T)} .
\]

To show the second inequality, we make the restriction $0 < q \leq 1$ so that we can apply Theorem~\ref{global_pJN}. 
This is not an issue since after establishing the second inequality for $0 < q \leq 1$ we can use the first inequality to obtain the whole range $0 < q \leq r$.
Cavalieri's principle and Theorem~\ref{global_pJN} imply that
\begin{align*}
\dashint_{R^+(\rho)} (f-c)_+^r  
&= \frac{r}{\lvert R^+(\rho) \rvert} \int_0^\infty \lambda^{r-1} \lvert R^{+}(\rho) \cap \{ (f-c)_+ > \lambda \} \rvert \dla \\
&\leq A r  \int_0^\infty \lambda^{r-1} e^{-B(\lambda/\norm{f}_{\PBMO_{\gamma,q}^{+}(R)})^q } \dla \\
&= A r \lp \frac{\norm{f}_{\PBMO_{\gamma,q}^{+}(R)}}{B^\frac{1}{q}} \rp^{r-1} \frac{\norm{f}_{\PBMO_{\gamma,q}^{+}(R)}}{B^\frac{1}{q} q} \int_0^\infty s^{\frac{r}{q}-1} e^{-s} \ds \\
&= \frac{Ar}{B^\frac{r}{q} q}  \Gamma\bigl(\tfrac{r}{q}\bigr) \norm{f}_{\PBMO_{\gamma,q}^{+}(R)}^r ,
\end{align*}
where we applied a change of variables $s = B\lambda^q\norm{f}_{\PBMO_{\gamma,q}^{+}(R)}^{-q}$.
Similarly, we obtain
\[
\dashint_{R^-(\sigma)} (f-c)_-^r  \leq \frac{Ar}{B^\frac{r}{q} q}  \Gamma\bigl(\tfrac{r}{q}\bigr) \norm{f}_{\PBMO_{\gamma,q}^{+}(R)}^r .
\]
By adding up the two estimates above and taking supremum over all parabolic rectangles $R \subset \Omega_T$, we conclude that
\[
\norm{f}_{\PBMO_{\rho,\sigma,r}^{+}(\Omega_T)} \leq c_2 \norm{f}_{\PBMO_{\gamma,q}^{+}(\Omega_T)}
\quad\text{with}\quad
c_2 = \lp 2 \frac{Ar}{B^\frac{r}{q} q}  \Gamma\bigl(\tfrac{r}{q}\bigr) \rp^\frac{1}{r} .
\]
\end{proof}

\section{A global parabolic John--Nirenberg inequality} 

The results in the previous sections are local in the sense that they give estimates on a parabolic rectangle $R\subset\Omega_T$.
Next we discuss the corresponding estimates on the entire domain under the assumption that the domain $\Omega\subset\mathbb R^n$
satisfies a quasihyperbolic boundary condition.

\begin{definition}
The quasihyperbolic metric in a domain $\Omega \subset \mathbb{R}^n$ is defined by setting, for any $x_1,x_2\in\Omega$,
\[
k(x_1,x_2) = \inf_{\gamma_{x_1x_2}}\int_{\gamma_{x_1x_2}}\frac{1}{d(x,\partial\Omega)} \ds(x) ,
\]
where the infimum is taken over all rectifiable paths $\gamma_{x_1x_2}$ in $\Omega$ connecting $x_1$ to $x_2$.
\end{definition}

\begin{definition}
A domain $\Omega$ is said to satisfy the quasihyperbolic boundary condition
if there exist $x_0 \in \Omega$ and constants $c_1$ and $c_2$ such that
\[
k(x_0,x) \leq c_1 \log\frac{1}{d(x,\partial\Omega)} + c_2.
\]
for every $x \in \Omega$.
\end{definition}

The class of the domains satisfying the quasihyperbolic boundary condition was first introduced in~\cite{gehringmartio}. 
Note that a domain satisfying the quasihyperbolic boundary condition is bounded.
In~\cite{localtoglobal}, a parabolic John--Nirenberg lemma was proven for domains satisfying the quasihyperbolic boundary condition. We state it here in its complete form.

\begin{theorem}
\label{global_pJN_quasi}
Assume that $\Omega \subset \mathbb{R}^n$ satisfies the quasihyperbolic boundary condition.
Let $0 < \gamma <1$, $0<q \leq 1$ and $0 < \tau_1 < \tau_2 <  T$.
Assume that $f\in\PBMO^+_{\gamma,q}(\Omega_T)$ and let  $\norm{f} = \norm{f}_{\PBMO_{\gamma,q}^{+}(\Omega_T)}$.
Then there exist constants $c \in \mathbb{R}$, $A=A(n,p,q,\gamma,\Omega,\tau_1,\tau_2)$ and $B=B(n,p,q,\gamma,\Omega,\tau_1,\tau_2)$ such that
\[
\lvert\Omega \times (\tau_2,T)\cap \{(f-c)_+ > \lambda \} \rvert \leq A e^{-B(\lambda/\norm{f})^q} \lvert \Omega \times (\tau_2,T) \rvert
\]
and
\[
\lvert\Omega \times (0,\tau_1)\cap\{(f-c)_- > \lambda \} \rvert \leq  A e^{-B(\lambda/\norm{f})^q} \lvert \Omega \times (0,\tau_1) \rvert 
\]
for every $\lambda > 0$.
\end{theorem}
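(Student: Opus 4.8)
The plan is to reduce the theorem to the local estimate of Corollary~\ref{local_pJN} by a parabolic chaining argument adapted to the quasihyperbolic boundary condition, in the spirit of Saari~\cite{localtoglobal} and Smith and Stegenga~\cite{smithstegenga}. By replacing $f(x,t)$ with $-f(x,T-t)$ the second estimate is equivalent to one of the form of the first, so it suffices to produce \emph{one} construction that yields both estimates with a common constant $c$. Since a domain satisfying the quasihyperbolic boundary condition is bounded and $0<\tau_1<\tau_2<T$, there is room inside $\Omega_T$ for the rectangles below. First I would cover the slabs $\Omega\times(0,\tau_1)$ and $\Omega\times(\tau_2,T)$ by a countable family $\{R_i\}$ of parabolic rectangles with $R_i\subset\Omega_T$, the spatial side of $R_i$ comparable to the distance of its spatial cube to $\partial\Omega$ (so $R_i$ is small in space and, by the $p$-scaling, in time near $\partial\Omega$), chosen so that for a fixed $\gamma<\alpha<1$ the parts $R_i^+(\alpha)$, $R_i^-(\alpha)$ still cover the respective slabs and have bounded overlap, giving $\sum_i\lvert R_i^\pm(\alpha)\rvert\lesssim\lvert\Omega\times(\tau_2,T)\rvert+\lvert\Omega\times(0,\tau_1)\rvert$. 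As $R_i\subset\Omega_T$ we have $\norm{f}_{\PBMO_{\gamma,q}^{+}(R_i)}\le\norm{f}$, so Corollary~\ref{local_pJN} applies on each $R_i$ and supplies the minimal constant $c_{R_i}$ (Lemma~\ref{PBMO_constant}) together with exponential decay of $\lvert R_i^\pm(\alpha)\cap\{(f-c_{R_i})_\pm>\mu\}\rvert$.

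Next I would attach to each $R_i$ a chain $R_i=P_0,P_1,\dots,P_{N_i}=\mathfrak R$ of parabolic rectangles ending at a single central rectangle $\mathfrak R$ lying deep inside $\Omega\times(\tau_1,\tau_2)$, descending in time for $R_i$ in the upper slab and ascending for $R_i$ in the lower slab, with consecutive links arranged as in the proof of Theorem~\ref{global_pJN} so that $\lvert P_{k-1}^-\cap P_k^+\rvert\gtrsim\lvert P_k^+\rvert$ (resp.\ $\lvert P_{k-1}^+\cap P_k^-\rvert\gtrsim\lvert P_k^-\rvert$). The chain is built by combining a spatial Whitney chain of cubes from the spatial cube of $R_i$ to that of $\mathfrak R$, whose length is comparable to the quasihyperbolic distance and hence $\le c_1\log(1/d(\cdot,\partial\Omega))+c_2$ under the quasihyperbolic boundary condition (cf.~\cite{gehringmartio}), with a dyadic-in-time stacking adapted to the $p$-scaling, so that the total length satisfies $N_i\le c_1\log(1/\delta_i)+c_2$, where $\delta_i$ is the parabolic distance from $R_i$ to the parabolic boundary of $\Omega\times(0,T)$. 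Telescoping and regrouping along the chain, using for each $P_k$ the two conditions of Definition~\ref{def.pbmo} and $0<q\le1$, then gives (for an upper-slab $R_i$; the lower-slab case is symmetric, with $(c_{\mathfrak R}-c_{R_i})_+$)
\[
(c_{R_i}-c_{\mathfrak R})_+^q\le\sum_{k=1}^{N_i}(c_{P_{k-1}}-c_{P_k})_+^q\le c\sum_{k=0}^{N_i}\lp\dashint_{P_k^-}(f-c_{P_k})_-^q+\dashint_{P_k^+}(f-c_{P_k})_+^q\rp\le c\,(N_i+1)\,\norm{f}^q,
\]
hence $(c_{R_i}-c_{\mathfrak R})_\pm\le c\,N_i^{1/q}\norm{f}$. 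The quantitative form of the quasihyperbolic boundary condition from~\cite{smithstegenga}, carried over to the parabolic setting in~\cite{localtoglobal}, also yields $\varepsilon>0$ and $c$ with $\sum_{i:\,N_i\ge\nu}\lvert R_i^\pm(\alpha)\rvert\le c\,e^{-\varepsilon\nu}\bigl(\lvert\Omega\times(\tau_2,T)\rvert+\lvert\Omega\times(0,\tau_1)\rvert\bigr)$ for all $\nu>0$.

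To assemble, set $c=c_{\mathfrak R}$ and bound $\lvert\Omega\times(\tau_2,T)\cap\{(f-c)_+>\lambda\}\rvert$ by $\sum_i\lvert R_i^+(\alpha)\cap\{(f-c)_+>\lambda\}\rvert$, splitting each term according to whether $(f-c_{R_i})_+>\lambda/2$ or $(c_{R_i}-c)_+>\lambda/2$; the first contributions sum, via Corollary~\ref{local_pJN} and bounded overlap, to $A\,e^{-B(\lambda/\norm{f})^q}\lvert\Omega\times(\tau_2,T)\rvert$, while the second set is empty unless $N_i\gtrsim(\lambda/\norm{f})^q$ and so, by the measure estimate, contributes at most $c\,e^{-\varepsilon'(\lambda/\norm{f})^q}\lvert\Omega\times(\tau_2,T)\rvert$; summing these and absorbing small $\lambda$ via $1\le e\,e^{-(\lambda/(C\norm{f}))^q}$ gives the first estimate, and the second follows identically using the ascending chains and the $-$ part of Corollary~\ref{local_pJN}. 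I expect the main obstacle to be the construction of these chains: because time scales like the $p$-th power of length, a naive vertical stacking produces chains of polynomial rather than logarithmic length in $1/\delta_i$, which would ruin the exponential bound, so the spatial Whitney chains must be interleaved with a scale-adapted time decomposition keeping $N_i\lesssim\log(1/\delta_i)+1$ while preserving the $\pm$-overlap of the links needed to invoke Definition~\ref{def.pbmo}; together with extracting the exponential measure decay from the quasihyperbolic boundary condition, this is exactly what is done in~\cite{smithstegenga} in the elliptic case and in~\cite{localtoglobal} in the parabolic case.
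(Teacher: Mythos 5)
The paper does not prove this theorem: the sentence immediately preceding it reads ``In~\cite{localtoglobal}, a parabolic John--Nirenberg lemma was proven for domains satisfying the quasihyperbolic boundary condition. We state it here in its complete form,'' and no argument follows the statement. There is therefore no in-paper proof to compare against; the actual proof lives in Saari's paper~\cite{localtoglobal}.

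As a reconstruction, your outline has the right overall shape (Whitney-type cover of the slabs, chains to a central rectangle $\mathfrak{R}$, local estimate from Corollary~\ref{local_pJN} on each $R_i$, telescoping for the constants, exponential measure decay from the quasihyperbolic boundary condition), and the bookkeeping --- the telescoping inequality $(c_{R_i}-c_{\mathfrak{R}})_+^q\le\sum_k(c_{P_{k-1}}-c_{P_k})_+^q$ for $0<q\le1$, the split according to $(f-c_{R_i})_+>\lambda/2$ or $(c_{R_i}-c_{\mathfrak{R}})_+>\lambda/2$, and absorbing small $\lambda$ --- is correctly set up and mirrors the proof of Theorem~\ref{global_pJN}. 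However, the two steps that carry the real mathematical content are only asserted. (a) The construction of parabolic chains of \emph{logarithmic} length $N_i\lesssim\log(1/\delta_i)$ while preserving the overlap condition $\lvert P_{k-1}^-\cap P_k^+\rvert\gtrsim\lvert P_k^+\rvert$: you correctly flag that a na\"ive vertical stack is polynomial in $1/\delta_i$ because time scales like $L^p$, but ``the spatial Whitney chains must be interleaved with a scale-adapted time decomposition'' is a description of the obstacle, not of its resolution; this is the technical core of the argument in~\cite{localtoglobal}. (b) The tail estimate $\sum_{i:N_i\ge\nu}\lvert R_i^\pm(\alpha)\rvert\lesssim e^{-\varepsilon\nu}\bigl(\lvert\Omega\times(\tau_2,T)\rvert+\lvert\Omega\times(0,\tau_1)\rvert\bigr)$: this needs both that $N_i$ is comparable to the quasihyperbolic distance from the spatial cube of $R_i$ to the base point and the exponential integrability of $k(x_0,\cdot)$ over $\Omega$ under the quasihyperbolic boundary condition, which is the content of~\cite[Theorem~A]{smithstegenga}; citing the conclusion without identifying this mechanism leaves the step unverifiable. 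Until (a) and (b) are actually carried out, what you have is a correct plan rather than a proof.
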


As a corollary of Theorem~\ref{global_pJN_quasi}, the positive part of a PBMO$^{+}$ function is exponentially integrable on upper parts of space-time cylinders and the negative part on lower parts.

\begin{corollary}
\label{exp.int_upper}
Assume that $\Omega \subset \mathbb{R}^n$ satisfies the quasihyperbolic boundary condition and let $0 < \tau_1 < \tau_2 <  T$.
Assume that $f\in\PBMO^+(\Omega_T)$ and let $\norm{f} = \norm{f}_{\PBMO^{+}(\Omega_T)}$.
If $0<\delta<B/\norm{f}$, then
\[
\dashint_{\Omega \times (\tau_2,T)} e^{\delta (f-c)_+} < \infty 
\quad\text{and}\quad
\dashint_{\Omega \times (0,\tau_1)} e^{\delta (f-c)_-} < \infty ,
\]
where $B$ and $c$ are the constants from Theorem~\ref{global_pJN_quasi}.
\end{corollary}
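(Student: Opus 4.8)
The plan is to deduce the exponential integrability from the exponential decay of the distribution function supplied by Theorem~\ref{global_pJN_quasi}, combined with the layer-cake (Cavalieri) representation of an exponential integral.

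First I would reduce to the case $q=1$, so that the exponent $e^{-B\lambda/\norm{f}}$ in Theorem~\ref{global_pJN_quasi} is linear in $\lambda$ and matches the factor $e^{\delta\lambda}$ produced by the layer-cake formula. Since $f\in\PBMO^+(\Omega_T)$, Corollary~\ref{PBMOequivalent} gives $f\in\PBMO^+_{\gamma,1}(\Omega_T)$ for any fixed $0<\gamma<1$, with norm comparable to $\norm{f}$, and hence Theorem~\ref{global_pJN_quasi} applies with $q=1$: there are constants $c\in\mathbb R$, $A$ and $B$, depending on $n$, $p$, $\gamma$, $\Omega$, $\tau_1$ and $\tau_2$, such that
\[
\lvert\Omega\times(\tau_2,T)\cap\{(f-c)_+>\lambda\}\rvert\leq A\,e^{-B\lambda/\norm{f}}\,\lvert\Omega\times(\tau_2,T)\rvert
\]
for every $\lambda>0$, together with the analogous bound for $(f-c)_-$ on $\Omega\times(0,\tau_1)$. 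Because $\Omega$ satisfies the quasihyperbolic boundary condition it is bounded, so $E:=\Omega\times(\tau_2,T)$ has finite positive measure and the integral averages in the statement are well defined.

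Next, for the nonnegative function $g=(f-c)_+$ I would use the elementary identity $e^{\delta g(y,s)}=1+\delta\int_0^\infty e^{\delta\lambda}\,\mathbf{1}_{\{\lambda<g(y,s)\}}\dla$ and Fubini's theorem to obtain
\[
\int_E e^{\delta(f-c)_+}=\lvert E\rvert+\delta\int_0^\infty e^{\delta\lambda}\,\lvert E\cap\{(f-c)_+>\lambda\}\rvert\dla.
\]
Inserting the decay estimate above yields
\[
\int_E e^{\delta(f-c)_+}\leq\lvert E\rvert\lp 1+A\delta\int_0^\infty e^{(\delta-B/\norm{f})\lambda}\dla\rp,
\]
and the last integral converges precisely when $\delta<B/\norm{f}$, with value $(B/\norm{f}-\delta)^{-1}$. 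Dividing by $\lvert E\rvert$ gives $\dashint_E e^{\delta(f-c)_+}\leq 1+A\delta(B/\norm{f}-\delta)^{-1}<\infty$. The negative part is treated in exactly the same way on $\Omega\times(0,\tau_1)$, using the second estimate of Theorem~\ref{global_pJN_quasi} with the same constant $c$.

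The argument is essentially routine once Theorem~\ref{global_pJN_quasi} is in hand; the only point that deserves a remark is the reduction to $q=1$ via Corollary~\ref{PBMOequivalent}, which is what makes the threshold for $\delta$ come out as $\delta<B/\norm{f}$ rather than some power of it. I do not expect any substantial obstacle beyond this bookkeeping and the standard Fubini justification.
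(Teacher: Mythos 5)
Your proof is correct and follows essentially the same route as the paper's: both invoke Theorem~\ref{global_pJN_quasi} with $q=1$ and integrate the exponential tail bound via the layer-cake/Cavalieri formula. The only superficial difference is that the paper applies Cavalieri to $e^{\delta(f-c)_+}$ and substitutes $\lambda=e^s$, while you apply the equivalent identity $e^{\delta g}=1+\delta\int_0^\infty e^{\delta\lambda}\mathbf{1}_{\{g>\lambda\}}\,d\lambda$ directly; the resulting bound $1+A\delta\norm{f}/(B-\delta\norm{f})$ is the same.
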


\begin{proof}
Let $E = \Omega \times (\tau_2,T)$.
Cavalieri's principle and Theorem~\ref{global_pJN_quasi} with $q=1$ imply
\begin{align*}
\int_{E} e^{\delta (f-c)_+} 
&= \int_0^\infty \lvert E\cap\{ e^{\delta (f-c)_+} > \lambda \} \rvert \dla \\
&= \int_0^1 \lvert E \cap\{ e^{\delta (f-c)_+} > \lambda \} \rvert \dla 
+ \int_1^\infty \lvert E\cap\{ e^{\delta (f-c)_+} > \lambda \} \rvert \dla \\
&\leq \lvert E \rvert + \int_0^\infty e^s \lvert E\cap\{(f-c)_+ > s/\delta \} \rvert \ds \\
&\leq \lvert E \rvert + A \lvert E \rvert \int_0^\infty e^{s(1 - B/(\delta \norm{f})) } \ds \\
&\le \lvert E \rvert + A \lvert E \rvert \frac{\delta \norm{f}}{B - \delta \norm{f}} < \infty .
\end{align*}
Here we applied a change of variables $\lambda = e^s$. 
The second inequality follows in a similar way. 
\end{proof}

The following theorem gives the characterization of the domains on which the parabolic John--Nirenberg lemma holds globally. 

\begin{theorem}
Let $0 < \tau_1 < \tau_2 < T$.
A domain $\Omega$ satisfies the quasihyperbolic boundary condition if and only if there exist $\delta > 0$ and $c\in\mathbb{R}$ such that 
\[
\dashint_{\Omega \times (\tau_2,T)} e^{\delta (f-c)_+/\norm{f}}\leq 2
\quad\text{and}\quad
\dashint_{\Omega \times (0,\tau_1)} e^{\delta (f-c)_-/\norm{f}}\leq 2
\]
for every $f \in \PBMO^+(\Omega_T)$ with $\norm{f} = \norm{f}_{\PBMO^{+}(\Omega_T)}$.
\end{theorem}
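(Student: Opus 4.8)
The plan is to prove the two implications separately; throughout one may assume $\Omega$ is bounded, since a domain with the quasihyperbolic boundary condition is bounded, and for unbounded $\Omega$ the averages over $\Omega\times(\tau_2,T)$ and $\Omega\times(0,\tau_1)$ degenerate.

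\emph{The quasihyperbolic boundary condition implies the exponential bound.} This is a quantitative form of Corollary~\ref{exp.int_upper}. Under the quasihyperbolic boundary condition, Theorem~\ref{global_pJN_quasi} with $q=1$ furnishes $c\in\mathbb R$ and constants $A,B>0$, depending on $n,p,\gamma,\Omega,\tau_1,\tau_2$ but not on $f$, that control the distribution function of $(f-c)_+$ on $\Omega\times(\tau_2,T)$. Carrying out the Cavalieri computation from the proof of Corollary~\ref{exp.int_upper} while keeping track of constants, one obtains
\[
\dashint_{\Omega\times(\tau_2,T)}e^{\delta(f-c)_+/\norm{f}}\le 1+\frac{A\delta}{B-\delta}\qquad\text{for every }0<\delta<B .
\]
Now I would fix $\delta=\delta(n,p,\gamma,\Omega,\tau_1,\tau_2)>0$ so small that $A\delta\le B-\delta$, that is, $\delta\le B/(A+1)$; then the right-hand side is at most $2$, uniformly over all $f\in\PBMO^+(\Omega_T)$. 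The lower-part estimate on $\Omega\times(0,\tau_1)$ is symmetric.

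\emph{The exponential bound implies the quasihyperbolic boundary condition.} Assume the two exponential estimates hold with a fixed $\delta>0$ and, for each $f$, a constant $c\in\mathbb R$. I would test them against the time-independent function $f(x,t)=k(x_0,x)$ for a fixed $x_0\in\Omega$. First one must check $f\in\PBMO^+(\Omega_T)$: by Corollary~\ref{PBMOequivalent} it suffices to take the exponent $q=1$, and since $f$ is independent of $t$, for any parabolic rectangle $R=Q\times(t-L^p,t+L^p)\subset\Omega_T$ the choice $c=k(x_0,\cdot)_Q$ gives
\[
\dashint_{R^+(\gamma)}(f-c)_+ +\dashint_{R^-(\gamma)}(f-c)_- =\dashint_Q\lvert k(x_0,\cdot)-k(x_0,\cdot)_Q\rvert\le\norm{k(x_0,\cdot)}_{\BMO(\Omega)} .
\]
Since the quasihyperbolic metric belongs to $\BMO(\Omega)$ with a norm bounded by a dimensional constant (see~\cite{smithstegenga}), this yields $0<\norm{f}_{\PBMO^+(\Omega_T)}\le C(n)<\infty$, the positivity because $k(x_0,\cdot)$ is non-constant on a bounded domain. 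Feeding this $f$ into the hypothesis and using time-independence, $\dashint_\Omega e^{\delta(k(x_0,x)-c)_+/\norm{f}}\dx\le 2$, and bounding the integrand below by $e^{-\delta c/\norm{f}}e^{\delta k(x_0,x)/\norm{f}}$ gives $\int_\Omega e^{\delta_0 k(x_0,x)}\dx<\infty$ with $\delta_0=\delta/\norm{f}>0$. By the theorem of Smith and Stegenga~\cite{smithstegenga}, a bounded domain for which $e^{\delta_0 k(x_0,\cdot)}\in L^1(\Omega)$ for some $\delta_0>0$ satisfies the quasihyperbolic boundary condition. This closes the equivalence.

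The principal difficulty is the second implication. The crux is to exhibit a single $\PBMO^+(\Omega_T)$ function whose exponential integrability genuinely encodes the geometry of $\partial\Omega$; the quasihyperbolic metric is the natural choice, but one must justify that it lies in parabolic $\BMO$ — via the time-independent reduction together with the classical $\BMO$ estimate for $k(x_0,\cdot)$ — and then match the scalar constants carefully so that the normalization ``$\le 2$'' delivers genuine finiteness, and not merely a finite bound, of $\int_\Omega e^{\delta_0 k(x_0,\cdot)}$, which is precisely the quantity governed by the Smith--Stegenga criterion. The degenerate cases $\norm{f}=0$ and $\Omega$ unbounded are routine and should be dispatched first.
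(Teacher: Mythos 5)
Your proof is correct and follows essentially the same line as the paper's: for the forward direction you quantify the Cavalieri computation behind Corollary~\ref{exp.int_upper} and choose $\delta$ small enough (and your choice $\delta\le B/(A+1)$ is the right normalization given that the theorem's exponential already contains the factor $1/\norm{f}$); for the converse you test against $f(x,t)=k(x_0,x)$ and invoke Smith--Stegenga. The only stylistic difference is in how Smith--Stegenga is applied: you pass directly to $\int_\Omega e^{\delta_0 k(x_0,\cdot)}<\infty$ by discarding the constant $c$ via the pointwise bound $(f-c)_+\ge f-c$, while the paper routes through Jensen's and Young's inequalities to recover the normalized uniform estimate $\dashint_\Omega e^{\tilde{\delta}\lvert f-f_\Omega\rvert/\norm{f}_{\BMO(\Omega)}}\le 2$ before citing Theorem A; both land on the same Smith--Stegenga characterization.
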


\begin{proof}
One direction follows from the proof of Corollary~\ref{exp.int_upper} by choosing $0<\delta \leq \tfrac{B}{1+A} \frac{1}{\norm{f}}$.
For the other direction, we consider $f(x,t) = k(x_0,x)$, where $k(x_0,x)$ denotes the quasihyperbolic metric in $\Omega$.
We note that 
\[
\norm{f}_{\PBMO^{+}(\Omega_T)} \leq \norm{f}_{\BMO(\Omega)} \leq 2 \norm{f}_{\PBMO^{+}(\Omega_T)}.
\]
By~\cite[Theorem A]{smithstegenga}, we conclude that $f \in\BMO(\Omega)$. Thus, we have $f \in \PBMO^+(\Omega_T)$ and the parabolic John--Nirenberg inequality of the claim applies for $f$.
Set $\tilde{\delta} =\delta/4$.
By applying Jensen's inequality twice, Young's inequality and the parabolic John--Nirenberg inequality, we obtain
\begin{align*}
\dashint_{\Omega} e^{\tilde{\delta} \lvert f-f_\Omega \rvert/\norm{f}_{\BMO(\Omega)}}
&\leq \dashint_{\Omega} e^{\tilde{\delta} \lvert f-f_\Omega \rvert/\norm{f}}
\leq e^{\tilde{\delta} \lvert f_\Omega - c \rvert/\norm{f}} 
\dashint_{\Omega} e^{\tilde{\delta} \lvert f-c \rvert/\norm{f}}\\
&\leq e^{\dashint_\Omega\tilde{\delta} \lvert f- c \rvert/\norm{f}} 
\dashint_{\Omega} e^{\tilde{\delta} \lvert f-c \rvert/\norm{f}}
\leq \lp \dashint_{\Omega} e^{\tilde{\delta} \lvert f-c \rvert/\norm{f}}\rp^2 \\
&\leq \dashint_{\Omega} e^{2 \tilde{\delta}\lvert f-c \rvert/\norm{f}}
= \dashint_{\Omega} e^{2\tilde{\delta} (f-c)_+/\norm{f}}e^{2\tilde{\delta} (f-c)_-/\norm{f}}\\
&\leq \frac{1}{2} \dashint_{\Omega} e^{4\tilde{\delta} (f-c)_+/\norm{f}} 
+ \frac{1}{2} \dashint_{\Omega} e^{4\tilde{\delta} (f-c)_-/\norm{f}}\\
&= \frac{1}{2} \dashint_{\Omega \times (\tau_2,T)} e^{\delta (f-c)_+/\norm{f}} 
+\frac{1}{2} \dashint_{\Omega \times (0,\tau_1)} e^{\delta (f-c)_-/\norm{f}}
\leq 2 .
\end{align*}
By~\cite[Theorem A]{smithstegenga}, the domain $\Omega$ satisfies the quasihyperbolic boundary condition.
\end{proof}

\section{Parabolic $\BMO$ with medians} 
\label{section5}

This section discusses John--Nirenberg inequalities for the median-type parabolic $\BMO$.
In many cases, it is preferable to consider medians instead of integral averages.
Let $0<s \leq 1$. Assume that $E \subset \mathbb{R}^{n+1}$ is a measurable set with $0<|E|<\infty$ and that $f:E\rightarrow [-\infty, \infty]$ is a measurable function.
A number $a\in\mathbb R$ is called an $s$-median of $f$ over $E$, if
\[
|\{x \in E: f(x) > a\}| \le s |E|
\quad\text{and}\quad
|\{x \in E: f(x) < a\}| \le (1-s)|E|.
\]
In general, the $s$-median is not unique. To obtain a uniquely defined notion, we consider the maximal $s$-median as in \cite{medians}.

\begin{definition}
\label{maxmedian}
Let $0<s \leq 1$. Assume that $E \subset \mathbb{R}^{n+1}$ is a measurable set with $0<|E|<\infty$ and that $f:E\rightarrow [-\infty, \infty]$ is a measurable function.
The maximal $s$-median of $f$ over $E$ is defined as
\[
m_f^s(E) = \inf \{a \in \mathbb{R} : |\{x \in E: f(x) > a\}| < s |E| \} .
\]
\end{definition}

The maximal $s$-median of a function is an $s$-median~\cite{medians}.
In the next lemma, we list the basic properties of the maximal $s$-median.
We refer to~\cite{myyrylainen,medians} for the proofs of the properties.

\begin{lemma}
\label{medianprops}
Let $0<s \leq 1$. Assume that $E \subset \mathbb{R}^{n+1}$ is a measurable set with $0<|E|<\infty$ and that $f,g:E\rightarrow [-\infty, \infty]$ are measurable functions.
The maximal $s$-median has the following properties.
\begin{enumerate}[(i),topsep=5pt,itemsep=5pt]

\item$m_f^{s'}(E) \leq m_f^s(E)$ for any $0<s \leq s'\le1$.

\item $m_f^s(E) \leq m_g^s(E)$ whenever $f\leq g$ almost everywhere in $E$.

\item If $E \subset E'$ and $|E'| \leq c |E|$ with some $c \geq 1$, then $m_f^s(E) \leq m_f^{s/c}(E')$.

\item $m_{\varphi \circ f}^s(E) = \varphi(m_f^s(E))$ for an increasing continuous function $\varphi: f(E) \to [-\infty, \infty]$.

\item $m_f^s(E) + c = m_{f+c}^s(E)$ for any $c \in \mathbb{R}$.

\item $m_{cf}^s(E) = c \, m_f^s(E)$ for any $c > 0$.

\item $|m_{f}^s(E)| \leq m_{|f|}^{\min\{s,1-s\}}(E)$.

\item $m_{f+g}^s(E) \leq m_f^{t_1}(E) + m_g^{t_2}(E)$ whenever $t_1 + t_2 \leq s$.

\item For any $f \in L^p(E)$ with $p>0$, \[ m_{|f|}^s(E) \leq \lp s^{-1} \dashint_E |f|^p \rp^{\frac{1}{p}}. \]

\item If $E_i$,  $i \in \mathbb{N}$, are pairwise disjoint measurable sets, then
\[
\inf_{i} m_f^s(E_i) \leq m_f^s\Bigl(\bigcup_{i=1}^\infty E_i\Bigr) \leq \sup_{i} m_f^s(E_i) .
\]
\end{enumerate}
\end{lemma}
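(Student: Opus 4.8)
The plan is to reduce the whole lemma to one structural fact about distribution functions, after which each item (i)--(x) becomes a short manipulation of superlevel sets. The fact is: for measurable $f:E\to[-\infty,\infty]$ with $0<|E|<\infty$, the map $a\mapsto|\{x\in E:f(x)>a\}|$ is non-increasing and right-continuous, the latter because $\{x\in E:f(x)>a\}=\bigcup_{b>a}\{x\in E:f(x)>b\}$. Hence $G_s:=\{a\in\mathbb R:|\{x\in E:f(x)>a\}|<s|E|\}$ is an up-closed half-line with left endpoint $m_f^s(E)$, and one reads off that $|\{f>a\}|<s|E|$ for every $a>m_f^s(E)$, that $|\{f>a\}|\ge s|E|$ for every $a<m_f^s(E)$, and, by right-continuity, that $|\{f>m_f^s(E)\}|\le s|E|$ and $|\{f<m_f^s(E)\}|\le(1-s)|E|$; in particular the maximal $s$-median is an $s$-median. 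I would then prove the items in the order (i), (ii), (iii), (v), (vi), (iv), (ix), (viii), (vii), (x).

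Items (i)--(iii), (v), (vi), (iv) are essentially changes of variables in $G_s$. For (i), enlarging $s$ enlarges $G_s$, hence lowers $\inf G_s$. For (ii), $f\le g$ a.e.\ gives $|\{f>a\}|\le|\{g>a\}|$, so the half-line for $g$ lies inside that for $f$. For (iii), $\{x\in E:f>b\}\subset\{x\in E':f>b\}$ together with $|E'|\le c|E|$ turns $|\{x\in E':f>b\}|<(s/c)|E'|$ into $|\{x\in E:f>b\}|<s|E|$ for every $b>m_f^{s/c}(E')$. For (v) and (vi) one uses $\{f+c>a\}=\{f>a-c\}$ and, for $c>0$, $\{cf>a\}=\{f>a/c\}$, which translate and rescale $G_s$; and (iv) is the analogous substitution $t=\varphi(a)$, valid since a continuous strictly increasing $\varphi$ is a homeomorphism onto its image, so that $\{\varphi\circ f>t\}=\{f>\varphi^{-1}(t)\}$ and $\varphi$ commutes with the infimum by monotonicity and continuity. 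Finally (ix) is Chebyshev's inequality: for $a>0$, $|\{|f|>a\}|=|\{|f|^p>a^p\}|\le a^{-p}\int_E|f|^p<s|E|$ once $a>(s^{-1}\dashint_E|f|^p)^{1/p}$, whence $m_{|f|}^s(E)\le(s^{-1}\dashint_E|f|^p)^{1/p}$.

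The remaining items (viii), (vii), (x) come from inclusions between superlevel sets. For (viii), with $a=m_f^{t_1}(E)$ and $b=m_g^{t_2}(E)$, the inclusion $\{f+g>a+b+\varepsilon\}\subset\{f>a+\tfrac{\varepsilon}{2}\}\cup\{g>b+\tfrac{\varepsilon}{2}\}$ gives $|\{f+g>a+b+\varepsilon\}|<(t_1+t_2)|E|\le s|E|$, so $m_{f+g}^s(E)\le a+b+\varepsilon$, and one lets $\varepsilon\downarrow0$. For (vii), writing $t=\min\{s,1-s\}$, the bound $m_f^s(E)\le m_{|f|}^t(E)$ follows from (ii) applied to $f\le|f|$ and then (i) applied with $t\le s$; the matching bound $m_f^s(E)\ge-m_{|f|}^t(E)$ follows because every $a<-m_{|f|}^t(E)$ satisfies $\{x\in E:f\le a\}\subset\{x\in E:|f|>m_{|f|}^t(E)\}$, a set of measure $\le t|E|\le(1-s)|E|$, so $|\{f>a\}|\ge s|E|$ and hence $a\le m_f^s(E)$. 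For (x), additivity of Lebesgue measure over the disjoint union gives, for $a>\sup_i m_f^s(E_i)$, that $|\{x\in E:f>a\}|=\sum_i|\{x\in E_i:f>a\}|$ with each summand strictly below $s|E_i|$; the series $\sum_i\bigl(s|E_i|-|\{x\in E_i:f>a\}|\bigr)$ converges, has nonnegative terms, and has at least one strictly positive term, so its sum is strictly positive and $|\{x\in E:f>a\}|<s|E|$, giving $m_f^s(E)\le\sup_i m_f^s(E_i)$; the symmetric argument with $a<\inf_i m_f^s(E_i)$ and the reversed inequalities yields $m_f^s(E)\ge\inf_i m_f^s(E_i)$.

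The part needing genuine care — and the reason the authors defer the details to \cite{myyrylainen,medians} — is not any single computation but the bookkeeping forced by choosing the \emph{maximal} $s$-median: several steps, notably the reverse inequalities in (vii) and (x) and the endpoint identities implicit in (iv), rely on the right-continuity of the distribution function and on the elementary but easily missed fact that a convergent series of nonnegative reals with at least one positive term has a strictly positive sum. One should also be explicit that in (iv) the function $\varphi$ is meant to be continuous and \emph{strictly} increasing on $f(E)$, so that $\varphi^{-1}$ is available for the change of variables.
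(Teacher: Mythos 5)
The paper does not prove this lemma: it cites \cite{myyrylainen,medians} for the individual properties and elsewhere uses only the fact that the maximal $s$-median is an $s$-median. Your self-contained argument is correct and is the natural replacement. Organizing everything around the non-increasing, right-continuous distribution function $a\mapsto|\{f>a\}|$ and the up-closed half-line $G_s=\{a:|\{f>a\}|<s|E|\}$ with $\inf G_s=m_f^s(E)$ is exactly the efficient frame: items (i)--(vi) and (ix) are substitutions in $G_s$ or Chebyshev's inequality; (viii) follows from the superlevel-set inclusion and letting $\varepsilon\downarrow 0$; (vii) combines the upper bound from (ii) and (i) with the lower bound coming from $\{f\le a\}\subset\{|f|>m_{|f|}^t(E)\}$ and the $s$-median estimate $|\{|f|>m_{|f|}^t(E)\}|\le t|E|\le(1-s)|E|$; and in (x) the strict inequality passes to the countable union because $\sum_i\bigl(s|E_i|-|\{x\in E_i:f>a\}|\bigr)$ converges to $s|\bigcup_i E_i|-|\{f>a\}|$ and every term is strictly positive (each $|E_i|>0$ being implicitly required for $m_f^s(E_i)$ to be defined). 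One small remark: the caveat you attach to (iv) is more cautious than necessary. For a merely non-decreasing continuous $\varphi$ the set $\{\varphi>t\}$ is still an open half-line $(a_t,\infty)$, and continuity of $\varphi$ at $a_t$ forbids $a_t=m_f^s(E)$ whenever $t>\varphi(m_f^s(E))$, so $a_t>m_f^s(E)$ and the $\le$ direction goes through without inverting $\varphi$; the $\ge$ direction uses only left-continuity at $m_f^s(E)$. Flagging the ambiguity in ``increasing'' is nevertheless reasonable, but it is not a gap in the lemma.
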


Lemma~\ref{LDT} can be combined with the proof of the Lebesgue differentiation theorem for medians in~\cite{medians} to obtain the following lemma. 

\begin{lemma}
\label{leb.diff.medians}
Let $f: \mathbb{R}^{n+1} \rightarrow [-\infty, \infty]$ be a measurable function which is finite almost everywhere in $\mathbb R^{n+1}$ and $0<s\leq 1$. Then 
\[
\lim_{i \to \infty}  m_f^s(A_i) = f(x,t)
\]
for almost every $(x,t) \in \mathbb{R}^{n+1}$, whenever $(A_i)_{i\in\mathbb N}$ is a sequence of measurable sets converging regularly to $(x,t)$.
\end{lemma}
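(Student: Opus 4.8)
The plan is to reduce the median statement to the integral Lebesgue differentiation theorem (Lemma~\ref{LDT}) by a standard truncation-and-approximation argument, following the scheme used in~\cite{medians} for the Euclidean case. First I would handle the case of a bounded function. If $f$ is bounded, then $f\in L^1_{\mathrm{loc}}(\mathbb R^{n+1})$, so Lemma~\ref{LDT} gives a set $N$ of measure zero such that for every $(x,t)\notin N$ and every sequence $(A_i)$ converging regularly to $(x,t)$ we have $\dashint_{A_i}|f(y,s)-f(x,t)|\,dy\,ds\to0$. Fix such a point $(x,t)$ and let $0<s\le1$. For $\lambda>0$, Chebyshev's inequality gives
\[
\bigl|\{(y,s)\in A_i:|f(y,s)-f(x,t)|>\lambda\}\bigr|
\le\frac1\lambda\int_{A_i}|f(y,s)-f(x,t)|\,dy\,ds
=o(|A_i|)
\]
as $i\to\infty$. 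Hence for $i$ large the set where $f>f(x,t)+\lambda$ has measure $<s|A_i|$ and the set where $f<f(x,t)-\lambda$ has measure $<(1-s)|A_i|$ (taking $\lambda$ small enough that both $s|A_i|$ and $(1-s)|A_i|$ are not overwhelmed — more precisely, choosing $i$ so large that the left-hand side is $<\min\{s,1-s\}|A_i|$). By the definition of the maximal $s$-median (Definition~\ref{maxmedian}), this forces $f(x,t)-\lambda\le m_f^s(A_i)\le f(x,t)+\lambda$ for all large $i$. Letting $i\to\infty$ and then $\lambda\to0$ yields $m_f^s(A_i)\to f(x,t)$.

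For general $f$ that is finite almost everywhere, I would apply the bounded case to the truncations $f_k=\min\{\max\{f,-k\},k\}$, $k\in\mathbb N$. By property~(ii) and~(iv) of Lemma~\ref{medianprops} (monotonicity of the median), on the set $\{|f|\le k\}$ we have $f_k=f$, so the medians agree once $k$ exceeds $|f(x,t)|$; quantitatively, for a point $(x,t)$ with $|f(x,t)|<\infty$, monotonicity gives $m_{f_k}^s(A_i)\le m_f^s(A_i)\le m_{f_{k'}}^{s}(A_i)$ is not quite what is needed, so instead I would argue directly: fix $(x,t)$ outside the measure-zero set where $f$ is infinite or where the bounded conclusion fails for some $f_k$ (a countable union of null sets, hence null), pick $k>|f(x,t)|+1$, and observe that by property~(ii) of Lemma~\ref{medianprops}, $m_{f_k}^s(A_i)\le m_f^s(A_i)$ since $f_k\le f$... this direction is fine, but for the upper bound I use $-f_k\le -f$ similarly, i.e. apply the same estimate to $-f$ whose truncations are $-f_k$. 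Combining, $m_{f_k}^s(A_i)\le m_f^s(A_i)$ and $-m_{-f_k}^{1-s}(A_i)\ge -m_{-f}^{1-s}(A_i)$, and since each $m^s$ coincides with an $s$-median which is squeezed between these, the bounded-case convergence $m_{f_k}^s(A_i)\to f_k(x,t)=f(x,t)$ pins $m_f^s(A_i)$ to the same limit.

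The main obstacle is the bookkeeping in the reduction step: unlike integral averages, medians are not linear, so one cannot simply estimate $|m_f^s(A_i)-f(x,t)|$ by a single mean-oscillation quantity. The cleanest route is to use property~(v) of Lemma~\ref{medianprops} to write $m_f^s(A_i)=m_{f-f(x,t)}^s(A_i)+f(x,t)$, and then control $m_{f-f(x,t)}^s(A_i)$ from above and below by Chebyshev applied to $f-f(x,t)$ and to $f(x,t)-f$ respectively, exactly as in the bounded case above. This observation makes the truncation essentially unnecessary once one notes that property~(vii) of Lemma~\ref{medianprops} together with property~(ix) already bounds $|m_{f-f(x,t)}^s(A_i)|$ by $\bigl(\min\{s,1-s\}^{-1}\dashint_{A_i}|f-f(x,t)|\bigr)$, and the right-hand side tends to $0$ by Lemma~\ref{LDT}; so in fact the whole statement follows in one line from property~(ix) and Lemma~\ref{LDT}, and the truncation is only needed to guarantee local integrability, which for finite-a.e.\ $f$ is automatic after intersecting with the level set where the bounded approximants already handle integrability. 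I would present the one-line argument via properties (v), (vii), (ix) as the main proof and remark that the truncation handles the passage from locally integrable to merely measurable finite-a.e.\ functions.
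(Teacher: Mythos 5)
The paper itself does not give a proof of this lemma---it defers to the argument in~\cite{medians} combined with Lemma~\ref{LDT}---so your task is a reconstruction, and the skeleton you use (Lemma~\ref{LDT} plus Chebyshev gives $|A_i\cap\{|f-f(x,t)|>\lambda\}|=o(|A_i|)$, which then pins down $m_f^s(A_i)$) is the right one and is carried out correctly in the bounded case. The reduction to the bounded case, however, contains a genuine error. You claim $f_k\le f$ for the truncation $f_k=\min\{\max\{f,-k\},k\}$, but this is false: $f_k=-k>f$ on $\{f<-k\}$, so property~(ii) of Lemma~\ref{medianprops} does not give $m_{f_k}^s(A_i)\le m_f^s(A_i)$, and the companion use of ``$-f_k\le -f$'' would force $f_k=f$, so the squeezing step does not close. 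The proposed ``one-line'' alternative via properties (v), (vii), (ix) needs $f\in L^1_{\mathrm{loc}}$, which is not part of the hypotheses, and your closing remark does not actually supply it---a finite-a.e.\ measurable function need not be locally integrable. A clean repair avoids comparing medians altogether and compares superlevel sets: for $(x,t)$ with $|f(x,t)|<k-1$ and $0<\lambda<1$ one has the inclusion $A_i\cap\{|f-f(x,t)|>\lambda\}\subset A_i\cap\{|f_k-f_k(x,t)|>\lambda\}$, and the right-hand side is $o(|A_i|)$ by your bounded-case argument applied to $f_k\in L^\infty\subset L^1_{\mathrm{loc}}$. Alternatively, apply Lemma~\ref{LDT} to the indicators $\chi_{\{f>q\}}$, $q\in\mathbb{Q}$, and argue through density points of these level sets, bypassing truncation and integrability entirely.

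A second point worth flagging: your use of $\min\{s,1-s\}$ already signals that the lower bound $m_f^s(A_i)\ge f(x,t)-\lambda$ requires $s<1$, since one needs $|A_i\cap\{f\le a\}|<(1-s)|A_i|$ for all $a<f(x,t)-\lambda$, and at $s=1$ the right-hand side is zero while the left-hand side is merely $o(|A_i|)$. In fact the conclusion genuinely fails at $s=1$, where $m_f^1$ is the essential infimum: for $f=\chi_A$ with $A$ a closed nowhere dense set of positive measure one has $m_f^1(R_i)=0\ne1=f(x,t)$ for every $(x,t)\in A$. So state the range as $0<s<1$ (the printed $0<s\le1$ overshoots) and note explicitly where the argument uses $s<1$.
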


\begin{definition}
Let $\Omega \subset \mathbb{R}^n$ be a domain and $T>0$.
Given $0 \leq \gamma < 1$ and $0<s \leq1$, we say that a measurable function $f:\Omega_T\to[-\infty,\infty]$ belongs to the median-type parabolic $\BMO$, denoted by $\PBMO_{\gamma,0,s}^{+}(\Omega_T)$, if
\[
\norm{f}_{\PBMO_{\gamma,0,s}^{+}(\Omega_T)} 
= \sup_{R \subset \Omega_T} \inf_{c \in \mathbb{R}} \lp m_{(f-c)_+}^s(R^+(\gamma)) + m_{(f-c)_-}^s(R^-(\gamma)) \rp < \infty .
\]
If the condition above holds with the time axis reversed, then $f \in\PBMO_{\gamma,0,s}^{-}(\Omega_T)$.
\end{definition}

The next lemma is a counterpart of Lemma~\ref{PBMO_constant}. The proof is similar to that of Lemma~\ref{PBMO_constant} and thus is omitted here.

\begin{lemma}\label{lemma.mpbmonorm}
Let $\Omega_T \subset \mathbb{R}^{n+1}$ be a space-time cylinder, $0 \leq \gamma < 1$ and $0<s \leq1$. Assume that $f:\Omega_T\to[-\infty,\infty]$ is a measurable function. 
Then for every parabolic rectangle $R\subset\Omega_T$, there exists a constant $c_R\in\mathbb R$, that may depend on $R$, such that
\[
m_{(f-c_R)_+}^s(R^+(\gamma)) + m_{(f-c_R)_-}^s(R^-(\gamma)) 
= \inf_{c \in \mathbb{R}} \lp m_{(f-c)_+}^s(R^+(\gamma)) + m_{(f-c)_-}^s(R^-(\gamma)) \rp .
\]
In particular,
\[
\sup_{R \subset \Omega_T} \lp m_{(f-c_R)_+}^s(R^+(\gamma)) + m_{(f-c_R)_-}^s(R^-(\gamma)) \rp = \norm{f}_{\PBMO_{\gamma,0,s}^{+}(\Omega_T)} .
\]
\end{lemma}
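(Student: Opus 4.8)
The plan is to mirror the proof of Lemma~\ref{PBMO_constant}, replacing the $L^q$ estimates by the properties of the maximal $s$-median collected in Lemma~\ref{medianprops}. Fix a parabolic rectangle $R \subset \Omega_T$ and set
\[
F(c) = m_{(f-c)_+}^s(R^+(\gamma)) + m_{(f-c)_-}^s(R^-(\gamma)), \qquad c \in \mathbb{R},
\]
so that the assertion is that $F$ attains its infimum $I = \inf_{c \in \mathbb{R}} F(c)$ at some $c_R \in \mathbb{R}$. Since the maximal $s$-median of a nonnegative function is nonnegative, $F \geq 0$, and $F(0) < \infty$ because $f$ is finite almost everywhere and the maximal $s$-median of an almost everywhere finite function over a set of finite positive measure is a finite real number; hence $I \in [0,\infty)$. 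First I would choose a minimizing sequence $(c_i)_{i \in \mathbb{N}}$ with $F(c_i) < I + 2^{-i}$ for every $i \in \mathbb{N}$.

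The first and main step is to show that $(c_i)_{i \in \mathbb{N}}$ is bounded. This is where the argument genuinely differs from that of Lemma~\ref{PBMO_constant}, since the elementary convexity inequalities available for $L^q$ norms are not at our disposal; instead I would bound $F(c)$ from below linearly in $\pm c$. From the pointwise inequality $(f-c)_- = (c-f)_+ \geq c - f$, the monotonicity property~(ii) and the constant-shift property~(v) of Lemma~\ref{medianprops},
\[
m_{(f-c)_-}^s(R^-(\gamma)) \geq m_{c-f}^s(R^-(\gamma)) = c + m_{-f}^s(R^-(\gamma)) = c - M ,
\]
where $M := -m_{-f}^s(R^-(\gamma)) \in \mathbb{R}$ is independent of $c$. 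Symmetrically, from $(f-c)_+ \geq f - c$ together with~(ii) and~(v),
\[
m_{(f-c)_+}^s(R^+(\gamma)) \geq m_{f-c}^s(R^+(\gamma)) = m_f^s(R^+(\gamma)) - c = M' - c ,
\]
with $M' := m_f^s(R^+(\gamma)) \in \mathbb{R}$. Therefore $F(c) \geq \max\{\, c - M,\ M' - c \,\}$, and the constraint $F(c_i) < I + 1$ forces $M' - I - 1 < c_i < I + 1 + M$ for all $i$, so the sequence is bounded.

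By the Bolzano--Weierstrass theorem there is a subsequence $(c_{i_k})_{k \in \mathbb{N}}$ converging to some $c_R \in \mathbb{R}$. Since $t \mapsto (t)_\pm$ is $1$-Lipschitz, $(f-c_{i_k})_\pm \to (f-c_R)_\pm$ uniformly on $R$ as $k \to \infty$. Moreover, whenever $g' - \varepsilon \leq g \leq g' + \varepsilon$ almost everywhere on a set $E$ of finite positive measure with $m_{g'}^s(E)$ finite, properties~(ii) and~(v) give $|m_g^s(E) - m_{g'}^s(E)| \leq \varepsilon$. Applying this with $\varepsilon = |c_{i_k} - c_R| \to 0$ shows that $m_{(f-c_{i_k})_\pm}^s(R^\pm(\gamma)) \to m_{(f-c_R)_\pm}^s(R^\pm(\gamma))$, hence $F(c_{i_k}) \to F(c_R)$. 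Since also $F(c_{i_k}) \to I$, we conclude $F(c_R) = I = \inf_{c \in \mathbb{R}} F(c)$, which is the first claim. Taking the supremum over all parabolic rectangles $R \subset \Omega_T$ in the identity $F(c_R) = \inf_{c \in \mathbb{R}} F(c)$ and recalling the definition of $\norm{f}_{\PBMO_{\gamma,0,s}^{+}(\Omega_T)}$ yields the second assertion.

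The step I expect to be the main obstacle is the boundedness of the minimizing sequence: one has to recognize that monotonicity and the constant-shift rule for the maximal $s$-median can replace the $L^q$ algebra used in Lemma~\ref{PBMO_constant}. The remaining ingredients---extracting a limit and pushing it through the medians via uniform convergence---are immediate from properties~(ii) and~(v). If one wishes to drop the a priori almost everywhere finiteness of $f$, the degenerate cases in which some of the medians equal $\pm\infty$ must be treated separately, but in each such case $F$ is easily seen to attain its infimum as well.
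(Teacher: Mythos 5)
Your proof is correct and follows essentially the route the paper intends (the paper omits this proof, stating it is ``similar to that of Lemma~\ref{PBMO_constant}''): extract a minimizing sequence, prove boundedness, pass to a convergent subsequence, and show continuity of the functional, with the $L^q$ convexity manipulations replaced by monotonicity (ii) and the translation rule (v) from Lemma~\ref{medianprops}. The key adaptation --- bounding $F(c)$ below by $\max\{c-M,\,M'-c\}$ via $(f-c)_\pm\ge \pm(f-c)$ together with (ii), (v), and the nonnegativity of the maximal $s$-median of a nonnegative function --- is exactly the right replacement for the step that fails to transfer verbatim.

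One caveat beyond the degenerate cases you flag at the end: your assertion that $M=-m_{-f}^s(R^-(\gamma))$ and $M'=m_f^s(R^+(\gamma))$ lie in $\mathbb{R}$ uses that the maximal $s$-median of a finite a.e.\ function is finite. This is automatic for $0<s<1$ (as $a\to-\infty$, $\lvert\{g>a\}\rvert\to\lvert E\rvert>s\lvert E\rvert$), but the lemma allows $s=1$, where $m_g^1(E)=\essinf_E g$ can equal $-\infty$ even for $g$ finite a.e. In that endpoint case your lower bound on $F$ degenerates and the boundedness argument does not close; the conclusion nevertheless holds by direct inspection, since then $m^1_{(f-c)_+}(R^+(\gamma))=(\essinf_{R^+(\gamma)}f-c)_+\equiv 0$, so $F(c)=(c-\esssup_{R^-(\gamma)}f)_+$ is monotone in $c$ and its infimum is attained. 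This is a technicality at the endpoint $s=1$, which plays no role in the applications (where $s\le s_0$ is small), but it should be noted if the statement is to be proved for the full range $0<s\le 1$.
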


The following John--Nirenberg lemma is a counterpart of Corollary~\ref{local_pJN}. We apply the same decomposition argument as in the proof of Theorem~\ref{reshetnyak}.

\begin{theorem}
\label{local_pJN_m}
Let $R \subset \mathbb{R}^{n+1}$ be a parabolic rectangle, $0 \leq \gamma < 1$, $\gamma < \alpha < 1$ and $0 < s \leq s_0$, where $s_0$ is a small positive number.
Assume that $f \in \PBMO_{\gamma,0,s}^+(R)$ and let   $\norm{f} = \norm{f}_{\PBMO_{\gamma,0,s}^{+}(R)}$.
Then there exist constants $c_R\in\mathbb R$, $A=A(n,p,\gamma,\alpha)$ and $B=B(n,p,\gamma,\alpha)$ such that
\[
\lvert R^{+}(\alpha) \cap \{ (f-c_R)_+ > \lambda \} \rvert \leq A e^{-B\lambda/\norm{f}} \lvert R^{+}(\alpha) \rvert 
\]
and
\[
\lvert R^{-}(\alpha) \cap \{ (f-c_R)_- > \lambda \} \rvert \leq  A e^{-B\lambda/\norm{f}} \lvert R^{-}(\alpha) \rvert 
\]
for every $\lambda > 0$.
\end{theorem}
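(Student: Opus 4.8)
The plan is to repeat the Calder\'on--Zygmund decomposition from the proof of Theorem~\ref{reshetnyak}, replacing every integral average by a maximal $s$-median and every elementary inequality for integrals by the appropriate property from Lemma~\ref{medianprops}: translation invariance~(v), monotonicity in the function~(ii), subadditivity~(viii), the measure-comparison inequality~(iii) and the behaviour under disjoint unions~(x), together with the median Lebesgue differentiation theorem (Lemma~\ref{leb.diff.medians}). By property~(vi) I may scale $f$ so that $\norm{f}$ takes any convenient fixed value, and by~(v) and Lemma~\ref{lemma.mpbmonorm} I may assume that $R_0=R$ is centred at the origin with $c_{R_0}=0$; as in Theorem~\ref{reshetnyak} it is enough to treat $(f-c_R)_+$ on $R^+(\alpha)$, the remaining inequality following by applying the result to $-f(x,-t)$.

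First I would run the decomposition verbatim: with $m$ the smallest integer satisfying $3+\alpha\le 2^{pm+1}(\alpha-\gamma)$, partition $S^+_0=R^+_0(\alpha)$ into subrectangles $S^+_i$, select those with $\lambda<c_{R_i}$ (the minimal constant of Lemma~\ref{lemma.mpbmonorm} for the rectangle $R_i$ sharing the top of $S^+_i$) and recurse in the unselected ones. All the purely geometric facts carry over unchanged --- the time-length bounds, the nesting $R_i\subset R^+_{i-1}(\gamma)$ of~\eqref{subset}, and the measure comparisons~\eqref{measure1}--\eqref{measure2} --- and passing to a disjoint subfamily $\{\widetilde S^-_j\}$ of the lower parts gives $\abs{S(\lambda)}\le c_1\sum_j\abs{\widetilde S^-_j}$ with $c_1$ geometric, where $S(\lambda)=\bigcup_i S^+_i$.

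The core is then the measure estimate $\abs{S(\lambda)}\le(\text{decay})\cdot\abs{S(\delta)}$ for $\delta<\lambda$. A selected $\widetilde S^-_j$ has a level-$\delta$ ancestor $S^+_{k^-}$ with $c_{R_{k^-}}\le\delta$ and $\widetilde S^-_j\subset R^+_{k^-}(\gamma)$ by~\eqref{subset}. Using $c_{R_j}\le f_+ + (f-c_{R_j})_-$ pointwise (valid since $c_{R_j}>\lambda>0$), then~(ii), then~(viii) with parameters $1-c_0 s$ and $c_0 s$ (with $c_0$ the geometric ratio $\abs{R^-_j(\gamma)}/\abs{\widetilde S^-_j}$), and then~(iii) together with the norm bound $m_{(f-c_{R_j})_-}^s(R^-_j(\gamma))\le\norm{f}$, I obtain $\lambda<c_{R_j}\le m_{f_+}^{1-c_0 s}(\widetilde S^-_j)+\norm{f}$, hence $\bigl|\{f_+>\lambda-\norm{f}\}\cap\widetilde S^-_j\bigr|\ge(1-c_0 s)\abs{\widetilde S^-_j}$. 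On the other hand $c_{R_{k^-}}\le\delta$ and the norm bound at $R_{k^-}$ force $\bigl|\{f>\delta+\norm{f}\}\cap R^+_{k^-}(\gamma)\bigr|\le s\abs{R^+_{k^-}(\gamma)}$, so when $\lambda\ge\delta+2\norm{f}$ the first set lies inside the second; summing over the disjoint $\widetilde S^-_j$ in each level-$\delta$ cell and using~\eqref{measure1}--\eqref{measure2} yields $(1-c_0 s)\sum_j\abs{\widetilde S^-_j}\le c_2 s\abs{S(\delta)}$, i.e.\ $\abs{S(\lambda)}\le c_1 c_2 s(1-c_0 s)^{-1}\abs{S(\delta)}$ for every $\lambda\ge\delta+2\norm{f}$. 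Taking $s_0=s_0(n,p,\gamma,\alpha)$ small enough that all median parameters appearing in the argument stay in $(0,1]$ and that $c_1 c_2 s_0(1-c_0 s_0)^{-1}\le\tfrac12$ gives the halving $\abs{S(\lambda)}\le\tfrac12\abs{S(\delta)}$ whenever $\lambda\ge\delta+2\norm{f}$; iterating it, with the trivial bound $\abs{S(\cdot)}\le\abs{R^+(\alpha)}$, produces $\abs{S(\lambda)}\le A_0 e^{-B\lambda/\norm{f}}\abs{R^+(\alpha)}$ with $B=\tfrac12\log 2$.

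To close, if $(x,t)\in S^+_0\setminus S(\lambda)$ there is a sequence $S^+_l\ni(x,t)$ converging regularly to $(x,t)$ with $c_{R_l}\le\lambda$; combining~(ii),~(iii), the norm bound over $R^+_l(\gamma)$ and Lemma~\ref{leb.diff.medians} gives $f(x,t)\le\lambda+\norm{f}$ for a.e.\ such point, so $\{(f-c_R)_+>\lambda+\norm{f}\}\cap S^+_0\subset S(\lambda)$ up to a null set; a shift in $\lambda$ and the usual absorption of the bounded range $\lambda\le C\norm{f}$ into a larger constant $A$ (as in Corollary~\ref{local_pJN}) then give the claimed bound for all $\lambda>0$, and the negative-part inequality follows by applying it to $-f(x,-t)$. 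The step I expect to be the main obstacle is the measure estimate: maximal medians are neither additive nor monotone in the domain and, crucially, insensitive to tails, so the chain of median inequalities can only deliver a decay factor of size $\sim s$, \emph{independent} of the gap $\lambda-\delta$ --- in sharp contrast with the factor $\sim(\lambda^q-\delta^q)^{-1}$ available for integral averages in Theorem~\ref{reshetnyak}. This is exactly why the smallness hypothesis $0<s\le s_0$ cannot be avoided in this approach, and making the bookkeeping of the median parameters (each split under~(viii) and shrunk by geometric constants under~(iii)) close is the new technical content.
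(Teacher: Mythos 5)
Your proposal is correct, and the core measure estimate is handled by a genuinely different (and in my view cleaner) argument than the paper's. The paper's proof of Theorem~\ref{local_pJN_m} introduces a good/bad dichotomy: it defines $\mathcal K=\{k:\lvert S^+_k\rvert\le 2c_1\sum_{j\in\mathcal J_k}\lvert\widetilde S^-_j\rvert\}$, bounds the contribution of $k\notin\mathcal K$ trivially, and for $k\in\mathcal K$ applies Lemma~\ref{medianprops}(x) to the disjoint union $\bigcup_{j\in\mathcal J_k}\widetilde S^-_j$ and then Lemma~\ref{medianprops}(iii) to transfer the median of $f_+$ up to $R^+_{k^-}(\gamma)$ — this transfer is exactly where the $\mathcal K$-restriction (and hence the smallness $s\le r/\tilde c$) is needed, and it yields $\lambda-1\le 1+\delta$, which after the shift $\lambda\mapsto\lambda+a$ with $a=2c_1c_2+1$ forces the good set $\mathcal K$ to be empty and gives the halving. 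You instead read the median inequalities at the level-set level: from $\lambda<c_{R_j}$ and Lemma~\ref{medianprops}(v),(viii),(iii) you deduce $m_{f_+}^{1-c_0 s}(\widetilde S^-_j)>\lambda-\norm f$, hence $\lvert\{f_+>\lambda-\norm f\}\cap\widetilde S^-_j\rvert\ge(1-c_0s)\lvert\widetilde S^-_j\rvert$ (here you need $c_0 s<1$, which is part of the $s_0$ restriction); meanwhile the $\PBMO$ bound at the parent $R_{k^-}$ with $c_{R_{k^-}}\le\delta$ caps $\lvert\{f>\delta+\norm f\}\cap R^+_{k^-}(\gamma)\rvert\le s\lvert R^+_{k^-}(\gamma)\rvert$; since $\bigcup_{j\in\mathcal J_k}\widetilde S^-_j\subset R^+_{k^-}(\gamma)$ by~\eqref{subset}, the two combine (for $\lambda\ge\delta+2\norm f$) and summing over the disjoint $\widetilde S^-_j$ and disjoint $S^+_k$ gives $(1-c_0s)\sum_j\lvert\widetilde S^-_j\rvert\le s c_2\lvert S(\delta)\rvert$, hence $\lvert S(\lambda)\rvert\le c_1c_2 s(1-c_0s)^{-1}\lvert S(\delta)\rvert$. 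This bypasses properties (iii) applied to unions and (x) altogether, avoids the $\mathcal K$-bookkeeping, and makes completely explicit why $s_0$ must depend only on $n,p,\gamma,\alpha$ through $c_0,c_1,c_2$. Your diagnostic remark — that medians only yield a flat decay factor $\sim s$ rather than a gap-dependent $\sim 1/(\lambda-\delta)$, and that this is why the smallness of $s$ is essential — is accurate, and your argument shows that a flat factor $\le\tfrac12$ suffices. The only places to be pedantic are the half-open/closed level-set conventions in the median definition (Definition~\ref{maxmedian} gives $\lvert\{g>b\}\cap E\rvert\ge t\lvert E\rvert$ whenever $m^t_g(E)>b$, which is what you use) and the monotonicity needed for the inclusion $\{f_+>\lambda-\norm f\}\subset\{f>\delta+\norm f\}$, both of which are fine as written. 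The closing step via Lemma~\ref{leb.diff.medians} and the absorption of small $\lambda$ also match the paper.
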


\begin{proof}
We use the same notation as in the proof of Theorem~\ref{reshetnyak} until equation~\eqref{start} with the exception that we assume $\norm{f} = 1$.
We proceed from there.
It holds that
\begin{equation}
\label{start_m}
\lvert S(\lambda) \rvert = \Big\lvert \bigcup_i S^+_i \Big\rvert \leq c_1 \sum_j \lvert \widetilde{S}^-_j \rvert ,
\end{equation}
where $c_1 = 3(7+\alpha)/(1-\alpha)$.
Take $\lambda > \delta > 0$ and form $\{S^+_k\}_k$ for $\delta$.
Each $S^+_i$ is contained in a unique $S^+_k$.
Set $\mathcal{J}_k = \{ j: \widetilde{S}^+_j \subset S^+_k \}$ and 
define 
\[\mathcal{K} = \Bigl\{ k \in \mathbb{N} : \lvert S^+_k \rvert \leq 2 c_1 \sum_{j \in \mathcal{J}_k} \lvert \widetilde{S}^-_j \rvert \Bigr\} .
\]
By using properties (viii) and (iii) of Lemma~\ref{medianprops} together with~\eqref{measure2}, we obtain
\begin{align*}
\lambda &< c_{R_j} = m_{c_{R_j}}^{1}(\widetilde{S}^-_j) \leq
m_{(f-c_{R_j})_-}^{2s(1-\gamma)/(1-\alpha)}(\widetilde{S}^-_j) + m_{f_+}^{r}(\widetilde{S}^-_j) \\
&\leq m_{(f-c_{R_j})_-}^{s}(R^-_j(\gamma)) + m_{f_+}^{r}(\widetilde{S}^-_j) \\
&\leq \norm{f} + m_{f_+}^{r}(\widetilde{S}^-_j) = 1 + m_{f_+}^{r}(\widetilde{S}^-_j)
\end{align*}
for every $\widetilde{S}^-_j$, where $r=1 - 2s(1-\gamma)/(1-\alpha)$.
Since $\widetilde{S}^-_j$ are pairwise disjoint for $j \in \mathcal{J}_k$, property (x) of Lemma~\ref{medianprops} implies that
\[
\lambda - 1 \leq m_{f_+}^{r}\Bigl( \bigcup_{j \in \mathcal{J}_k} \widetilde{S}^-_j \Bigr)
\]
for every $k \in \mathbb{N}$.

Fix $k \in \mathcal{K}$. We have $\widetilde{S}^+_j \subset S^+_k$ for all $j \in \mathcal{J}_k$, where $S^+_k$ was obtained by subdividing a previous $S^+_{k^-}$ for which $a_{R_{k^-}} \leq \delta$. Hence, it holds that $\widetilde{S}^-_j \subset R_k$ for all $j \in \mathcal{J}_k$.
By \eqref{subset}, it follows that $\widetilde{S}^-_j \subset R^+_{k^-}(\gamma)$ for every $j \in \mathcal{J}_k$ and thus also $\bigcup_{j \in \mathcal{J}_k} \widetilde{S}^-_j \subset R^+_{k^-}(\gamma)$.
Using~\eqref{measure1} and~\eqref{measure2}, we get
\begin{equation}
\label{med_measure}
\lvert R^+_{k^-}(\gamma) \rvert 
\leq 2 \frac{1-\gamma}{1-\alpha} \lvert S^+_{k^-} \rvert \leq 2 \frac{1-\gamma}{1-\alpha} 2^{nm} \lceil 2^{pm} \rceil \lvert S^+_k \rvert
\leq c_2 \lvert S^+_k \rvert \leq 2 c_1 c_2 \Bigl\lvert \bigcup_{j \in \mathcal{J}_k} \widetilde{S}^-_j \Bigr\rvert
\end{equation}
for every $k \in \mathcal{K}$,
where
\[
c_2 = \frac{1-\gamma}{1-\alpha} 2^{2+n+p} \lp\frac{3+\alpha}{2(\alpha- \gamma)}\rp^{1 + \frac{n}{p}} .
\]
By applying (iii), (ii) and (v) of  Lemma~\ref{medianprops}, we have
\begin{equation}
\label{med_lambda}
\lambda - 1 
\leq m_{f_+}^{r}\Bigl( \bigcup_{j \in \mathcal{J}_k} \widetilde{S}^-_j \Bigr) \leq m_{f_+}^{r/\tilde{c}}( R^+_{k^-}(\gamma) ) \\
\leq m_{(f - a_{R_{k^-}})_+}^{r/\tilde{c}}( R^+_{k^-}(\gamma) ) + \delta 
\leq 1 + \delta 
\end{equation}
for $k \in \mathcal{K}$, where $\tilde{c} = 2 c_1 c_2$ and whenever $s \leq r / \tilde{c}$.
By combining estimates~\eqref{med_measure} and~\eqref{med_lambda}, we obtain
\[
(\lambda - \delta - 1) \sum_{j \in \mathcal{J}_k} \lvert \widetilde{S}^-_j \rvert \leq \lvert R^+_{k^-}(\gamma) \rvert \leq c_2 \lvert S^+_k \rvert 
\]
for $k \in \mathcal{K}$. Thus, whenever $\lambda > \delta + 1$, we have
\[
\sum_{k \in \mathcal{K}} \sum_{j \in \mathcal{J}_k} \lvert \widetilde{S}^-_j \rvert \leq \frac{c_2}{\lambda - \delta -1} \sum_{k \in \mathcal{K}} \lvert S^+_k \rvert .
\]

On the other hand, if $k \notin \mathcal{K}$, we have
\[
\sum_{j \in \mathcal{J}_k} \lvert \widetilde{S}^-_j \rvert \leq \frac{1}{2c_1} \lvert S^+_k \rvert ,
\]
which implies
\[
\sum_{k \notin \mathcal{K}} \sum_{j \in \mathcal{J}_k} \lvert \widetilde{S}^-_j \rvert \leq \frac{1}{2c_1} \sum_{k \notin \mathcal{K}} \lvert S^+_k \rvert .
\]
By combining the cases $k \in \mathcal{K}$ and $k \notin \mathcal{K}$, we obtain
\[
\sum_{j}  \lvert \widetilde{S}^-_j \vert = \sum_{k} \sum_{j \in \mathcal{J}_k} \lvert \widetilde{S}^-_j \rvert \leq \frac{c_2}{\lambda - \delta -1} \sum_{k \in \mathcal{K}} \lvert S^+_k \rvert + \frac{1}{2c_1} \sum_{k \notin \mathcal{K}} \lvert S^+_k \rvert ,
\]
whenever $\lambda > \delta + 1$.
Applying~\eqref{start_m}, we arrive at
\[
\lvert S(\lambda) \rvert \leq \frac{c_1 c_2}{\lambda - \delta -1} \sum_{k \in \mathcal{K}} \lvert S^+_k \rvert + \frac{1}{2} \sum_{k \notin \mathcal{K}} \lvert S^+_k \rvert ,
\]
whenever $\lambda > \delta + 1$.
Set $a = 2c_1 c_2 + 1$ and replace $\lambda$ and $\delta$ by $\lambda + a$ and $\lambda$, respectively.
We have
\begin{equation}
\label{med_iteration}
\lvert S(\lambda + a) \rvert \leq \frac{1}{2} \sum_{k \in \mathcal{K}} \lvert S^+_k \rvert + \frac{1}{2} \sum_{k \notin \mathcal{K}} \lvert S^+_k \rvert = \frac{1}{2} \sum_k \lvert S^+_k \rvert \leq \frac{1}{2} \lvert S(\lambda) \rvert .
\end{equation}
Assume that $\lambda \geq a$. Then there exists an integer $N \in \mathbb{Z}_+$ such that $Na \leq \lambda < (N+1)a$.
A recursive application of~\eqref{med_iteration} gives
\[
\lvert S(\lambda) \rvert 
\leq \lvert S(Na) \rvert \leq \frac{1}{2^{N-1}} \lvert S(a) \rvert 
\leq 2^{-\frac{\lambda}{a}+2} \lvert S_0^+ \rvert = 4 e^{-\frac{\lambda}{2c_1 c_2 +1} \log 2} \lvert S_0^+ \rvert .
\]
Hence, we have
\[
\lvert S(\lambda) \rvert \leq A e^{-2B\lambda/\norm{f}}  \lvert S_0^+ \rvert 
\]
for $\lambda \geq a$, where $A=4$ and $B = \frac{1}{2} \log 2/(2 c_1 c_2 + 1)$.

If $(x,t) \in S^+_0 \setminus S(\lambda)$, then there exists a sequence $\{S^+_l\}_{l\in\mathbb N}$ of subrectangles  containing $(x,t)$ such that $c_{R_l} \leq \lambda $ and $\lvert S^+_l \rvert \to 0$ as $l \to \infty$.
By (ii) and (v) of Lemma~\ref{medianprops}, we have
\[
m_{f_+}^{2s(1-\gamma)/(1-\alpha)}(S^+_l) \leq  m_{(f-c_{R_l})_+}^{2s(1-\gamma)/(1-\alpha)}(S^+_l) + \lambda \leq 1 + \lambda .
\]
Lemma~\ref{leb.diff.medians} then further implies that
$f(x,t)_+ \leq 1 + \lambda$ for almost every $(x,t) \in S^+_0 \setminus S(\lambda)$.
It follows that
\[
\{ (x,t) \in S^+_0 : f(x,t)_+ > 1 + \lambda \} \subset S(\lambda)
\]
up to a set of measure zero.
For $\lambda \geq 2$, we have $\lambda \geq 1 + \frac{\lambda}{2}$. We conclude that
\[
\lvert S^+_0 \cap \{ f_+ > \lambda \} \rvert 
\leq \lvert S^+_0 \cap \{ f_+ > 1 + \tfrac{\lambda}{2} \} \rvert 
\leq \lvert S(\tfrac{\lambda}2) \rvert 
\leq A e^{-B\lambda/\norm{f}} \lvert S_0^+ \rvert 
\]
for every $\lambda \geq 2a$.
If $0<\lambda < 2a$, the claim follows from the estimate
\[
\lvert S^+_0 \cap \{ f_+ > \lambda \} \rvert \leq e^1 e^{-1} \lvert S^+_0 \rvert \leq e^1 e^{-\frac{1}{2a}\lambda/\norm{f}} \lvert S^+_0 \rvert .
\]

Finally, we discuss the restriction on the median level parameter $s$,
\[
s \leq \frac{r}{\tilde{c}} = \frac{1 - 2\tfrac{1-\gamma}{1-\alpha} s}{\tilde{c}},
\quad\text{that is,}\quad
s \leq \frac{1}{\tilde{c} + 2\tfrac{1-\gamma}{1-\alpha}} = s_0.
\]
The proof is complete. 
\end{proof}

The following John--Nirenberg inequality is an analogy of Theorem~\ref{global_pJN} and its proof uses the same chaining argument.

\begin{theorem}
\label{global_pJN_m}
Let $R \subset \mathbb{R}^{n+1}$ be a parabolic rectangle, $0 < \gamma <1$, $-1 < \rho \leq \gamma$, $-\rho < \sigma \leq \gamma$ and $0<s \leq s_0$.
Assume that $f \in \PBMO_{\gamma,0,s}^+(R)$ and let  $\norm{f} = \norm{f}_{\PBMO_{\gamma,0,s}^{+}(R)}$.
Then there exist constants $c \in \mathbb{R}$, $A=A(n,p,\gamma,\rho,\sigma)$ and $B=B(n,p,\gamma,\rho,\sigma)$ such that 
\[
\lvert R^{+}(\rho) \cap \{ (f-c)_+ > \lambda \} \rvert \leq A e^{-B(\lambda/\norm{f})^q } \lvert R^{+}(\rho) \rvert
\]
and
\[
\lvert R^{-}(\sigma) \cap \{ (f-c)_- > \lambda \} \rvert 
\leq  A e^{-B (\lambda/\norm{f})^q } \lvert R^{-}(\sigma) \rvert 
\]
for every $\lambda > 0$.
\end{theorem}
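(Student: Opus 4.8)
The plan is to run the chaining argument from the proof of Theorem~\ref{global_pJN} essentially verbatim on the geometric side, replacing every occurrence of an integral average and every appeal to Corollary~\ref{local_pJN} by the corresponding median statement from Lemma~\ref{medianprops} and Theorem~\ref{local_pJN_m}. First I would translate so that the center of $R_0=R$ is the origin, and, using property~(vi) of Lemma~\ref{medianprops} (positive homogeneity of the maximal median), assume $\norm{f}=1$. Fix an auxiliary level $\gamma<\alpha<1$. Then I would reuse the construction of Theorem~\ref{global_pJN} without change: partition $R_0^+(\rho)$ into the small rectangles $U^+_{i,j}$ with their top‑sharing rectangles $R_{i,j}$, build for each $U^+_{i,j}$ the spatial chain of cubes, thicken it in time to a chain of parabolic rectangles $\{P_k\}_{k=0}^{N+M_j}$ with $P_0=R_{i,j}$ and $P_{N+M_j}=\mathfrak R=\mathfrak R_\rho$ the common central rectangle inside $R_0$, where the consecutive overlaps satisfy $2^{-(n+1)}\le \lvert P_{k-1}^-\cap P_k^+\rvert/\lvert P_k^+\rvert\le 1$ and all $P_k$ in a fixed chain are congruent (spatial side $l=L/2^m$, time length $2l^p$). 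Nothing in that construction uses the precise form of the norm, so it transfers unchanged.

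The heart of the argument is the chaining estimate for the constants $c_{R_{i,j}}$ and $c_{\mathfrak R}$. Since $c_{P_{k-1}}-c_{P_k}$ is a real number, it equals its own maximal $1$-median over the overlap $E_k:=P_{k-1}^-\cap P_k^+$; applying the pointwise bound $(c_{P_{k-1}}-c_{P_k})_+\le (f-c_{P_{k-1}})_-+(f-c_{P_k})_+$ together with monotonicity~(ii), subadditivity~(viii) (splitting $1=\tfrac12+\tfrac12$), the dilation property~(iii) to pass from $E_k$ to $R^-_{P_{k-1}}(\gamma)$ and $R^+_{P_k}(\gamma)$ (whose measures are at most $\tilde c=2^{n+1}(1-\gamma)/(1-\alpha)$ times $\lvert E_k\rvert$, using $\lvert P_{k-1}^-\rvert=\lvert P_k^+\rvert$), and finally monotonicity in the level~(i) provided $s\le \tfrac{1}{2\tilde c}$, one obtains
\[
(c_{P_{k-1}}-c_{P_k})_+\le m_{(f-c_{P_{k-1}})_-}^s\bigl(R^-_{P_{k-1}}(\gamma)\bigr)+m_{(f-c_{P_k})_+}^s\bigl(R^+_{P_k}(\gamma)\bigr)\le 2\norm{f},
\]
the last step being Lemma~\ref{lemma.mpbmonorm}. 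Telescoping $(c_{R_{i,j}}-c_{\mathfrak R})_+\le\sum_{k=1}^{N+M_j}(c_{P_{k-1}}-c_{P_k})_+$ and inserting the bound $N+1+M_j\le 2^{mp+2}$ already established in Theorem~\ref{global_pJN} yields $(c_{R_{i,j}}-c_{\mathfrak R})_+\le C\norm{f}$ with $C=C(n,p,\gamma,\rho,\sigma)$ uniform in $i,j$.

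With this in hand the end game copies Theorem~\ref{global_pJN}. On each $R^+_{i,j}(\alpha)$ write $(f-c_{\mathfrak R})_+\le (f-c_{R_{i,j}})_++(c_{R_{i,j}}-c_{\mathfrak R})_+$; for $\lambda\ge 2C\norm{f}$ the constant part contributes nothing, and summing the local estimate of Theorem~\ref{local_pJN_m} (applied to $R_{i,j}$ with $\norm{f}_{\PBMO^+_{\gamma,0,s}(R_{i,j})}\le\norm{f}$) over $i,j$, together with $\sum_{i,j}\lvert R^+_{i,j}(\alpha)\rvert=\tau\lvert R^+_0(\rho)\rvert<2\lvert R^+_0(\rho)\rvert$, gives
\[
\lvert R^+_0(\rho)\cap\{(f-c_{\mathfrak R})_+>\lambda\}\rvert\le 2A\,e^{-B\lambda/(2\norm{f})}\lvert R^+_0(\rho)\rvert ,
\]
and for $0<\lambda<2C\norm{f}$ the bound is trivial after the usual $e^1e^{-1}$ trick; relabeling the constants proves the first inequality. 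The second inequality follows by running the same argument in reversed time, additionally lengthening each chain so that the central rectangle $\mathfrak R_\sigma$ of $R_0^-(\sigma)$ coincides with $\mathfrak R_\rho$, which as in Theorem~\ref{global_pJN} at most doubles $C$.

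The main obstacle I expect is purely the bookkeeping of the median level parameters along the chain: the repeated use of the dilation property~(iii) and the subadditivity~(viii) must never push the effective level below $s$, and this is precisely what forces the hypothesis $s\le s_0$, where $s_0$ is now taken to be $\min$ of the threshold from Theorem~\ref{local_pJN_m} and $(1-\alpha)/\bigl(2^{n+1}(1-\gamma)\bigr)$ for the fixed auxiliary $\alpha$. Everything else is a transcription of the already-established chaining computation, the only point worth double‑checking being the elementary fact that all rectangles in a single chain are congruent, so that $\lvert P_{k-1}^-\rvert=\lvert P_k^+\rvert$ and the dilation constant $\tilde c$ is genuinely uniform in $k$.
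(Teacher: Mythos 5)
Your proposal is correct and follows essentially the same route as the paper: reuse the chain from Theorem~\ref{global_pJN}, estimate the telescoping difference of constants via the median properties (v), (viii), (iii), (i), and finish by splitting $(f-c_{\mathfrak R})_+$ and summing the local estimate of Theorem~\ref{local_pJN_m} over the pieces $R^+_{i,j}(\alpha)$. Your dilation constant $\tilde c=2^{n+1}(1-\gamma)/(1-\alpha)$ is in fact slightly more careful than the paper's, which writes the intermediate threshold as $1/2^{n+2}$ without making the $\alpha$-to-$\gamma$ enlargement explicit; in either formulation the threshold $s_0$ inherited from Theorem~\ref{local_pJN_m} is already small enough.
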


\begin{proof}
We use the same notation as in the proof of Theorem~\ref{global_pJN} until the point where $(c_{R_{i,j}} - c_{\mathfrak{R}} )^q_+$ is estimated.
By (v), (viii), (iii) and (i)  of  Lemma~\ref{medianprops} in this order, we obtain
\begin{align*}
(c_{R_{i,j}} - c_{\mathfrak{R}} )_+ 
&= (c_{P_0} - c_{P_{N+M_j}} )_+  
\leq \sum_{k=1}^{N+M_j} (c_{P_{k-1}} - c_{P_{k}} )_+\\
&= \sum_{k=1}^{N+M_j} m_{(c_{P_{k-1}} - c_{P_{k}} )_+}^{1}(P_{k-1}^- \cap P_k^+) \\
&\leq \sum_{k=1}^{N+M_j} \lp  m_{(c_{P_{k-1}} - f )_+}^{1/2}(P_{k-1}^- \cap P_k^+) +  m_{(f- c_{P_{k}} )_+}^{1/2}(P_{k-1}^- \cap P_k^+)  \rp \\
&\leq \sum_{k=1}^{N+M_j} \lp  m_{(f - c_{P_{k-1}} )_-}^{\tilde{\eta}_j /2}(P_{k-1}^-) +  m_{(f- c_{P_{k}} )_+}^{\tilde{\eta}_j /2}(P_k^+)  \rp \\
&\leq \sum_{k=0}^{N+M_j} \lp m_{(f - c_{P_{k}} )_-}^{ 1/2^{n+2}}(P_{k}^-) +  m_{(f- c_{P_{k}} )_+}^{ 1/2^{n+2}}(P_k^+) \rp \\
&\leq (N+1+M_j) \norm{f} ,
\end{align*}
whenever $s \leq1/2^{n+2}$. 
This is satisfied by the assumption $s \leq s_0$.
We observe that
\begin{align*}
N+1+M_j &= \frac{L}{l}+M_j \leq 2^m + (j-1) \frac{1-\alpha}{1+\alpha} \\
&\leq 2^m + \frac{(1-\rho)2^{mp}}{1-\alpha} \frac{1-\alpha}{1+\alpha} 
\leq 2^m + 2^{mp+1} \leq 2^{mp+2} \\
&\leq 2^{\frac{2p}{p-1} + 3p +2} \lp \frac{1+\alpha}{\rho+\sigma} \rp^\frac{p}{p-1} \lp \frac{1+\alpha}{1-\alpha} \rp^p = C
\end{align*}
for every $j$.
Hence, it holds that $(c_{R_{i,j}} - c_{\mathfrak{R}} )_+ \leq C \norm{f} $.
The proof now proceeds in the exactly same way as in Theorem~\ref{global_pJN} except applying Theorem~\ref{local_pJN_m} instead of Corollary~\ref{local_pJN}. Thus, we can stop here.
\end{proof}

As a corollary of Theorem~\ref{global_pJN_m}, the median-type parabolic $\BMO$ coincides with the classical integral-type parabolic $\BMO$, compare with Corollary~\ref{PBMOequivalent}.
In particular, it follows that all results for the integral-type parabolic $\BMO$ also hold for the median-type parabolic $\BMO$ and vice versa.

\begin{corollary}
\label{PBMOequivalent_m}
Let $\Omega_T \subset \mathbb{R}^{n+1}$ be a space-time cylinder, $0 < \gamma <1$, $0<q<\infty$, $0 < \rho < 1$ and $0 < s \leq s_0 $. 
Then there exist constants $c_1=c_1(n,p,q,\gamma,\rho,s)$ and $c_2=c_2(n,p,q,\gamma,\rho,s)$ such that 
\[
c_1 \norm{f}_{\PBMO_{\gamma,0,s}^{+}(\Omega_T)} \leq \norm{f}_{\PBMO_{\rho,q}^{+}(\Omega_T)} \leq c_2 \norm{f}_{\PBMO_{\gamma,0,s}^{+}(\Omega_T)} .
\]
\end{corollary}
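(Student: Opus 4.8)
The plan is to prove the two inequalities of the corollary separately, and in both cases it suffices to work with the integral‑type space carrying the \emph{same} time lag $\gamma$ as the median‑type space: once the equivalence
\[
c_1'\,\norm{f}_{\PBMO_{\gamma,0,s}^{+}(\Omega_T)}\le\norm{f}_{\PBMO_{\gamma,q}^{+}(\Omega_T)}\le c_2'\,\norm{f}_{\PBMO_{\gamma,0,s}^{+}(\Omega_T)}
\]
is available, Corollary~\ref{PBMOequivalent}, applied twice through the integral‑type space with time lag $\max\{\gamma,\rho\}$, yields $\norm{f}_{\PBMO_{\gamma,q}^{+}(\Omega_T)}\simeq\norm{f}_{\PBMO_{\rho,q}^{+}(\Omega_T)}$ for every $0<\rho<1$ and the corollary follows. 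The left inequality in the display will come from property~(ix) of Lemma~\ref{medianprops}, and the right inequality from the John--Nirenberg inequality for medians, Theorem~\ref{global_pJN_m}, together with Cavalieri's principle.

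For the left inequality, fix a parabolic rectangle $R\subset\Omega_T$ and let $c_R$ be the constant from Lemma~\ref{PBMO_constant} realizing the infimum in $\norm{f}_{\PBMO_{\gamma,q}^{+}(\Omega_T)}$; this is meaningful since $f\in L^q_{\mathrm{loc}}(\Omega_T)$. Applying property~(ix) of Lemma~\ref{medianprops} to the nonnegative functions $(f-c_R)_\pm$ gives
\[
m_{(f-c_R)_+}^s(R^+(\gamma))+m_{(f-c_R)_-}^s(R^-(\gamma))\le\lp s^{-1}\dashint_{R^+(\gamma)}(f-c_R)_+^q\rp^{\frac1q}+\lp s^{-1}\dashint_{R^-(\gamma)}(f-c_R)_-^q\rp^{\frac1q}.
\]
By the elementary inequality $a^{1/q}+b^{1/q}\le\max\{1,2^{1-1/q}\}(a+b)^{1/q}$ for $a,b\ge0$, the right‑hand side is bounded by $s^{-1/q}\max\{1,2^{1-1/q}\}\,\norm{f}_{\PBMO_{\gamma,q}^{+}(\Omega_T)}$. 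Taking first the infimum over $c\in\mathbb R$ and then the supremum over all parabolic rectangles $R\subset\Omega_T$ yields the left inequality.

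For the right inequality, fix again a parabolic rectangle $R\subset\Omega_T$ and apply Theorem~\ref{global_pJN_m} to $R$ with the admissible choice of parameters $\rho=\sigma=\gamma$ (legitimate since $0<\gamma<1$), noting $\norm{f}_{\PBMO_{\gamma,0,s}^{+}(R)}\le\norm{f}_{\PBMO_{\gamma,0,s}^{+}(\Omega_T)}$. This produces a constant $c_R\in\mathbb R$ and constants $A,B$ depending only on $n,p,\gamma$ such that
\[
\abs{R^{\pm}(\gamma)\cap\{(f-c_R)_\pm>\lambda\}}\le A\,e^{-B\lambda/\norm{f}_{\PBMO_{\gamma,0,s}^{+}(\Omega_T)}}\,\abs{R^{\pm}(\gamma)}
\]
for every $\lambda>0$. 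Cavalieri's principle together with the change of variables $u=B\lambda/\norm{f}_{\PBMO_{\gamma,0,s}^{+}(\Omega_T)}$ then gives
\[
\dashint_{R^{\pm}(\gamma)}(f-c_R)_\pm^q\le A\,\Gamma(q+1)\,B^{-q}\,\norm{f}_{\PBMO_{\gamma,0,s}^{+}(\Omega_T)}^q .
\]
Adding the two estimates, extracting the $q$‑th root, and taking the infimum over $c\in\mathbb R$ and the supremum over all $R\subset\Omega_T$, we obtain $\norm{f}_{\PBMO_{\gamma,q}^{+}(\Omega_T)}\le\lp2A\,\Gamma(q+1)\,B^{-q}\rp^{1/q}\norm{f}_{\PBMO_{\gamma,0,s}^{+}(\Omega_T)}$, which is the right inequality.

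I do not expect a serious analytic obstacle here: the hard work — the Calder\'on--Zygmund‑type decomposition and the chaining construction — is contained in Theorems~\ref{local_pJN_m} and~\ref{global_pJN_m}, and what remains is parameter bookkeeping. The points needing care are that property~(ix) of Lemma~\ref{medianprops} uses only local $L^q$‑integrability, automatic for $f\in\PBMO_{\rho,q}^{+}(\Omega_T)$, so that the median‑type condition is genuinely the weaker one; that $\rho=\sigma=\gamma$ is admissible in Theorem~\ref{global_pJN_m}; and — the one non‑obvious move — that the reduction to time lag $\gamma$ on the integral side followed by an application of Corollary~\ref{PBMOequivalent} covers an arbitrary $0<\rho<1$, including $\rho>\gamma$, a range Theorem~\ref{global_pJN_m} cannot reach directly. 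Finally, the restriction $0<s\le s_0$ is inherited exactly from Theorem~\ref{global_pJN_m}.
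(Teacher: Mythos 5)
Your proof is correct and follows essentially the same route as the paper's: both directions are established first for the time lag $\gamma$ (the paper allows $0<\rho\le\gamma$ directly, you take $\rho=\gamma$), using Lemma~\ref{medianprops}~(ix) for the left inequality and Cavalieri's principle with Theorem~\ref{global_pJN_m} for the right, and then the full range $0<\rho<1$ is obtained by appealing to Corollary~\ref{PBMOequivalent}. The constants even agree (your $\Gamma(q+1)=q\,\Gamma(q)$ matches the paper's $q\,\Gamma(q)/q\cdot q$ after simplification), so this is the paper's argument with only a cosmetic reorganization.
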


\begin{proof}
Fix $0 < \gamma < 1$ and assume that $0 < \rho \leq \gamma$.
Let $R$ be a parabolic subrectangle of $\Omega_T$.
By Lemma~\ref{medianprops} (ix), we have
\begin{align*}
&m_{(f-c_{R})_+}^{s}(R^+(\gamma  )) + m_{(f-c_{R})_-}^{s}(R^-(\gamma)) \\
&\qquad\leq \lp s^{-1} \dashint_{R^+(\gamma)} (f-c_R)_+^q  \rp^{\frac{1}{q}} + \lp s^{-1} \dashint_{R^-(\gamma)} (f-c_R)_-^q  \rp^{\frac{1}{q}} \\
&\qquad\leq c_0 \lp \dashint_{R^+(\gamma)} (f-c_R)_+^q  + \dashint_{R^-(\gamma)} (f-c_R)_-^q \rp^\frac{1}{q} ,
\end{align*}
where $c_0 = s^{-\frac{1}{q}} \max\{1, 2^{1 -\frac{1}{q}} \}$.
We observe that
\begin{align*}
&\lp \dashint_{R^+(\gamma)} (f-c_R)_+^q + \dashint_{R^-(\gamma)} (f-c_R)_-^q \rp^\frac{1}{q} \\
&\qquad\leq \lp \frac{1-\rho}{1-\gamma} \rp^\frac{1}{q} \lp \dashint_{R^+(\rho)} (f-c_R)_+^q + \dashint_{R^-(\rho)} (f-c_R)_-^q \rp^\frac{1}{q} .
\end{align*}
Taking supremum over all parabolic subrectangles $R \subset \Omega_T$, we obtain
\[
\norm{f}_{\PBMO_{\gamma,0,s}^{+}(\Omega_T)} \leq c_0 \lp \frac{1-\rho}{1-\gamma} \rp^\frac{1}{q} \norm{f}_{\PBMO_{\rho,q}^{+}(\Omega_T)}
\]
for $0 < \rho \leq \gamma$.

For the second inequality, Cavalieri's principle and Theorem~\ref{global_pJN_m} imply that
\begin{align*}
\dashint_{R^+(\rho)} (f-c)_+^q 
&= \frac{q}{\lvert R^+(\rho) \rvert} \int_0^\infty \lambda^{q-1} \lvert R^{+}(\rho) \cap \{ (f-c)_+ > \lambda \} \rvert \dla \\
&\leq A q  \int_0^\infty \lambda^{q-1} e^{-B \lambda/\norm{f}_{\PBMO_{\gamma,0,s}^{+}(R)} } \dla \\
&= A q \lp \frac{\norm{f}_{\PBMO_{\gamma,0,s}^{+}(R)}}{B} \rp^{q-1} \frac{\norm{f}_{\PBMO_{\gamma,0,s}^{+}(R)}}{B} \int_0^\infty s^{q-1} e^{-s} \ds \\
&= \frac{Aq}{B^q} \Gamma(q) \norm{f}_{\PBMO_{\gamma,0,s}^{+}(R)}^q , 
\end{align*}
where we made a change of variables $s = B\lambda/\norm{f}_{\PBMO_{\gamma,0,s}^{+}(R)}$.
Similarly, we obtain
\[
\dashint_{R^-(\rho)} (f-c)_-^q \leq \frac{Aq}{B^q}  \Gamma(q) \norm{f}_{\PBMO_{\gamma,0,s}^{+}(R)}^q .
\]
By summing the two estimates above and taking supremum over all parabolic rectangles $R \subset \Omega_T$, we get
\[
\norm{f}_{\PBMO_{\rho,q}^{+}(\Omega_T)} \leq c_2 \norm{f}_{\PBMO_{\gamma,0,s}^{+}(\Omega_T)}
\quad\text{with}\quad
c_2 = \lp 2 \frac{Aq}{B^q}  \Gamma(q) \rp^\frac{1}{q}
\]
for $0 < \rho \leq \gamma$.
Applying Corollary~\ref{PBMOequivalent}, we obtain the claim for the whole range $0 < \rho  < 1$.
\end{proof}

\end{document}